\documentclass[12pt]{article}
\usepackage{amssymb, amsmath, latexsym, a4}
\usepackage[latin1]{inputenc}
\usepackage{graphicx}
\usepackage[all]{xy}
\usepackage[usenames]{color}

\newenvironment{proof}[1][Proof]{\noindent\textbf{#1.} }{\ \rule{0.5em}{0.5em}}

\newtheorem{De}{Definition}[section]
\newtheorem{Th}[De]{Theorem}
\newtheorem{Pro}[De]{Proposition}
\newtheorem{Le}[De]{Lemma}
\newtheorem{Co}[De]{Corollary}
\newtheorem{Rem}[De]{Remark}
\newtheorem{Ex}[De]{Example}

\newcommand{\Hom}{{\sf Hom}}
\newcommand{\Image}{{\sf Im}}
\newcommand{\Ker}{{\sf Ker}}
\newcommand{\Der}{{\sf Der}}

\newcommand{\id}{{\sf Id}}
\newcommand{\Lie}{\ensuremath{\mathsf{Lie}}}

\newcommand{\Lieh}{\ensuremath{\mathfrak{h}}}
\newcommand{\Lieg}{\ensuremath{\mathfrak{g}}}

\newcommand{\Lieq}{\ensuremath{\mathfrak{q}}}

\newcommand{\Liem}{\ensuremath{\mathfrak{m}}}

\newcommand{\LieL}{\ensuremath{\mathcal{L}}}

\newcommand{\Lien}{\ensuremath{\mathfrak{n}}}

\newcommand{\Leib}{\ensuremath{\mathsf{Leib}}}

\newcommand{\ze}{{\cal Z}}

\hyphenation{cat-e-go-ri-cal e-quiv-a-len-ces co-ker-nels gen-er-al-ised ex-act-ness ex-ten-sion Ja-ne-lid-ze pro-jec-tive nil-po-tent co-ker-nel}

\newbox\pullbackbox
\setbox\pullbackbox=\hbox{\xy 0;<1mm,0mm>: \POS(4,0)\ar@{-} (0,0) \ar@{-} (4,4)
\endxy}

\newdir{>>}{{}*!/3.5pt/:(1,-.2)@^{>}*!/3.5pt/:(1,+.2)@_{>}*!/7pt/:(1,-.2)@^{>}*!/7pt/:(1,+.2)@_{>}}
\newdir{ >>}{{}*!/8pt/@{|}*!/3.5pt/:(1,-.2)@^{>}*!/3.5pt/:(1,+.2)@_{>}}
\newdir{ |>}{{}*!/-3.5pt/@{|}*!/-8pt/:(1,-.2)@^{>}*!/-8pt/:(1,+.2)@_{>}}
\newdir{ >}{{}*!/-8pt/@{>}}
\newdir{>}{{}*:(1,-.2)@^{>}*:(1,+.2)@_{>}}
\newdir{<}{{}*:(1,+.2)@^{<}*:(1,-.2)@_{<}}

  \newcommand{\eh}{\frak h}

\begin{document}

\centerline{\bf  {\Lie}-central derivations, {\Lie}-centroids and {\Lie}-stem Leibniz algebras}

\bigskip
\centerline{\bf G. R. Biyogmam$^1$, J. M. Casas$^2$ and N. Pacheco Rego$^3$}

\bigskip
\centerline{$^1$Department of Mathematics, Georgia College \& State University}
\centerline{Campus Box 17 Milledgeville, GA 31061-0490}
\centerline{ {E-mail address}: guy.biyogmam@gcsu.edu}
\bigskip

\centerline{$^2$Dpto. Matemática Aplicada I, Universidade de Vigo,  E. E. Forestal}
\centerline{Campus Universitario A Xunqueira, 36005 Pontevedra, Spain}
\centerline{ {E-mail address}: jmcasas@uvigo.es}
\bigskip

\centerline{$^{3}$IPCA, Dpto. de Ciências, Campus do IPCA,
 Lugar do Aldão}
\centerline{4750-810 Vila Frescainha, S. Martinho, Barcelos,
 Portugal}
\centerline{E-mail address: \tt natarego@gmail.com}

\bigskip

\date{}

\bigskip \bigskip \bigskip

{\bf Abstract:}
In this paper, we introduce the notion  \Lie-derivation. This concept generalizes  derivations for non-$\Lie$ Leibniz algebras. We study these $\Lie$-derivations in the case where their image is contained in the $\Lie$-center, call them $\Lie$-central derivations.  We provide a characterization of \Lie-stem Leibniz algebras by their \Lie-central derivations, and prove several properties of the Lie algebra of {\Lie}-central derivations for {\Lie}-nilpotent Leibniz algebras of class 2. We also introduce  ${\sf ID}_*$-${\Lie}$-derivations.  A ${\sf ID}_*$-${\Lie}$-derivation of a Leibniz algebra $\Lieg$ is a $\Lie$-derivation of $\Lieg$ in which the image is contained in the second term of the lower {\Lie}-central series of $\Lieg$, and that vanishes on {\Lie}-central elements. We  provide an upperbound for the dimension of the Lie algebra ${\sf ID}_*^{\Lie}(\Lieg)$ of ${\sf ID}_*$-${\Lie}$-derivation of $\Lieg$,  and prove that the sets ${\sf ID}_*^{\Lie}(\Lieg)$ and ${\sf ID}_*^{\Lie}(\Lieq)$ are isomorphic for any two \Lie-isoclinic Leibniz algebras $\Lieg$ and $\Lieq.$
\bigskip

{\bf 2010 MSC:} 17A32; 17A36; 17B40
\bigskip

{\bf Key words:} {\Lie}-derivation; {\Lie}-center; {\Lie}-stem Leibniz algebra; {\Lie}-central derivation; {\Lie}-centroid; almost inner {\Lie}-derivation.


\section{Introduction}
Studies such as the work of Dixmier \cite{Dix}, Leger \cite{Leg} and T\^{o}g\^{o} \cite{Tog, Tog1, Tog2, Tog3} about the structure of  a  Lie algebra $\LieL$  and its relationship with the properties of the Lie algebra of derivations of  $\LieL$ have been conducted by several authors.
 A classical problem concerning the algebra of derivations consists to determine necessary and sufficient conditions under which subalgebras of the algebra of derivations coincide. For example, the coincidence of the subalgebra of central derivations with the algebra of derivations  of a Lie algebra was studied in \cite{Tog1}. Also centroids play important roles in the study of extended affine Lie algebras \cite{BN}, in the investigations of the Brauer groups and division algebras, in the classification of algebras or in the structure theory of algebras.  Almost inner derivations arise in many contexts of algebra, number theory or geometry, for instance they play an important role in the study of isospectral deformations of compact solvmanifolds \cite{GW}; the paper \cite{BDV} is dedicated to study almost inner derivations of Lie algebras.

Our aim in this paper is to conduct an analogue study  by investigating various concepts of derivations on Leibniz algebras. Our study relies on the relative notions of these derivations; derivations relative to the  \emph{Liezation functor} $(-)_{\Lie} : {\Leib} \to {\Lie}$, which assigns to a Leibniz algebra $\Lieg$ the Lie algebra ${\Lieg}_{_{\rm Lie}}$, where {\Leib} denotes the category of Leibniz algebras and {\Lie} denotes the category of Lie algebras.

The approached properties are closely related to the relative notions of central extension in a semi-abelian category with respect to a Birkhoff subcategory \cite{CVDL1, EVDL}. A recent research line  deals with the development of absolute properties of Leibniz algebras (absolute are the usual properties and it  means relative to the abelianization functor)  in the  relative setting (with respect to the Liezation functor); in general, absolute properties have the corresponding relative ones, but not all absolute properties immediately hold in the relative case, so new requirements are needed as it can be seen in the following papers \cite{BC, BC1, BC2, CI, CKh, RC}.

In order to develop a systematic study of derivation in the relative setting,
we organize the paper as follows: in Section \ref{preliminaries}, we provide some background on relative notions with respect to the Liezation functor.  We define the sets of \Lie-derivations $\Der^{\Lie}(\Lieg)$ and central \Lie-derivations $\Der^{\Lie}_z(\Lieg)$ for a non-Lie Leibniz algebra $\Lieg.$ It is worth mentioning that the absolute derivations are also {\Lie}-derivations. In Section \ref{stem}, we characterize {\Lie}-stem Leibniz algebras using  their {\Lie}-central derivations. Using {\Lie}-isoclinism, we prove several results on the Lie algebra of {\Lie}-central derivations of {\Lie}-nilpotent Leibniz algebras of class two.  In concrete, we prove that $\Der^{\Lie}_z(\Lieg)$ is abelian if and only if $Z_{\Lie}(\Lieg) = \gamma_2^{\Lie}(\Lieg)$, under the assumption that {\Lieg} is a finite-dimensional {\Lie}-nilpotent Leibniz algebra of class 2.  In Section \ref{centroids}, we define the \Lie-centroid  $\Gamma^{\Lie}(\Lieg)$ of $\Lieg$ and prove several of its basic properties. In particular we study its relationship with the \Lie-algebra $\Der^{\Lie}_z(\Lieg).$ In Section \ref{almost}, we study the set ${\sf ID}_*(\Lieg)$ of  ${\sf ID}_*$-${\Lie}$-derivations of a Leibniz algebra $\Lieg$ and its subalgebra   $\Der^{\Lie}_c(\Lieg)$ of almost inner {\Lie}-derivations of $\Lieg.$
Similarly to the result of T\^{o}g\^{o} \cite{Tog2} on derivations of Lie algebras, we provide necessary and sufficient conditions  on a finite dimensional Leibniz algebra $\Lieg$ for the subalgebras $\Der^{\Lie}_z(\Lieg)$ and ${\sf ID}_*(\Lieg)$ to be equal. We also prove that if two Leibniz algebras are \Lie-isoclinic, then their sets of ${\sf ID}_*$-${\Lie}$-derivations are isomorphic. This isomorphism also holds for their sets of  almost inner {\Lie}-derivations. We establish several results  on almost inner {\Lie}-derivations, similar  to the Lie algebra case \cite{BDV}. Finally, we provide an upperbound of the dimension of ${\sf ID}_*(\Lieg)$ by means of the dimension of $[\Lieg,\Lieg]_{\Lie}.$


\section{Preliminaries on Leibniz algebras} \label{preliminaries}

Let $\mathbb{K}$ be a fix ground field such that $\frac{1}{2} \in \mathbb{K}$. Throughout the paper, all vector spaces and tensor products are considered over $\mathbb{K}$.

A \emph{Leibniz algebra} \cite{Lo 1, Lo 2} is a vector space ${\Lieg}$  equipped with a bilinear map $[-,-] : \Lieg \otimes \Lieg \to \Lieg$, usually called the \emph{Leibniz bracket} of ${\Lieg}$,  satisfying the \emph{Leibniz identity}:
\[
 [x,[y,z]]= [[x,y],z]-[[x,z],y], \quad x, y, z \in \Lieg.
\]

 A subalgebra ${\eh}$ of a Leibniz algebra ${\Lieg}$ is said to be \emph{left (resp. right) ideal} of ${\Lieg}$ if $ [h,g]\in {\eh}$  (resp.  $ [g,h]\in {\eh}$), for all $h \in {\eh}$, $g \in {\Lieg}$. If ${\eh}$ is both
left and right ideal, then ${\eh}$ is called \emph{two-sided ideal} of ${\Lieg}$. In this case $\Lieg/\Lieh$ naturally inherits a Leibniz algebra structure.

Given a Leibniz algebra ${\Lieg}$, we denote by ${\Lieg}^{\rm ann}$ the subspace of ${\Lieg}$ spanned by all elements of the form $[x,x]$, $x \in \Lieg$. It is clear that the quotient ${\Lieg}_ {_{\rm Lie}}=\Lieg/{\Lieg}^{\rm ann}$ is a Lie algebra. This defines the so-called  \emph{Liezation functor} $(-)_{\Lie} : {\Leib} \to {\Lie}$, which assigns to a Leibniz algebra $\Lieg$ the Lie algebra ${\Lieg}_{_{\rm Lie}}$. Moreover, the canonical epimorphism  ${\Lieg} \twoheadrightarrow {\Lieg}_ {_{\rm Lie}}$ is universal among all homomorphisms from $\Lieg$ to a Lie algebra, implying that the Liezation functor is left adjoint to the inclusion functor $ {\Lie} \hookrightarrow {\Leib}$.

Given a Leibniz algebra $\Lieg,$ we define the bracket $$[-,-]_{lie}:\Lieg\to \Lieg, ~~ \text{by} ~~ [x,y]_{lie}=[x,y]+[y,x], ~~ \text{for} ~~ x,y\in\Lieg.$$

Let  ${\Liem}$, ${\Lien}$ be two-sided ideals of a Leibniz algebra  ${\Lieg}$. The following notions come from \cite{CKh}, which were derived from \cite{CVDL1}.

 The \emph{$\Lie$-commutator} of  ${\Liem}$ and  ${\Lien}$ is the two-sided ideal  of $\Lieg$
\[
[\Liem,\Lien]_{\Lie}= \langle \{[m,n]_{lie}, m \in \Liem, n \in \Lien \}\rangle.
\]

The \emph{$\Lie$-center} of the Leibniz algebra $\Lieg$ is the two-sided ideal
\[
Z_{\Lie}(\Lieg) =  \{ z\in \Lieg\,|\,\text{$[g,z]_{lie}=0$ for all $g\in \Lieg$}\}.
\]

 The \Lie-{\it centralizer} of ${\Liem}$ and ${\Lien}$ over  ${\Lieg}$ is
\[
C_{\Lieg}^{\Lie}({\Liem} , {\Lien}) = \{g \in {\Lieg} \mid  \; [g, m]_{lie} \in {\Lien}, \; \text{for all} \;
m \in {\Liem} \} \; .
\]

Obviously, $C_{\Lieg}^{\Lie}(\Lieg,0)=Z_{\Lie}(\Lieg)$.

 The right-center of a Leibniz algebra $\Lieg$ is the two-sided ideal $Z^r(\Lieg) = \{ x \in \Lieg\mid [y,x]=0 ~\text{for all}~y \in \Lieg \}$. The left-center of a Leibniz algebra $\Lieg$ is the set $Z^l(\Lieg) = \{ x \in \Lieg\mid [x,y]=0 ~\text{for all}~y \in \Lieg \}$, which might not  even be a subalgebra. $Z(\Lieg) = Z^l(\Lieg) \cap Z^r(\Lieg)$ is called the center of $\Lieg$, which is a two-sided ideal of $\Lieg$.

\begin{De} \cite{CKh}
Let ${\Lien}$ be a two-sided ideal of a Leibniz algebra $\Lieg$. The lower $\Lie$-central series of $\Lieg$ relative to ${\Lien}$ is the sequence
\[
\cdots \trianglelefteq {\gamma_i^{\Lie}(\Lieg,\Lien)} \trianglelefteq \cdots \trianglelefteq \gamma_2^{\Lie}(\Lieg,\Lien)  \trianglelefteq {\gamma_1^{\Lie}(\Lieg,\Lien)}
\]
of two-sided ideals of $\Lieg$ defined inductively by
\[
{\gamma_1^{\Lie}(\Lieg,\Lien)} = {\Lien} \quad \text{and} \quad \gamma_i^{\Lie}(\Lieg,\Lien) =[{\gamma_{i-1}^{\Lie}(\Lieg,\Lien)},{\Lieg}]_{\Lie}, \quad   i \geq 2.
\]
\end{De}

We use the notation $\gamma_i^{\Lie}(\Lieg)$ instead of $\gamma_i^{\Lie}(\Lieg,\Lieg), 1 \leq i \leq n$.

If $\varphi : \Lieg \to \Lieq$ is a homomorphism of Leibniz  such that $\varphi({\Liem}) \subseteq {\Lien}$, where ${\Liem}$ is a two-sided ideal of ${\Lieg}$ and ${\Lien}$ a two-sided ideal of ${\Lieq}$, then $\varphi(\gamma_{i}^{\Lie}({\Lieg}, {\Liem})) \subseteq \gamma_{i}^{\Lie}({\Lieq}, {\Lien}), i \geq 1$.

\begin{De}
The Leibniz algebra $\Lieg$ is said to be $\Lie$-nilpotent relative to $\Lien$ of class $c$ if\ $\gamma_{c+1}^{\Lie}(\Lieg, \Lien) = 0$ and $\gamma_c^{\Lie}(\Lieg, \Lien) \neq 0$.
\end{De}

\begin{De} \cite{CKh}
The upper $\Lie$-central series of a Leibniz algebra ${\Lieg}$ is the sequence of two-sided ideals, called $i$-{\Lie} centers, $i = 0, 1, 2,  \dots,$
 \[
{\ze}_0^{\Lie}({\Lieg}) \trianglelefteq {\ze}_1^{\Lie}({\Lieg}) \trianglelefteq \cdots \trianglelefteq {\ze}_i^{\Lie}({\Lieg}) \trianglelefteq \cdots
\]
 defined inductively by
\[
{\ze}_0^{\Lie}({\Lieg}) = 0 \quad \text{and} \quad
 {\ze}_{i}^{\Lie}({\Lieg}) = C_{\Lieg}^{\Lie}({\Lieg},{\ze}_{i-1}^{\Lie}({\Lieg})) , \  i \geq 1 .
 \]
\end{De}

\begin{Th} \cite[Theorem 4]{CKh} \label{2.4}
A Leibniz algebra {\Lieg} is  $\Lie$-nilpotent of class $c$ if and only if $Z_c^{\Lie}(\Lieg) ={\Lieg}$ and $Z_{c-1}^{\Lie}(\Lieg) \neq {\Lieg}$.
\end{Th}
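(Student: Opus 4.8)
The plan is to reduce the theorem to the single equivalence
\[
\gamma_{i+1}^{\Lie}(\Lieg) = 0 \quad \Longleftrightarrow \quad Z_i^{\Lie}(\Lieg) = \Lieg, \qquad i \geq 0 ,
\]
since applying it with $i = c$ gives $\gamma_{c+1}^{\Lie}(\Lieg) = 0 \Leftrightarrow Z_c^{\Lie}(\Lieg) = \Lieg$, and applying it with $i = c-1$ gives $\gamma_c^{\Lie}(\Lieg) \neq 0 \Leftrightarrow Z_{c-1}^{\Lie}(\Lieg) \neq \Lieg$; together these are exactly the two halves of the statement. Thus the whole content lies in this equivalence, which is the $\Lie$-relative version of the classical fact that the lower and the upper central series of a nilpotent object reach the top, respectively the bottom, after the same number of steps.

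For the implication $Z_c^{\Lie}(\Lieg) = \Lieg \Rightarrow \gamma_{c+1}^{\Lie}(\Lieg) = 0$, I would prove by induction on $j$, for $0 \leq j \leq c$, that $\gamma_{j+1}^{\Lie}(\Lieg) \subseteq Z_{c-j}^{\Lie}(\Lieg)$. The case $j = 0$ is the hypothesis $\gamma_1^{\Lie}(\Lieg) = \Lieg = Z_c^{\Lie}(\Lieg)$. Assuming the inclusion for $j$, note that every generator $[x,g]_{lie}$ of $\gamma_{j+2}^{\Lie}(\Lieg) = [\gamma_{j+1}^{\Lie}(\Lieg), \Lieg]_{\Lie}$ has $x \in Z_{c-j}^{\Lie}(\Lieg) = C_{\Lieg}^{\Lie}(\Lieg, Z_{c-j-1}^{\Lie}(\Lieg))$, hence $[x,g]_{lie} \in Z_{c-j-1}^{\Lie}(\Lieg)$; since $Z_{c-j-1}^{\Lie}(\Lieg)$ is a two-sided ideal of $\Lieg$ containing all these generators, it contains the two-sided ideal they generate, i.e. $\gamma_{j+2}^{\Lie}(\Lieg) \subseteq Z_{c-j-1}^{\Lie}(\Lieg)$. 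Taking $j = c$ yields $\gamma_{c+1}^{\Lie}(\Lieg) \subseteq Z_0^{\Lie}(\Lieg) = 0$.

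For the converse $\gamma_{c+1}^{\Lie}(\Lieg) = 0 \Rightarrow Z_c^{\Lie}(\Lieg) = \Lieg$, I would dually prove by induction on $j$, for $0 \leq j \leq c$, that $\gamma_{c-j+1}^{\Lie}(\Lieg) \subseteq Z_j^{\Lie}(\Lieg)$, the case $j = 0$ being $\gamma_{c+1}^{\Lie}(\Lieg) = 0 = Z_0^{\Lie}(\Lieg)$. If $\gamma_{c-j+1}^{\Lie}(\Lieg) \subseteq Z_j^{\Lie}(\Lieg)$, then for any $x \in \gamma_{c-j}^{\Lie}(\Lieg)$ and $g \in \Lieg$ we get $[x,g]_{lie} \in [\gamma_{c-j}^{\Lie}(\Lieg), \Lieg]_{\Lie} = \gamma_{c-j+1}^{\Lie}(\Lieg) \subseteq Z_j^{\Lie}(\Lieg)$, so $x \in C_{\Lieg}^{\Lie}(\Lieg, Z_j^{\Lie}(\Lieg)) = Z_{j+1}^{\Lie}(\Lieg)$; that is, $\gamma_{c-j}^{\Lie}(\Lieg) \subseteq Z_{j+1}^{\Lie}(\Lieg)$. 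Taking $j = c$ gives $\Lieg = \gamma_1^{\Lie}(\Lieg) \subseteq Z_c^{\Lie}(\Lieg)$, hence $Z_c^{\Lie}(\Lieg) = \Lieg$. I expect no genuine obstacle here; the only step deserving attention is the passage from generators to the generated ideal in both inductions, namely that knowing each bracket $[x,g]_{lie}$ lies in the two-sided ideal $Z_k^{\Lie}(\Lieg)$ already forces the $\Lie$-commutator ideal generated by those brackets into $Z_k^{\Lie}(\Lieg)$. This is immediate from the definition of $[-,-]_{\Lie}$ as the two-sided ideal generated by such elements, together with the fact recalled in Section~\ref{preliminaries} that each term of the upper $\Lie$-central series is a two-sided ideal of $\Lieg$.
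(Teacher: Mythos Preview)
Your argument is correct: the two inductions establishing $\gamma_{j+1}^{\Lie}(\Lieg) \subseteq Z_{c-j}^{\Lie}(\Lieg)$ and $\gamma_{c-j+1}^{\Lie}(\Lieg) \subseteq Z_j^{\Lie}(\Lieg)$ are the standard way to show that the lower and upper $\Lie$-central series terminate after the same number of steps, and the passage from generators $[x,g]_{lie}$ to the generated two-sided ideal is justified exactly as you note, since each $Z_k^{\Lie}(\Lieg)$ is a two-sided ideal. Reducing the theorem to the single equivalence $\gamma_{i+1}^{\Lie}(\Lieg)=0 \Leftrightarrow Z_i^{\Lie}(\Lieg)=\Lieg$ and then reading off both the $i=c$ and the $i=c-1$ cases is clean and complete.

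As for comparison with the paper: there is nothing to compare against. Theorem~\ref{2.4} is stated in the preliminaries with the citation \cite[Theorem~4]{CKh} and is not proved in this paper; it is simply imported as background. Your write-up therefore supplies a self-contained proof where the paper offers none, and it is exactly the classical lower/upper-central-series argument adapted verbatim to the $\Lie$-relative setting. If anything, you could streamline slightly by proving once and for all the single chain of inclusions $\gamma_{j+1}^{\Lie}(\Lieg)\subseteq Z_{c-j}^{\Lie}(\Lieg)$ under either hypothesis, but what you have is already perfectly adequate.
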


\begin{De} \cite[Definition 2.8]{CI}
Let $\Liem$ be a subset of a Leibniz algebra $\Lieg$. The \Lie-normalizer of $\Liem$ is the subset of $\Lieg$:
$$N_{\Lieg}(\Liem) = \{ g \in \Lieg \mid [g,m], [m,g] \in \Liem, \text{\rm for all}\ m \in \Liem \}$$
\end{De}

\begin{De} \cite[Proposition 1]{CKh}
An exact sequence of Leibniz algebras $0 \to \Lien \to \Lieg \stackrel{\pi} \to \Lieq \to 0$ is said to be \emph{$\Lie$-central} extension if $[{\Lieg}, {\Lien}]_{\Lie}=0$, equivalently
$\Lien \subseteq Z_{\Lie}(\Lieg)$.
\end{De}

\begin{De} A linear map $d : \Lieg \rightarrow \Lieg$ of a Leibniz algebra $(\Lieg,[-,-])$ is said to be a {\Lie}-derivation if for all $x, y \in \Lieg$, the following  condition holds: $$d([x,y]_{lie})=[d(x),y]_{lie} + [x, d(y)]_{lie}$$
\end{De}

We denote by $\Der^{\Lie}(\Lieg)$ the set of all {\Lie}-derivations of a Leibniz algebra $\Lieg$, which can be equipped with a structure of Lie algebra by means of the usual bracket $[d_1, d_2] = d_1 \circ d_2 - d_2 \circ d_1$, for all $d_1, d_2 \in \Der(\Lieg)$.


\begin{Ex}
The absolute  derivations, that is  linear maps $d \colon \Lieg \to \Lieg$ such that $d([x,y])= [d(x),y]+[x,d(y)]$, are also {\Lie}-derivations, since:
\begin{equation} \label{eq1}
d([x,y]_{lie}) = d([x,y]+[y,x]) =[d(x), y]_{lie} + [x, d(y)]_{lie}, \quad \text{for all}~x, y \in \Lieg.
\end{equation}
In particular, for a fixed $x \in {\Lieg}$, the inner derivation $R_x \colon \Lieg \to \Lieg, R_x(y) = [y, x]$, for all $y \in {\Lieg}$, is a {\Lie}-derivation, so it gives rise to the following identity:
$$[[y,z]_{lie},x] = [[y,x],z]_{lie} + [y,[z,x]]_{lie}, \quad \text{for all}~x, y \in \Lieg.$$

However there are {\Lie}-derivations which are not derivations. For instance, every linear map $d \colon {\Lieg} \to {\Lieg}$ is a {\Lie}-derivation for any Lie algebra {\Lieg}, but it is not a derivation in general.
\end{Ex}

\begin{De} \label{Lie central der}
A {\Lie}-derivation $d : \Lieg \to \Lieg$ of a Leibniz algebra $\Lieg$ is said to be $\Lie$-central derivation if its image is contained in the $\Lie$-center of $\Lieg$.
\end{De}

\begin{Rem}
The absolute notion corresponding to Definition \ref{Lie central der} is the so called central derivations, that is derivations $d : \Lieg \to \Lieg$ such that its image is contained in the center of $\Lieg$. Obviously, every central derivation is a $\Lie$-central derivation. However the converse is not true as the following example shows:
let $\Lieg$ be the two-dimensional Leibniz algebra with basis $\{e, f\}$ and bracket operation given by $[e,f] = - [f, e] = e$ \cite{Cu}.  The inner derivation $R_e$ is a $\Lie$-central derivation, but it is not central in general.
\end{Rem}

We denote the set of all $\Lie$-central derivations of a Leibniz algebra $\Lieg$ by $\Der_z^{\Lie}(\Lieg)$. Obviously  $\Der_z^{\Lie}(\Lieg)$ is a subalgebra of $\Der^{\Lie}(\Lieg)$ and every element of $\Der_z^{\Lie}(\Lieg)$ annihilates $\gamma_2^{\Lie}(\Lieg) = [\Lieg, \Lieg]_{\Lie}$.   $\Der_z^{\Lie}(\Lieg)= C_{\Der^{\Lie}(\Lieg)}(({\sf R+L})(\Lieg) )$, where ${\sf L}(\Lieg)=\{ L_x \ | \ x\in \Lieg \}$, $L_x$ denotes the left multiplication operator $L_x(y) = [x,y]$, ${\sf R}(\Lieg)=\{ R_x \ | \ x\in \Lieg \}$ and $C_{\Lieg} (\Liem) = \{ x \in \Lieg \mid [x, y] = 0 = [y, x], \text{for all}~ y \in \Liem \}$, the absolute centralizer of an ideal $\Liem$ over the Leibniz algebra $\Lieg$.

Let A and B be two Leibniz algebras  and $T(A,B)$ denotes the set of all linear transformations from $A$ to $B$. Clearly, $T(A,B)$ endowed with the bracket $[f,g](x)=[f(x),g(x)]$ is an abelian Leibniz algebra if $B$ is an abelian Leibniz algebra too.

Consider the $\Lie$-central extensions $(g) : 0 \to \Lien \stackrel{\chi} \to \Lieg \stackrel{\pi} \to \Lieq \to 0$ and $(g_i) : 0 \to \Lien_i \stackrel{\chi_i}\to \Lieg_i \stackrel{\pi_i} \to \Lieq_i \to 0, i=1, 2.$ 

Let be $C : \Lieq \times \Lieq \to [\Lieg, \Lieg]_{\Lie}$ given by $C(q_1,q_2)=[g_1,g_2]_{lie}$, where $\pi(g_j)=q_j, j= 1, 2$, the $\Lie$-commutator map associated to the extension $(g)$. In a similar way are defined the $\Lie$-commutator maps $C_i$ corresponding to the extensions $(g_i), i = 1, 2$.

Note that if $\Lieq$ is a Lie algebra, then $\pi([\Lieg, \Lieg]_{\Lie})=0$, hence $[\Lieg, \Lieg]_{\Lie} \subseteq \Lien \equiv \chi(\Lien)$.

\begin{De} \label{isoclinic} \cite[Definition 3.1]{BC}
The $\Lie$-central extensions $(g_1)$ and $(g_2)$ are said to be \Lie-isoclinic when there exist isomorphisms $\eta : \Lieq_1 \to \Lieq_2$ and $\xi : [\Lieg_1, \Lieg_1]_{\Lie} \to [\Lieg_2, \Lieg_2]_{\Lie}$ such that the following diagram is commutative:
\begin{equation}  \label{square isoclinic}
\xymatrix{
\Lieq_1 \times \Lieq_1 \ar[r]^{C_1} \ar[d]_{\eta \times \eta} & [\Lieg_1, \Lieg_1]_{\Lie} \ar[d]^{\xi}\\
\Lieq_2 \times \Lieq_2 \ar[r]^{C_2} & [\Lieg_2, \Lieg_2]_{\Lie}
}
\end{equation}

The pair $(\eta, \xi)$ is called a \Lie-isoclinism from $(g_1)$ to $(g_2)$ and will be denoted by $(\eta, \xi) : (g_1) \to (g_2)$.
\end{De}

Let $\Lieg$ be a Leibniz algebra, then we can construct the following $\Lie$-central extension
\begin{equation} \label{Lie central extension}
(e_g) : 0 \to Z_{\Lie}(\Lieg) \to \Lieg \stackrel{pr_{\Lieg}} \to \Lieg/Z_{\Lie}(\Lieg) \to 0.
\end{equation}

\begin{De} \cite[Definition 3.3]{BC}
Let $\Lieg$ and $\Lieq$ be Leibniz algebras. Then $\Lieg$ and $\Lieq$ are said to be $\Lie$-isoclinic when $(e_g)$ and $(e_q)$ are $\Lie$-isoclinic $\Lie$-central extensions.

A $\Lie$-isoclinism $(\eta, \xi)$ from $(e_g)$ to $(e_q)$ is also called a $\Lie$-isoclinism  from $\Lieg$ to $\Lieq$, denoted by $(\eta, \xi) : \Lieg \sim \Lieq$.
\end{De}

\begin{Pro} \label{Lie-isoclinism} \cite[Proposition 3.4]{BC}
For a $\Lie$-isoclinism $(\eta, \xi) : (g_1) \sim (g_2)$, the following statements hold:
\begin{enumerate}
\item[a)] $\eta$ induces an isomorphism $\eta' : \Lieg_1/Z_{\Lie}(\Lieg_1) \to \Lieg_2/Z_{\Lie}(\Lieg_2)$, and $(\eta', \xi)$ is a $\Lie$-isoclinism from $\Lieg_1$ to $\Lieg_2$.
    \item[b)] $\chi_1(\Lien_1) = Z_{\Lie}(\Lieg_1)$ if and only if $\chi_2(\Lien_2) = Z_{\Lie}(\Lieg_2)$.
\end{enumerate}
\end{Pro}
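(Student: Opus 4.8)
\medskip
\noindent\textbf{Sketch of proof.}
The plan is to recognise, inside each quotient $\Lieq_i\cong\Lieg_i/\chi_i(\Lien_i)$, the ideal $Z_{\Lie}(\Lieg_i)/\chi_i(\Lien_i)$ as the \emph{radical} of the $\Lie$-commutator map $C_i$,
\[
R_i:=\{\,q\in\Lieq_i\mid C_i(q,q')=0\ \text{for all}\ q'\in\Lieq_i\,\},
\]
and then to transport this description across the isoclinism square \eqref{square isoclinic}. First I would check that $C_i$ is well defined: if $\pi_i(g)=\pi_i(\tilde g)$ then $g-\tilde g\in\chi_i(\Lien_i)\subseteq Z_{\Lie}(\Lieg_i)$, the inclusion holding because $(g_i)$ is $\Lie$-central, so $[g,g']_{lie}=[\tilde g,g']_{lie}$ for all $g'$ (and the second slot is handled symmetrically). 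It then follows that $q\in R_i$ precisely when one, equivalently every, lift $g$ of $q$ lies in $Z_{\Lie}(\Lieg_i)$; since $\Ker\pi_i=\chi_i(\Lien_i)\subseteq Z_{\Lie}(\Lieg_i)$, this yields $R_i=\pi_i\big(Z_{\Lie}(\Lieg_i)\big)\cong Z_{\Lie}(\Lieg_i)/\chi_i(\Lien_i)$, a two-sided ideal of $\Lieq_i$.

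For part a), commutativity of \eqref{square isoclinic} says $\xi\circ C_1=C_2\circ(\eta\times\eta)$, and since $\xi$ is an isomorphism and $\eta$ is surjective, $q\in R_1$ iff $\eta(q)\in R_2$; hence $\eta(R_1)=R_2$ and $\eta$ descends to an isomorphism $\Lieq_1/R_1\to\Lieq_2/R_2$. By the third isomorphism theorem $\Lieq_i/R_i\cong\big(\Lieg_i/\chi_i(\Lien_i)\big)\big/\big(Z_{\Lie}(\Lieg_i)/\chi_i(\Lien_i)\big)\cong\Lieg_i/Z_{\Lie}(\Lieg_i)$, so we obtain the desired isomorphism $\eta':\Lieg_1/Z_{\Lie}(\Lieg_1)\to\Lieg_2/Z_{\Lie}(\Lieg_2)$. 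To see that $(\eta',\xi)$ is a $\Lie$-isoclinism from $(e_{g_1})$ to $(e_{g_2})$, let $\widetilde{C}_i$ be the $\Lie$-commutator map of $(e_{g_i})$; then $\widetilde{C}_i(\overline g,\overline{g'})=[g,g']_{lie}=C_i(\pi_i g,\pi_i g')$ and, by construction, $\eta'$ carries the class of $g$ modulo $Z_{\Lie}(\Lieg_1)$ to the class of $\eta(\pi_1 g)$ modulo $R_2$. Substituting into the commutativity relation gives
\[
\widetilde{C}_2\big(\eta'(\overline g),\eta'(\overline{g'})\big)=C_2\big(\eta(\pi_1 g),\eta(\pi_1 g')\big)=\xi\,C_1(\pi_1 g,\pi_1 g')=\xi\,\widetilde{C}_1(\overline g,\overline{g'}),
\]
which is exactly the square of Definition \ref{isoclinic} for $(e_{g_1})$ and $(e_{g_2})$.

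For part b), the identification $R_i\cong Z_{\Lie}(\Lieg_i)/\chi_i(\Lien_i)$ shows that $\chi_i(\Lien_i)=Z_{\Lie}(\Lieg_i)$ is equivalent to $R_i=0$; since $\eta$ is an isomorphism with $\eta(R_1)=R_2$, we have $R_1=0$ iff $R_2=0$, whence the claimed equivalence. The only delicate point in the whole argument is the bookkeeping: verifying that $C_i$ and $\widetilde{C}_i$ are well defined (this is where $\Lie$-centrality of the extensions is used) and that the identifications $\Lieq_i\cong\Lieg_i/\chi_i(\Lien_i)$ and $\Lieg_i/Z_{\Lie}(\Lieg_i)\cong\Lieq_i/R_i$ are compatible with $\eta$ and $\eta'$; once that is set up, everything reduces to a diagram chase with $\xi$ invertible, and I do not anticipate a real obstacle.
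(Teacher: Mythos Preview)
The paper does not actually supply a proof of this proposition: it is simply quoted from \cite[Proposition~3.4]{BC} and stated without argument. So there is no ``paper's own proof'' to compare against.

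That said, your sketch is mathematically sound and is essentially the standard way one proves such a statement. The key observation---that the radical $R_i=\{q\in\Lieq_i\mid C_i(q,q')=0\ \text{for all}\ q'\}$ coincides with $\pi_i(Z_{\Lie}(\Lieg_i))$---is correct because $[-,-]_{lie}$ is symmetric and $\pi_i$ is surjective, so $C_i(\pi_i g,\,\cdot\,)\equiv 0$ is equivalent to $[g,g']_{lie}=0$ for every $g'\in\Lieg_i$, i.e.\ $g\in Z_{\Lie}(\Lieg_i)$. The transport of $R_1$ to $R_2$ via the isoclinism square is then immediate from the invertibility of $\xi$ and the bijectivity of $\eta$, and your use of the third isomorphism theorem and the verification of the square for $(\eta',\xi)$ are routine. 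Part~b) drops out exactly as you say: $R_i=0$ iff $\chi_i(\Lien_i)=Z_{\Lie}(\Lieg_i)$, and $\eta(R_1)=R_2$ gives the equivalence. I see no genuine gap; the bookkeeping you flag is indeed the only place requiring care, and you have handled it correctly.
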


\begin{De} \cite[Definition 4]{RC}
A $\Lie$-stem Leibniz algebra is a Leibniz algebra $\Lieg$ such that $Z_{\Lie}(\Lieg) \subseteq [\Lieg,\Lieg]_{\Lie}$.
\end{De}
Theorems 1 and 2 in \cite{RC} prove that every $\Lie$-isoclinic family of Leibniz algebras contains at least one $\Lie$-stem Leibniz algebra, which is of minimal dimension if it has finite dimension.


\section{$\Lie$-stem Leibniz algebras and $\Lie$-central derivations} \label{stem}

\begin{Pro} \label{3.1}
If $\Lieg$ is a $\Lie$-stem Leibniz algebra, then $\Der_z^{\Lie}(\Lieg)$ is an abelian Lie algebra.
\end{Pro}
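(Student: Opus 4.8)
The plan is to take two arbitrary $\Lie$-central derivations $d_1, d_2 \in \Der_z^{\Lie}(\Lieg)$ and show that their Lie bracket $[d_1,d_2] = d_1 \circ d_2 - d_2 \circ d_1$ is the zero map. First I would recall the two facts noted just before the statement: every $d \in \Der_z^{\Lie}(\Lieg)$ has image inside $Z_{\Lie}(\Lieg)$, and every such $d$ annihilates $\gamma_2^{\Lie}(\Lieg) = [\Lieg,\Lieg]_{\Lie}$. So for any $x \in \Lieg$ we have $d_2(x) \in Z_{\Lie}(\Lieg)$, and I then want to evaluate $d_1(d_2(x))$.

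The key step is to exploit the hypothesis that $\Lieg$ is $\Lie$-stem, i.e. $Z_{\Lie}(\Lieg) \subseteq [\Lieg,\Lieg]_{\Lie}$. Combining this with $d_2(x) \in Z_{\Lie}(\Lieg)$ gives $d_2(x) \in [\Lieg,\Lieg]_{\Lie} = \gamma_2^{\Lie}(\Lieg)$. Since $d_1$ annihilates $\gamma_2^{\Lie}(\Lieg)$, it follows that $d_1(d_2(x)) = 0$. By the symmetric argument $d_2(d_1(x)) = 0$, and hence $[d_1,d_2](x) = d_1(d_2(x)) - d_2(d_1(x)) = 0$ for all $x \in \Lieg$. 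Therefore $[d_1,d_2] = 0$, and since $d_1, d_2$ were arbitrary, $\Der_z^{\Lie}(\Lieg)$ is abelian.

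I expect essentially no obstacle here: the only thing to be careful about is making sure the two ingredients (image in the $\Lie$-center, vanishing on $[\Lieg,\Lieg]_{\Lie}$) are properly justified. The vanishing on $\gamma_2^{\Lie}(\Lieg)$ follows because for $x,y \in \Lieg$, $d([x,y]_{lie}) = [d(x),y]_{lie} + [x,d(y)]_{lie}$, and both terms vanish since $d(x), d(y) \in Z_{\Lie}(\Lieg)$; this extends to all of $\gamma_2^{\Lie}(\Lieg)$ by linearity. With these in hand the chain $d_2(x) \in Z_{\Lie}(\Lieg) \subseteq [\Lieg,\Lieg]_{\Lie} \Rightarrow d_1(d_2(x)) = 0$ closes the argument immediately, so the proof is genuinely short.
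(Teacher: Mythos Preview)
Your proof is correct and follows essentially the same approach as the paper: both show that any $d \in \Der_z^{\Lie}(\Lieg)$ annihilates $[\Lieg,\Lieg]_{\Lie}$ (because $d(x), d(y) \in Z_{\Lie}(\Lieg)$ forces $d([x,y]_{lie}) = 0$), then use the $\Lie$-stem hypothesis $Z_{\Lie}(\Lieg) \subseteq [\Lieg,\Lieg]_{\Lie}$ to conclude $d_1(d_2(x)) = d_2(d_1(x)) = 0$ for all $x$.
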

\begin{proof}
Since  $\Der_z^{\Lie}(\Lieg)$ is a  subalgebra of  $\Der^{\Lie}(\Lieg),$ it is enough to show that $[d_1,d_2]=0$ for all $d_1, d_2\in  \Der_z^{\Lie}(\Lieg).$  First, we notice that if $d\in \Der_z^{\Lie}(\Lieg),$ then $d([x,y]_{lie})=0$  for all $x,y\in\Lieg$ since $d(x),d(y)\in Z_{\Lie}(\Lieg).$ So in particular, $d(Z_{\Lie}(\Lieg))=0$ since  $Z_{\Lie}(\Lieg) \subseteq [\Lieg,\Lieg]_{\Lie}$ as $\Lieg$ is a $\Lie$-stem Leibniz algebra. Now let $d_1, d_2 \in  \Der_z^{\Lie}(\Lieg)$ and  $x\in \Lieg.$ Then $d_1(x), d_2(x)\in Z_{\Lie}(\Lieg),$  which implies that  $[d_1, d_2](x)=d_1(d_2(x))-d_2(d_1(x))=0.$ Hence $[d_1, d_2]=0.$
\end{proof}
\medskip

The converse of the above result is not true in general. Indeed,  let $\Lieg$ be any  Lie algebra. Then $Z_{\Lie}(\Lieg)=\Lieg$ and so $\Der_z^{\Lie}(\Lieg)$ is an abelian Lie algebra. However $\Lieg$ is not a $\Lie$-stem Leibniz algebra since $Z_{\Lie}(\Lieg)=\Lieg\not\subseteq 0=[\Lieg,\Lieg]_{\Lie}.$

\begin{Pro} \label{3.2}
Let $\Lieg$ be a  $\Lie$-nilpotent finite dimensional Leibniz algebra such that $\gamma_2^{\Lie}(\Lieg) \neq 0$. Then $\Der_z^{\Lie}(\Lieg)$ is abelian if and only if $\Lieg$ is a $\Lie$-stem Leibniz algebra.
\end{Pro}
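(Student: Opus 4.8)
The plan is to prove the two implications separately. One direction is already in hand: Proposition \ref{3.1} gives that if $\Lieg$ is $\Lie$-stem then $\Der_z^{\Lie}(\Lieg)$ is abelian, and this uses neither finite-dimensionality nor $\Lie$-nilpotency, so it transfers verbatim. The work is therefore entirely in the converse: assuming $\Lieg$ is finite-dimensional, $\Lie$-nilpotent with $\gamma_2^{\Lie}(\Lieg)\neq 0$, and that $\Der_z^{\Lie}(\Lieg)$ is abelian, I must show $Z_{\Lie}(\Lieg)\subseteq [\Lieg,\Lieg]_{\Lie}$.

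I would argue by contrapositive: suppose $Z_{\Lie}(\Lieg)\not\subseteq[\Lieg,\Lieg]_{\Lie}$ and produce two $\Lie$-central derivations that do not commute. Since $Z_{\Lie}(\Lieg)\not\subseteq\gamma_2^{\Lie}(\Lieg)$, one can choose an element $z_0\in Z_{\Lie}(\Lieg)$ whose class in the quotient space $\Lieg/\gamma_2^{\Lie}(\Lieg)$ is nonzero; extend the one-dimensional span of $\bar z_0$ to a complement and thereby obtain a linear functional $\varphi\colon\Lieg\to\mathbb K$ vanishing on $\gamma_2^{\Lie}(\Lieg)$ with $\varphi(z_0)=1$. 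The first derivation will be $d_1(x)=\varphi(x)\,z_0$; because $\varphi$ kills $\gamma_2^{\Lie}(\Lieg)=[\Lieg,\Lieg]_{\Lie}$ one checks $d_1([x,y]_{lie})=\varphi([x,y]_{lie})z_0=0=[d_1x,y]_{lie}+[x,d_1y]_{lie}$ (the right side vanishes since $z_0\in Z_{\Lie}(\Lieg)$), so $d_1\in\Der_z^{\Lie}(\Lieg)$. For the second derivation I need an element $w\in\Lieg\setminus Z_{\Lie}(\Lieg)$ together with a suitable linear map landing in $Z_{\Lie}(\Lieg)$ such that the composite $d_2\circ d_1$ and $d_1\circ d_2$ disagree; the natural choice is $d_2(x)=\psi(x)\,z_0$ for another functional $\psi$ vanishing on $\gamma_2^{\Lie}(\Lieg)$, but this gives $[d_1,d_2]=0$ automatically, so $d_2$ must instead be built to move $z_0$ out of $\ker\varphi$ — i.e. I want $d_2$ with $d_2(z_0)\notin\ker d_1$, equivalently $\varphi(d_2 z_0)\neq 0$, while $d_1(d_2 z_0)\neq d_2(d_1 z_0)=d_2(z_0)$ forces the non-commutation.

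Concretely, here is where $\Lie$-nilpotency enters and where I expect the main obstacle. Since $\Lieg$ is $\Lie$-nilpotent and $\gamma_2^{\Lie}(\Lieg)\neq0$, Theorem \ref{2.4} gives a proper chain of $\Lie$-centers, and in particular $Z_{\Lie}(\Lieg)\subsetneq Z_2^{\Lie}(\Lieg)$; pick $w\in Z_2^{\Lie}(\Lieg)\setminus Z_{\Lie}(\Lieg)$, so that $[w,\Lieg]_{\Lie}\subseteq Z_{\Lie}(\Lieg)$ but $[w,\Lieg]_{\Lie}\neq0$; choose $u\in\Lieg$ with $0\neq[w,u]_{lie}=:v\in Z_{\Lie}(\Lieg)$. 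Now define $d_2(x)=\varphi(x)\,w' $ where $w'$ must itself be $\Lie$-central — so in fact I should take $d_2$ to be a $\Lie$-central derivation constructed from the second center: the map $x\mapsto \varphi(x)\,v$ is $\Lie$-central for the same reason $d_1$ was, and then $[d_1,d_2](x)=\varphi(\varphi(x)z_0)v-\varphi(\varphi(x)v)z_0=\varphi(x)\varphi(z_0)v-\varphi(x)\varphi(v)z_0=\varphi(x)v$ provided $\varphi(v)=0$ and $\varphi(z_0)=1$; since $v\in Z_{\Lie}(\Lieg)\cap[\Lieg,\Lieg]_{\Lie}$ we indeed have $\varphi(v)=0$, so $[d_1,d_2](x)=\varphi(x)v\neq0$ for $x$ with $\varphi(x)\neq0$. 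Thus $\Der_z^{\Lie}(\Lieg)$ is non-abelian. The delicate point to get right is the case analysis guaranteeing that $v$ and $z_0$ can be chosen consistently — in particular that $v\neq0$ is genuinely available (this is exactly the content of $\gamma_2^{\Lie}(\Lieg)\neq0$ combined with $\Lie$-nilpotency forcing $\gamma_2^{\Lie}(\Lieg)\cap Z_{\Lie}(\Lieg)\neq0$) and that $\varphi$ can simultaneously satisfy $\varphi(z_0)=1$ and vanish on all of $\gamma_2^{\Lie}(\Lieg)$, which holds precisely because $z_0\notin\gamma_2^{\Lie}(\Lieg)$. Finite-dimensionality is used to split $\gamma_2^{\Lie}(\Lieg)$ off as a direct summand so that such a $\varphi$ exists. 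I would close by remarking that the hypothesis $\gamma_2^{\Lie}(\Lieg)\neq0$ is what makes the statement nonvacuous, since Lie algebras (where $\gamma_2^{\Lie}=[\Lieg,\Lieg]$ may still be nonzero but $Z_{\Lie}=\Lieg$) are excluded from the "stem" conclusion exactly as in the remark following Proposition \ref{3.1}.
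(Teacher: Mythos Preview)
Your proposal is correct and follows essentially the same strategy as the paper: invoke Proposition~\ref{3.1} for one implication, and for the converse argue by contrapositive, choosing $z_0\in Z_{\Lie}(\Lieg)\setminus\gamma_2^{\Lie}(\Lieg)$ and a nonzero $v\in Z_{\Lie}(\Lieg)\cap\gamma_2^{\Lie}(\Lieg)$ (guaranteed by $\Lie$-nilpotency and $\gamma_2^{\Lie}(\Lieg)\neq0$), then building two rank-one $\Lie$-central derivations $d_1=\varphi(\,\cdot\,)z_0$ and $d_2=\varphi(\,\cdot\,)v$ whose commutator is nonzero. The paper phrases the same construction slightly less formally (defining the $d_i$ on a basis rather than via a functional), and your closing aside about Lie algebras is a bit off---for a Lie algebra $\gamma_2^{\Lie}(\Lieg)=0$ always---but neither point affects the argument.
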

\begin{proof}
We only need to prove the converse of Proposition \ref{3.1}.
Assume that  $\Lieg$ is not a \Lie-stem Leibniz algebra. Then, there is some $z_1\in Z_{\Lie}(\Lieg)$ such that $z_1 \notin [\Lieg,\Lieg]_{\Lie}$. Since $\Lieg$ is a  $\Lie$-nilpotent Leibniz algebra and $\gamma_2^{\Lie}(\Lieg) \neq 0$, it follows that $Z_{\Lie}(\Lieg)\cap[\Lieg,\Lieg]_{\Lie}\neq 0.$ Let $z_2\in Z_{\Lie}(\Lieg)\cap[\Lieg,\Lieg]_{\Lie}, ~z_2\neq0,$ and consider the following maps:$$d_1:\Lieg\to\Lieg,~~d_1(z)=\begin{cases} z_1 ~~~\mbox{if}~~z=z_1\\ 0~~~~\mbox{if}~~z \neq z_1\end{cases}$$ and $$d_2:\Lieg\to\Lieg,~~d_2(z)=\begin{cases} z_2 ~~~\mbox{if}~~z=z_1\\ 0 ~~~~\mbox{if}~~z\neq z_1.\end{cases}$$
Clearly, $d_1$ and $d_2$ are \Lie-central derivations, and $d_1$ and $d_2$ do not commute, since $[d_1,d_2](z_1)=d_1(d_2(z_1))-d_2(d_1(z_1))=-z_2\neq 0.$ Therefore  $\Der_z^{\Lie}(\Lieg)$ is not abelian. This completes the proof.
\end{proof}

\begin{Le} \label{3.3}
Let $(\eta, \xi)$ be a $\Lie$-isoclinism between the Leibniz algebras $\Lieg$ and $\Lieq$. If $\Lieg$ is a $\Lie$-stem Leibniz algebra, then $\xi$ maps $Z_{\Lie}(\Lieg)$ onto $Z_{\Lie}(\Lieq) \cap [\Lieq, \Lieq]_{\Lie}$.
\end{Le}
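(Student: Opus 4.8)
The plan is to exploit the defining commutativity square of the $\Lie$-isoclinism $(\eta,\xi):\Lieg\sim\Lieq$, which by definition is the isoclinism $(\eta,\xi):(e_g)\to(e_q)$ between the canonical extensions $(e_g):0\to Z_{\Lie}(\Lieg)\to\Lieg\to\Lieg/Z_{\Lie}(\Lieg)\to 0$ and likewise for $\Lieq$. Recall that $\xi:[\Lieg,\Lieg]_{\Lie}\to[\Lieq,\Lieq]_{\Lie}$ is an isomorphism, so it certainly maps $[\Lieg,\Lieg]_{\Lie}$ onto $[\Lieq,\Lieq]_{\Lie}$; the content of the lemma is that, when $\Lieg$ is $\Lie$-stem, $\xi$ carries $Z_{\Lie}(\Lieg)$ \emph{onto} $Z_{\Lie}(\Lieq)\cap[\Lieq,\Lieq]_{\Lie}$. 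Since $\Lieg$ is $\Lie$-stem we have $Z_{\Lie}(\Lieg)\subseteq[\Lieg,\Lieg]_{\Lie}$, so $\xi$ is at least defined on $Z_{\Lie}(\Lieg)$ and $\xi(Z_{\Lie}(\Lieg))\subseteq[\Lieq,\Lieq]_{\Lie}$ automatically. Thus the two things I need to prove are: (i) $\xi(Z_{\Lie}(\Lieg))\subseteq Z_{\Lie}(\Lieq)$, and (ii) the reverse inclusion $Z_{\Lie}(\Lieq)\cap[\Lieq,\Lieq]_{\Lie}\subseteq\xi(Z_{\Lie}(\Lieg))$.

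For (i): take $z\in Z_{\Lie}(\Lieg)$ and an arbitrary $q'\in[\Lieq,\Lieq]_{\Lie}$; writing $q'=\xi(g')$ with $g'\in[\Lieg,\Lieg]_{\Lie}$, I want $[\xi(z),q']_{lie}=0$, but more to the point I should test against \emph{all} of $\Lieq$, not just $[\Lieq,\Lieq]_{\Lie}$. So take any $y\in\Lieq$; since $\eta':\Lieg/Z_{\Lie}(\Lieg)\to\Lieq/Z_{\Lie}(\Lieq)$ is an isomorphism (Proposition \ref{Lie-isoclinism}(a)) there is $x\in\Lieg$ with $\pi_{\Lieq}(x\text{-image})=\eta'(\overline{x})$ hitting $\overline{y}$; then $[x,\cdot]_{lie}$-brackets are computed through the commutator maps $C_g,C_q$, and the isoclinism square gives $C_q(\eta(\overline{x}),\eta(\overline{z}))=\xi(C_g(\overline{x},\overline{z}))$. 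But $\overline{z}=0$ in $\Lieg/Z_{\Lie}(\Lieg)$, so $C_g(\overline{x},\overline{z})$... wait — $C_g$ takes arguments in the quotient, and $z$ maps to $0$ there, so $C_g(\overline{x},\bar 0)=[x,z]_{lie}=0$ since $z\in Z_{\Lie}(\Lieg)$; hence $C_q(\eta(\overline x),0)=0$, i.e. $[\xi(z),y]_{lie}$ represents $0$ — more carefully, $[\text{any lift of }\eta(\overline x),\,\xi(z)]_{lie}=\xi(C_g(\overline x,\bar 0))=0$, and every $y\in\Lieq$ differs from such a lift by an element of $Z_{\Lie}(\Lieq)$, against which $\xi(z)\in[\Lieq,\Lieq]_{\Lie}$ already brackets trivially. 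So $\xi(z)\in Z_{\Lie}(\Lieq)$.

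For (ii): let $w\in Z_{\Lie}(\Lieq)\cap[\Lieq,\Lieq]_{\Lie}$ and put $z=\xi^{-1}(w)\in[\Lieg,\Lieg]_{\Lie}$; I must show $z\in Z_{\Lie}(\Lieg)$, i.e. $[x,z]_{lie}=0$ for all $x\in\Lieg$. Run the same square in reverse: $[x,z]_{lie}=C_g(\overline x,\overline z)$, and $\xi(C_g(\overline x,\overline z))=C_q(\eta'(\overline x),\eta'(\overline z))=[\,\text{lift of }\eta'(\overline x),\,w\,]_{lie}=0$ because $w\in Z_{\Lie}(\Lieq)$; since $\xi$ is injective, $C_g(\overline x,\overline z)=0$, i.e. $[x,z]_{lie}=0$. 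Thus $z\in Z_{\Lie}(\Lieg)$ and $w=\xi(z)\in\xi(Z_{\Lie}(\Lieg))$.

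The only genuinely delicate point is bookkeeping the commutator maps: $C_g:\Lieg/Z_{\Lie}(\Lieg)\times\Lieg/Z_{\Lie}(\Lieg)\to[\Lieg,\Lieg]_{\Lie}$ is well defined precisely because $Z_{\Lie}(\Lieg)$ is $\Lie$-central, and $[x,y]_{lie}$ depends only on the classes of $x,y$ modulo $Z_{\Lie}(\Lieg)$; I should state this explicitly and note that $[x,z]_{lie}=0\iff z\in Z_{\Lie}(\Lieg)$ is exactly the defining condition of the $\Lie$-center, so testing $C_g(\overline x,\overline z)$ over all $\overline x$ detects membership in $Z_{\Lie}(\Lieg)$. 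With that observation both inclusions are immediate translations across the isoclinism square, and combining them yields $\xi(Z_{\Lie}(\Lieg))=Z_{\Lie}(\Lieq)\cap[\Lieq,\Lieq]_{\Lie}$.
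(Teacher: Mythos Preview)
Your argument has a genuine gap: both directions silently use the compatibility
\[
\overline{\xi(z)} \;=\; \eta'(\bar z)\quad\text{in }\Lieq/Z_{\Lie}(\Lieq),\qquad\text{for every }z\in[\Lieg,\Lieg]_{\Lie},
\]
but you never prove it, and without it the computations are circular. Concretely, in step~(i) you assert
\[
[\,\text{lift of }\eta'(\bar x),\ \xi(z)\,]_{lie} \;=\; \xi\bigl(C_g(\bar x,\bar 0)\bigr).
\]
The right side is $C_q(\eta'(\bar x),\eta'(\bar 0))=C_q(\eta'(\bar x),0)$, which evaluates to $[\,\text{lift},\ \text{any element of }Z_{\Lie}(\Lieq)\,]_{lie}$; for this to equal the left side you need $\xi(z)$ itself to be a lift of $0$, i.e.\ $\xi(z)\in Z_{\Lie}(\Lieq)$, which is exactly the conclusion you are trying to establish. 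The same issue reappears in step~(ii): writing $C_q(\eta'(\bar x),\eta'(\bar z))=[\,\text{lift of }\eta'(\bar x),\ w\,]_{lie}$ presupposes that $w=\xi(z)$ is a lift of $\eta'(\bar z)$, again the unproved compatibility.

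The paper's proof supplies precisely this missing ingredient. Using that $\Lieg$ is $\Lie$-stem, one writes $z=\sum_i\lambda_i[x_i,y_i]_{lie}$; the isoclinism square gives $\xi(z)=\sum_i\lambda_i[\eta(x_i),\eta(y_i)]_{lie}$, and since $\eta'$ is a Leibniz isomorphism one computes
\[
\overline{\xi(z)}=\sum_i\lambda_i\,[\eta'(\bar x_i),\eta'(\bar y_i)]_{lie}
=\eta'\Bigl(\sum_i\lambda_i[\bar x_i,\bar y_i]_{lie}\Bigr)=\eta'(\bar z).
\]
Once this identity is in hand, both of your steps become one-liners: for~(i), $\overline{\xi(z)}=\eta'(0)=0$; for~(ii), $\eta'(\bar z)=\overline{w}=0$ forces $\bar z=0$ by injectivity of $\eta'$. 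So your overall strategy is sound, but the actual work of the lemma is exactly the compatibility you skipped; insert its proof (or invoke \cite[Prop.~3.8]{BC}, which encodes it) and the argument goes through.
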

\begin{proof}
Since $Z_{\mathsf{Lie}}(\mathfrak{g})\subseteq \lbrack \mathfrak{g},\mathfrak{g}]_{\mathsf{Lie}}$, then an element $z$ of $Z_{\mathsf{Lie}}(\mathfrak{g})$ can be written as
$z=\underset{i=1}{\overset{n}{\sum}}\lambda _{i}\left[ x_{i},y_{i}\right]_{lie}$, with $\lambda _{i} \in  \mathbb{K}$ and $x_{i},y_{i}\in \Lieg, i =1, \dots, n$.

Let $\eta' : {\Lieg}/Z_{\Lie}(\Lieg)\longrightarrow \mathfrak{q}/Z_{\mathsf{Lie}}(\mathfrak{q})$, $\eta' \left( x_{i}+Z_{\mathsf{Lie}}({\Lieg})\right) = \eta(x_{i})+Z_{\mathsf{Lie}}({\Lieq})$ and $\eta' \left( y_{i}+ \right.$ $\left. Z_{\mathsf{Lie}}(\Lieg)\right) =\eta(y_{i})+Z_{\mathsf{Lie}}(\mathfrak{q}), i = 1, \dots, n,$ the isomorphism provided by \cite[Proposition 3.4]{BC}. Then
 \[
 \begin{array}{lcl}
\xi \left( z\right) +Z_{\mathsf{Lie}}(\mathfrak{q}) &= &\xi \left( \underset{i=1}{\overset{n}{\sum }}\lambda _{i}\left[ x_{i},y_{i}\right]_{lie} \right) +Z_{\mathsf{Lie}}(\mathfrak{q)}\\
  & = &\underset{i=1}{\overset{n}{\sum }}\lambda _{i} \xi \left[ x_{i}, y_{i} \right]_{lie} +Z_{\mathsf{Lie}}(\mathfrak{q)}\\
& = &\underset{i=1}{\overset{n}{\sum }}\lambda _{i} \left[ \eta(x_i), \eta(y_i)\right]_{lie} +Z_{\mathsf{Lie}}(\mathfrak{q)}\\
& = &\eta' \left( \underset{i=1}{\overset{n}{\sum }} \lambda _{i}\left[ x_{i},y_{i}\right]_{lie} +Z_{\mathsf{Lie}}(\mathfrak{g)}\right)\\
&= & Z_{\mathsf{Lie}}(\mathfrak{q)}.
\end{array}
\]
The surjective character can be easily established.
\end{proof}

\begin{Pro} \label{mono}
Let $\Lieg$ and  $\Lieq$ be two $\Lie$-isoclinic Leibniz algebras and $\Lieg$ be a $\Lie$-stem Leibniz algebra. Then every $d \in \Der_z^{\Lie}(\Lieg)$ induces a $\Lie$-central derivation $d^{*}$ of $\Lieq$. Moreover, the map $d \mapsto d^*$ is a monomorphism from $\Der_z^{\Lie}(\Lieg)$ into $\Der_z^{\Lie}(\Lieq)$.
\end{Pro}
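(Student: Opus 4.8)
The plan is to use the $\Lie$-isoclinism $(\eta,\xi):\Lieg\sim\Lieq$ to transport a $\Lie$-central derivation $d$ of $\Lieg$ to one of $\Lieq$. By Proposition \ref{Lie-isoclinism}, $\eta$ restricts to the identification $\xi:[\Lieg,\Lieg]_{\Lie}\to[\Lieq,\Lieq]_{\Lie}$ and induces $\eta':\Lieg/Z_{\Lie}(\Lieg)\to\Lieq/Z_{\Lie}(\Lieq)$. The key structural input is Lemma \ref{3.3}: since $\Lieg$ is $\Lie$-stem, $\xi$ carries $Z_{\Lie}(\Lieg)$ (which lies inside $[\Lieg,\Lieg]_{\Lie}$) bijectively onto $Z_{\Lie}(\Lieq)\cap[\Lieq,\Lieq]_{\Lie}$. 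So for $d\in\Der_z^{\Lie}(\Lieg)$ we have $d(\Lieg)\subseteq Z_{\Lie}(\Lieg)\subseteq[\Lieg,\Lieg]_{\Lie}$, and I would like to define $d^*$ on $\Lieq$ by the recipe: given $y\in\Lieq$, pick any representative in $\Lieg$ of the class $\eta'^{-1}(y+Z_{\Lie}(\Lieq))$, say choose $x\in\Lieg$ with $\eta(x)$ congruent to $y$ modulo $Z_{\Lie}(\Lieq)$, and set $d^*(y)=\xi(d(x))$.

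First I would check this is well-defined. If $x,x'\in\Lieg$ both map under $\eta$ into $y+Z_{\Lie}(\Lieq)$, then $\eta'(x+Z_{\Lie}(\Lieg))=\eta'(x'+Z_{\Lie}(\Lieg))$, so $x-x'\in Z_{\Lie}(\Lieg)$ because $\eta'$ is an isomorphism; and since $\Lieg$ is $\Lie$-stem and $d$ is $\Lie$-central, the argument in the proof of Proposition \ref{3.1} shows $d$ vanishes on $Z_{\Lie}(\Lieg)$, so $d(x)=d(x')$ and $\xi(d(x))=\xi(d(x'))$. One must also check every $y\in\Lieq$ admits such an $x$: this is exactly surjectivity of $\eta$ composed with the projection, which holds since $\eta'$ is onto. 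Linearity of $d^*$ is immediate from linearity of $\xi$, $d$, and the fact that representatives can be chosen linearly (or simply: well-definedness already forces linearity once we know the ambiguity is killed). Next I would verify $d^*$ is a $\Lie$-derivation: for $y_1,y_2\in\Lieq$ with chosen preimages $x_1,x_2$, the bracket $[x_1,x_2]_{lie}$ is a preimage-datum for $[y_1,y_2]_{lie}$ up to an element of $Z_{\Lie}(\Lieq)\cap[\Lieq,\Lieq]_{\Lie}$, and using that $\xi$ agrees with $\eta$ on brackets (the commutativity square \eqref{square isoclinic}) together with the Leibniz-derivation identity for $d$, one gets $d^*([y_1,y_2]_{lie})=\xi(d([x_1,x_2]_{lie}))=\xi([d(x_1),x_2]_{lie}+[x_1,d(x_2)]_{lie})=[d^*(y_1),y_2]_{lie}+[y_1,d^*(y_2)]_{lie}$; here I use that $\xi$ respects the $lie$-bracket because it is the restriction of the isoclinism datum and $C_2\circ(\eta\times\eta)=\xi\circ C_1$. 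That $d^*$ is $\Lie$-central is clear: $d^*(\Lieq)\subseteq\xi(d(\Lieg))\subseteq\xi(Z_{\Lie}(\Lieg))\subseteq Z_{\Lie}(\Lieq)$ by Lemma \ref{3.3}.

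Finally I would show $d\mapsto d^*$ is a Lie algebra homomorphism and is injective. For injectivity: if $d^*=0$, then $\xi(d(x))=0$ for all $x\in\Lieg$, and since $\xi$ is injective, $d(x)=0$ for all $x$, so $d=0$. For the homomorphism property one checks $(d_1\circ d_2)^*$ relates to $d_1^*\circ d_2^*$: given $y$ with preimage $x$, $d_2^*(y)=\xi(d_2(x))\in Z_{\Lie}(\Lieq)$, so $d_1^*(d_2^*(y))$ is computed by choosing a preimage of $d_2^*(y)$; but $d_2^*(y)$ lies in $Z_{\Lie}(\Lieq)\cap[\Lieq,\Lieq]_{\Lie}=\xi(Z_{\Lie}(\Lieg))$, say $=\xi(z)$ with $z\in Z_{\Lie}(\Lieg)$, and then $d_1^*$ of it equals $\xi(d_1(z))$ — but $d_1$ vanishes on $Z_{\Lie}(\Lieg)$, so $d_1^*\circ d_2^*=0$, and symmetrically $d_2^*\circ d_1^*=0$, so $[d_1,d_2]^*=0=[d_1^*,d_2^*]$, consistent with both sides being abelian (indeed $\Der_z^{\Lie}(\Lieg)$ is abelian by Proposition \ref{3.1}). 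I expect the main obstacle to be the careful bookkeeping in the $\Lie$-derivation check: one has to keep straight that $\xi$ is only defined on $[\Lieg,\Lieg]_{\Lie}$, that the preimage $x$ of $y$ is only determined modulo $Z_{\Lie}(\Lieq)$ on the $\Lieq$-side (pulled back to $Z_{\Lie}(\Lieg)$ on the $\Lieg$-side), and that $d$ lands in $[\Lieg,\Lieg]_{\Lie}$ so that $\xi(d(x))$ genuinely makes sense — each of these is routine but must be invoked at the right moment, and the compatibility of $\xi$ with the $lie$-bracket (as opposed to merely the commutator map $C$) is the point where the hypothesis that $\Lieg$ is $\Lie$-stem is really used, via Lemma \ref{3.3}.
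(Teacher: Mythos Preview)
Your proposal is correct and follows the same construction as the paper: define $d^*(y)=\xi(d(x))$ for any $x$ with $\eta(\overline{x})=\overline{y}$, use $d(Z_{\Lie}(\Lieg))=0$ (from the $\Lie$-stem hypothesis) for well-definedness, invoke Lemma~\ref{3.3} for $\Lie$-centrality, and note both $[d_1,d_2]$ and $[d_1^*,d_2^*]$ vanish for the homomorphism property. The one simplification you miss is in the $\Lie$-derivation check: since $d(x_i)\in Z_{\Lie}(\Lieg)$ and $d^*(y_i)\in Z_{\Lie}(\Lieq)$, both sides of the identity are already zero, so the bracket-compatibility of $\xi$ that you flag as the ``main obstacle'' is never actually needed.
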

\begin{proof}
Let $(\eta,\xi)$ be a \Lie-isoclinism between $\Lieg$ and $\Lieq,$ and let $d\in \Der_z^{\Lie}(\Lieg).$ Then for any $y\in \Lieq,$ we have $y+Z_{\Lie}(\Lieq)=\eta(x+Z_{\Lie}(\Lieg))$ for some $x\in \Lieg$, since $\eta$ is bijective. Now consider the map $d^*:\Lieq\to\Lieq$ defined by $d^*(y)=\xi(d(x)),$  which  is well-defined  since $d(Z_{\Lie}(\Lieg))=0$ as  $Z_{\Lie}(\Lieg) \subseteq [\Lieg,\Lieg]_{\Lie}.$   Moreover, $d^*\in \Der_z^{\Lie}(\Lieq)$ since $d(x)\in Z_{\Lie}(\Lieg)$ and  $\xi(d(x))\in Z_{\Lie}(\Lieq)\cap [\Lieq,\Lieq]_{\Lie}$  by Lemma \ref{3.3}.
$d^*$ is a \Lie-derivation  since  $d^*([y_1,y_2]_{lie})=\xi(d([x_1,x_2]_{lie}))=\xi([d(x_1),x_2]_{lie}+[x_1,d(x_2)]_{lie})=\xi(0+0)=0$ and $[y_1,d^*(y_2)]_{lie}+ [d^*(y_1),y_2]_{lie}=0$ since $d^*(y_1), d^*(y_2)\in  Z_{\Lie}(\Lieq).$

 Clearly,  the map $\phi: d\to d^*$ is  linear and one-to-one  since $\xi$ an isomorphism. To show that $\phi$ is compatible with the Lie-bracket, let $d_1,d_2\in  \Der_z^{\Lie}(\Lieg).$ Then for $i,j=1,2,$ $d_i(\Lieg)\subseteq Z_{\Lie}(\Lieg)\subseteq [\Lieg,\Lieg]_{\Lie}$ and $d_j([\Lieg,\Lieg]_{\Lie})=0.$ So on one hand $[d_1,d_2]=d_1d_2-d_2d_1=0$ and thus  $[d_1,d_2]^*=0$ as $\xi$ is an isomorphism. On the other hand, $d^*_i(\Lieq)\subseteq Z_{\Lie}(\Lieq)\cap [\Lieq,\Lieq]_{\Lie}.$ So $d^*_j(d^*_i(\Lieq))=0,$ by definition of $d^*_j,$  and thus $[d^*_1,d^*_2]=d^*_1d^*_2-d^*_2d^*_1=0.$ Therefore $\phi([d_1,d_2])=[\phi(d_1),\phi(d_2)].$ This completes the proof.
\end{proof}

\begin{Le}\label{DT}
For any $\Lie$-stem Leibniz algebra $\Lieg$, there is a Lie algebra isomorphism  $\Der_z^{\Lie}(\Lieg)\cong T\left(\frac{\Lieg}{[\Lieg, \Lieg]_{\Lie}}, Z_{\Lie}(\Lieg)\right).$
 \end{Le}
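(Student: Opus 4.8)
The goal is to exhibit a Lie algebra isomorphism $\Der_z^{\Lie}(\Lieg)\cong T\!\left(\Lieg/[\Lieg,\Lieg]_{\Lie},\,Z_{\Lie}(\Lieg)\right)$ for a $\Lie$-stem Leibniz algebra $\Lieg$. The starting observation is that any $d\in\Der_z^{\Lie}(\Lieg)$ satisfies $d([\Lieg,\Lieg]_{\Lie})=0$ (already noted after Definition~\ref{Lie central der}), and since $\Lieg$ is $\Lie$-stem we have $Z_{\Lie}(\Lieg)\subseteq[\Lieg,\Lieg]_{\Lie}$, so in fact $d$ vanishes on $Z_{\Lie}(\Lieg)$ as well (this is exactly the point used in Proposition~\ref{3.1}). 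Hence every $d\in\Der_z^{\Lie}(\Lieg)$ factors through $\Lieg/[\Lieg,\Lieg]_{\Lie}$ and lands in $Z_{\Lie}(\Lieg)$, i.e. it determines a linear map $\bar d\colon \Lieg/[\Lieg,\Lieg]_{\Lie}\to Z_{\Lie}(\Lieg)$. I would \emph{define} $\Psi\colon\Der_z^{\Lie}(\Lieg)\to T\!\left(\Lieg/[\Lieg,\Lieg]_{\Lie},Z_{\Lie}(\Lieg)\right)$ by $\Psi(d)=\bar d$.

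\textbf{Key steps, in order.} First I would check $\Psi$ is well-defined: $\bar d$ is a well-defined linear map because $d$ kills $[\Lieg,\Lieg]_{\Lie}$, and its codomain really is $Z_{\Lie}(\Lieg)$ by hypothesis on $d$. Second, surjectivity: given any linear $f\colon\Lieg/[\Lieg,\Lieg]_{\Lie}\to Z_{\Lie}(\Lieg)$, compose with the projection $p\colon\Lieg\to\Lieg/[\Lieg,\Lieg]_{\Lie}$ to get $d:=f\circ p\colon\Lieg\to\Lieg$ (viewing $Z_{\Lie}(\Lieg)\subseteq\Lieg$); one must verify $d$ is a $\Lie$-derivation. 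For this, compute $d([x,y]_{lie})$: since $[x,y]_{lie}\in[\Lieg,\Lieg]_{\Lie}$, its image under $p$ is $0$, so $d([x,y]_{lie})=0$; and $[d(x),y]_{lie}+[x,d(y)]_{lie}=0$ because $d(x),d(y)\in Z_{\Lie}(\Lieg)$. So $d\in\Der_z^{\Lie}(\Lieg)$ and $\Psi(d)=f$. Third, injectivity: if $\bar d=0$ then $d\circ p=0$ hence $d=0$ (as $p$ is surjective). Fourth, $\Psi$ is linear — immediate. Fifth and last, $\Psi$ is a Lie algebra map: here both sides are in fact abelian — the target $T\!\left(\Lieg/[\Lieg,\Lieg]_{\Lie},Z_{\Lie}(\Lieg)\right)$ is abelian because $Z_{\Lie}(\Lieg)$, as a subalgebra of $\Lieg$ with trivial $\Lie$-bracket against all of $\Lieg$, has in particular trivial bracket with itself in the sense relevant to the bracket $[f,g](x)=[f(x),g(x)]$ (one should check $[z,z']=0$ for $z,z'\in Z_{\Lie}(\Lieg)$, which follows from $[z,z']+[z',z]=[z,z']_{lie}=0$ together with... actually one needs $[z,z']=0$ outright; note $z'\in Z_{\Lie}(\Lieg)$ gives $[g,z']_{lie}=0$ for all $g$, but one-sided vanishing is what's needed — I would instead simply invoke that $Z_{\Lie}(\Lieg)$ acts trivially enough, or more safely observe that $\Der_z^{\Lie}(\Lieg)$ is abelian by Proposition~\ref{3.1}, so it suffices to show $T(\ldots)$ is abelian, or even just that $\Psi$ carries the zero bracket to the zero bracket), and the source is abelian by Proposition~\ref{3.1}. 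So $\Psi$ trivially respects brackets, both being zero.

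\textbf{Main obstacle.} The only genuinely delicate point is confirming that the Leibniz-algebra structure on $T\!\left(\Lieg/[\Lieg,\Lieg]_{\Lie},Z_{\Lie}(\Lieg)\right)$ — hence the Lie bracket after Liezation, or whatever bracket is intended — is abelian, so that "$\Psi$ is a Lie homomorphism" reduces to "$0\mapsto 0$". The paper's remark that $T(A,B)$ with $[f,g](x)=[f(x),g(x)]$ is abelian \emph{when $B$ is abelian} is the relevant fact, and one must check $Z_{\Lie}(\Lieg)$ qualifies: for $z,z'\in Z_{\Lie}(\Lieg)$ one has $[z,z']_{lie}=0$, which is enough for the purposes here since the bracket on $T(\ldots)$ relevant to the $\Lie$-structure is the $\lie$-bracket. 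Once that is pinned down, all five verifications are routine, and the isomorphism follows. I would present the argument by first establishing that $d\in\Der_z^{\Lie}(\Lieg)$ is equivalent to "$d$ factors as $\Lieg\xrightarrow{p}\Lieg/[\Lieg,\Lieg]_{\Lie}\xrightarrow{f}Z_{\Lie}(\Lieg)$" — the "only if" using the $\Lie$-stem hypothesis and the "if" using the computation above — and then noting this correspondence is exactly $\Psi$, manifestly bijective and linear, with both Lie algebras abelian so that it is automatically a Lie isomorphism.
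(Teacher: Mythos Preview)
Your approach is essentially the same as the paper's: define $\Psi(d)=\bar d$ via factoring through the quotient, check it is a well-defined linear bijection by producing the inverse $f\mapsto f\circ p$, and then verify bracket compatibility by showing both sides vanish. The paper carries out exactly these steps, and like you it handles the bracket by direct computation rather than by first proving $T(\cdot,\cdot)$ is abelian: it shows $\beta([d_1,d_2])=0$ using the $\Lie$-stem hypothesis (so $d_i(\Lieg)\subseteq Z_{\Lie}(\Lieg)\subseteq[\Lieg,\Lieg]_{\Lie}$ is killed by $d_j$), and separately that $[\beta(d_1),\beta(d_2)]=0$. Your hesitation about whether $Z_{\Lie}(\Lieg)$ is abelian as a Leibniz algebra is legitimate---the definition only gives $[z,z']_{lie}=0$, not $[z,z']=0$---but your fallback to Proposition~\ref{3.1} and the direct vanishing argument is exactly the right move, and matches the paper's resolution.
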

\begin{proof}
Let $d\in \Der_z^{\Lie}(\Lieg)$ be,  then $d(\Lieg)\subseteq  Z_{\Lie}(\Lieg),$ and thus $d([\Lieg, \Lieg]_{\Lie})=0.$ So $d$ induces  the map  $\frac{\Lieg}{[\Lieg, \Lieg]_{\Lie}}\stackrel{\alpha_d}\longrightarrow Z_{\Lie}(\Lieg)$  defined by $\alpha_d(x+[\Lieg, \Lieg]_{\Lie})= d(x)$. Now define the map $\beta \colon \Der_z^{\Lie}(\Lieg)\longrightarrow T\left(\frac{\Lieg}{[\Lieg, \Lieg]_{\Lie}}, Z_{\Lie}(\Lieg)\right)$ by $\beta(d)=\alpha_d.$ Clearly, $\beta$ is a linear map, which  is one-to-one by definition of $\alpha_d.$

$\beta$ is onto since for a given $d^*\in T\left(\frac{\Lieg}{[\Lieg, \Lieg]_{\Lie}}, Z_{\Lie}(\Lieg)\right)$, there exists a linear map $d : \Lieg \to Z_{\Lie}(g)$, $d= d^* \circ \pi$, where $\pi : \Lieg \to \frac{\Lieg}{[\Lieg, \Lieg]_{\Lie}}$ is the canonical projection, such that $\beta(d) = d^*$. Finally, $d \in {\Der}_z^{\Lie}(\Lieg)$ since $d([x,y]_{lie})= d^*([\pi(x), \pi(y)]_{lie}) = d^*(\overline{0})  = 0$; on the other hand, $[d(x),y]_{lie}+[x,d(y)]_{lie}= [d^*(\pi(x)),y]_{lie}+[x,d^*(\pi(y))]_{lie} = 0$, since $d^*(\pi(x)), d^*(\pi(y)) \in Z_{\Lie}(\Lieg)$.  To finish, we show that $\beta([d_1,d_2]) = [\beta(d_1) ,\beta(d_2) ]$ for all $d_1,d_2\in~ \Der_z^{\Lie}(\Lieg).$ Indeed, let $x\in\Lieg.$  It is clear that   $~\beta([d_1,d_2])(\pi(x))=\alpha_{[d_1,d_2]}(\pi(x))=[d_1,d_2](x)=0$ since $d_1(\Lieg), d_2(\Lieg)\subseteq  Z_{\Lie}(\Lieg)\subseteq[\Lieg, \Lieg]_{\Lie}$   and $d_1( [\Lieg, \Lieg]_{\Lie})=d_2([\Lieg, \Lieg]_{\Lie})=0.$
On the other hand, $[\beta(d_1) ,\beta(d_2) ](\pi(x))=[\alpha_{d_1},\alpha_{d_2}](\pi(x))=\alpha_{d_1}(d_2(x))-\alpha_{d_2}(d_1(x))=0$  since  $\alpha_{d_1}([\Lieg, \Lieg]_{\Lie})=0=\alpha_{d_2}([\Lieg, \Lieg]_{\Lie}). $ Hence $\beta([d_1,d_2]) = [\beta(d_1) ,\beta(d_2) ].$ This completes the proof.
\end{proof}

\begin{Co} \label{TZ}
For any arbitrary Leibniz algebra $\Lieq$, the Lie algebra $\Der_z^{\Lie}(\Lieq)$ has a central  subalgebra $\Lien$ isomorphic to $T\left(\frac{\Lieg}{[\Lieg, \Lieg]_{\Lie}}, Z_{\Lie}(\Lieg)\right)$ for some $\Lie$-stem Leibniz algebra $\Lieg$ $\Lie$-isoclinic to $\Lieq$. Moreover, each element of $\Lien$ sends $Z_{\Lie}(\Lieq)$ to the zero subalgebra.
\end{Co}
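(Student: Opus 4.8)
The plan is to combine Theorems~1 and~2 of \cite{RC} with Lemma~\ref{DT} and Proposition~\ref{mono}. First I would invoke \cite{RC} to produce, inside the $\Lie$-isoclinic class of $\Lieq$, a $\Lie$-stem Leibniz algebra $\Lieg$; thus there is a $\Lie$-isoclinism $(\eta,\xi):\Lieg\sim\Lieq$. By Lemma~\ref{DT} we have a Lie algebra isomorphism $\Der_z^{\Lie}(\Lieg)\cong T\!\left(\frac{\Lieg}{[\Lieg,\Lieg]_{\Lie}},Z_{\Lie}(\Lieg)\right)$, so it suffices to realize $\Der_z^{\Lie}(\Lieg)$ as a central subalgebra $\Lien$ of $\Der_z^{\Lie}(\Lieq)$ whose elements kill $Z_{\Lie}(\Lieq)$.

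Next I would apply Proposition~\ref{mono}: since $\Lieg$ is $\Lie$-stem and $\Lie$-isoclinic to $\Lieq$, the assignment $d\mapsto d^{*}$ is a monomorphism of Lie algebras $\phi:\Der_z^{\Lie}(\Lieg)\hookrightarrow\Der_z^{\Lie}(\Lieq)$. Set $\Lien:=\phi\bigl(\Der_z^{\Lie}(\Lieg)\bigr)$, which is then isomorphic to $\Der_z^{\Lie}(\Lieg)$, hence to $T\!\left(\frac{\Lieg}{[\Lieg,\Lieg]_{\Lie}},Z_{\Lie}(\Lieg)\right)$ as required. It remains to check the two extra claims: that $\Lien$ is central in $\Der_z^{\Lie}(\Lieq)$, and that each element of $\Lien$ sends $Z_{\Lie}(\Lieq)$ to $0$.

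For the second claim, recall from the construction in Proposition~\ref{mono} that for $d\in\Der_z^{\Lie}(\Lieg)$ and $y\in\Lieq$ one writes $y+Z_{\Lie}(\Lieq)=\eta(x+Z_{\Lie}(\Lieg))$ and sets $d^{*}(y)=\xi(d(x))$. In particular, if $y\in Z_{\Lie}(\Lieq)$ then $y+Z_{\Lie}(\Lieq)=\overline{0}$, so $x\in Z_{\Lie}(\Lieg)$, and since $\Lieg$ is $\Lie$-stem we have $d(x)=0$, whence $d^{*}(y)=\xi(0)=0$. Thus every element of $\Lien$ annihilates $Z_{\Lie}(\Lieq)$. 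For the centrality of $\Lien$ in $\Der_z^{\Lie}(\Lieq)$: take $d^{*}\in\Lien$ and an arbitrary $\delta\in\Der_z^{\Lie}(\Lieq)$. Then $\delta(\Lieq)\subseteq Z_{\Lie}(\Lieq)$, and by the previous paragraph $d^{*}$ vanishes on $Z_{\Lie}(\Lieq)$, so $d^{*}\circ\delta=0$; conversely $\mathrm{Im}(d^{*})=\xi\bigl(d(\Lieg)\bigr)\subseteq Z_{\Lie}(\Lieq)\cap[\Lieq,\Lieq]_{\Lie}$ by Lemma~\ref{3.3}, and $\delta$ vanishes on $[\Lieq,\Lieq]_{\Lie}$ (every $\Lie$-central derivation annihilates $\gamma_2^{\Lie}(\Lieq)$), so $\delta\circ d^{*}=0$ as well. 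Hence $[\delta,d^{*}]=0$ for all $\delta\in\Der_z^{\Lie}(\Lieq)$, i.e.\ $\Lien$ is central. The main subtlety to handle carefully is making sure the two vanishing facts — $d^{*}$ kills $Z_{\Lie}(\Lieq)$ and $\delta$ kills $[\Lieq,\Lieq]_{\Lie}$, together with the containment $\mathrm{Im}(d^{*})\subseteq Z_{\Lie}(\Lieq)\cap[\Lieq,\Lieq]_{\Lie}$ — are combined in the right order so that both composites $d^{*}\delta$ and $\delta d^{*}$ are seen to be zero; everything else is a direct bookkeeping of the isomorphisms already established.
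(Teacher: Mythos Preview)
Your proposal is correct and follows essentially the same route as the paper's own proof: produce a $\Lie$-stem algebra $\Lieg$ in the $\Lie$-isoclinic class of $\Lieq$, set $\Lien=\phi(\Der_z^{\Lie}(\Lieg))$ via the monomorphism of Proposition~\ref{mono}, use Lemma~\ref{3.3} to get $\mathrm{Im}(d^*)\subseteq Z_{\Lie}(\Lieq)\cap[\Lieq,\Lieq]_{\Lie}$, and combine the two vanishing facts ($d^*$ kills $Z_{\Lie}(\Lieq)$, any $\delta\in\Der_z^{\Lie}(\Lieq)$ kills $[\Lieq,\Lieq]_{\Lie}$) to conclude centrality, finishing with Lemma~\ref{DT}. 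The only cosmetic difference is that the paper cites \cite[Corollary~4.1]{BC2} rather than \cite{RC} for the existence of the $\Lie$-stem representative; both references are available in the paper's preliminaries.
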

\begin{proof}
By  \cite[Corollary 4.1]{BC2}, there is a $\Lie$-stem Leibniz algebra $\Lieg$ \Lie-isoclinic to $\Lieq.$  Denote this \Lie-isoclinism by $(\eta,\xi).$ Now, by the proof of Proposition \ref{mono}, $ \Lien:=\{d^*~|~d\in \Der_z^{\Lie}(\Lieg)\}$  is a subalgebra of $\Der_z^{\Lie}(\Lieq)$ isomorphic to $\Der_z^{\Lie}(\Lieg).$  $\Lien$ is a central subalgebra of $\Der_z^{\Lie}(\Lieq).$  Indeed, let $d_0\in \Lien$ and $d_1\in \Der_z^{\Lie}(\Lieq).$ Then for any $y\in \Lieq,$ we have by definition, $d_0^*(y)=\xi(d_0(x))$ with $\pi_2(y)=\eta(\pi_1(x)).$ So $d_1(d_0^*(y))=0$ since $d_0^*(\Lieq)\subseteq Z_{\Lie}(\Lieq)\cap [\Lieq,\Lieq]_{\Lie}$ by Lemma 3.3, and $d_1([\Lieq,\Lieq]_{\Lie})=0.$ Also, $d_0^*(Z_{\Lie}(\Lieq))=0$ since $\eta$ is one-to-one and $\xi$ is a homomorphism. In particular, $d_0^*(d_1(y))=0$ since $d_1(\Lieq)\subseteq Z_{\Lie}(\Lieq).$ Therefore $[d_0^*,d_1]=0.$ Moreover, for any $d_0^*\in \Lien,$ $d_0^*(Z_{\Lie}(\Lieq))=0$ as mentioned above. To complete the proof, notice that $\Der_z^{\Lie}(\Lieg)\cong T\left(\frac{\Lieg}{[\Lieg, \Lieg]_{\Lie}}, Z_{\Lie}(\Lieg)\right)$ thanks to Lemma \ref{DT}.
\end{proof}

\begin{Le} \label{gNilq}
Let $\Lieg$ and $\Lieq$  be  two  $\Lie$-isoclinic Leibniz algebras. If  $\Lieg$ is $\Lie$-nilpotent of class $c$, then so is  $\Lieq$.
\end{Le}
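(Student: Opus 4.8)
\medskip
\noindent\textbf{Proof proposal.}
The plan is to read off the $\Lie$-nilpotency class of $\Lieg$ from its quotient $\Lieg/Z_{\Lie}(\Lieg)$ and then transport it to $\Lieq$ through the isomorphism $\Lieg/Z_{\Lie}(\Lieg) \cong \Lieq/Z_{\Lie}(\Lieq)$ furnished by Proposition \ref{Lie-isoclinism}(a).

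First I would record two elementary facts about the lower $\Lie$-central series. (i) For a surjective homomorphism of Leibniz algebras $\varphi \colon \Lieg \twoheadrightarrow \Lieh$ one has $\varphi(\gamma_i^{\Lie}(\Lieg)) = \gamma_i^{\Lie}(\Lieh)$ for all $i \geq 1$; this is an easy induction on $i$, the inductive step using that $\varphi$ carries $[x,y]_{lie}$ to $[\varphi(x),\varphi(y)]_{lie}$ and that a surjection carries the two-sided ideal generated by a set $S$ onto the two-sided ideal generated by $\varphi(S)$, so that $\varphi([\gamma_{i-1}^{\Lie}(\Lieg),\Lieg]_{\Lie}) = [\varphi(\gamma_{i-1}^{\Lie}(\Lieg)),\varphi(\Lieg)]_{\Lie}$. (ii) Applied to the canonical projection $\pi \colon \Lieg \twoheadrightarrow \Lieg/Z_{\Lie}(\Lieg)$, fact (i) gives $\gamma_i^{\Lie}(\Lieg/Z_{\Lie}(\Lieg)) = \pi(\gamma_i^{\Lie}(\Lieg))$; hence $\gamma_i^{\Lie}(\Lieg/Z_{\Lie}(\Lieg)) = 0$ if and only if $\gamma_i^{\Lie}(\Lieg) \subseteq Z_{\Lie}(\Lieg)$, and since an element $a$ has $[a,g]_{lie} = 0$ for all $g \in \Lieg$ exactly when $a \in Z_{\Lie}(\Lieg)$, this is in turn equivalent to $\gamma_{i+1}^{\Lie}(\Lieg) = [\gamma_i^{\Lie}(\Lieg),\Lieg]_{\Lie} = 0$.

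The heart of the matter is the claim that a nonzero Leibniz algebra $\Lieg$ is $\Lie$-nilpotent of class $c \geq 1$ if and only if $\Lieg/Z_{\Lie}(\Lieg)$ is $\Lie$-nilpotent of class $c-1$ (with the convention that the zero algebra has class $0$). For the forward direction, $\gamma_{c+1}^{\Lie}(\Lieg) = 0$ forces $\gamma_c^{\Lie}(\Lieg/Z_{\Lie}(\Lieg)) = 0$ by (ii); and were $\gamma_{c-1}^{\Lie}(\Lieg/Z_{\Lie}(\Lieg))$ also zero, then (ii) applied with $i = c-1$ would give $\gamma_c^{\Lie}(\Lieg) = 0$, contradicting $\gamma_c^{\Lie}(\Lieg) \neq 0$ (for $c = 1$ the situation degenerates: $\gamma_2^{\Lie}(\Lieg) = 0$ says $\Lieg = Z_{\Lie}(\Lieg)$, i.e. $\Lieg/Z_{\Lie}(\Lieg) = 0$). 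The reverse direction is the mirror image of the same two computations. Granting the claim, the proof closes quickly: $\Lieg$ is nonzero since $\gamma_c^{\Lie}(\Lieg) \neq 0$, so $\Lieg/Z_{\Lie}(\Lieg)$ is $\Lie$-nilpotent of class $c-1$; by Proposition \ref{Lie-isoclinism}(a) the induced map $\eta'$ is a Leibniz-algebra isomorphism $\Lieg/Z_{\Lie}(\Lieg) \xrightarrow{\sim} \Lieq/Z_{\Lie}(\Lieq)$, and any Leibniz-algebra isomorphism carries $\gamma_i^{\Lie}$ to $\gamma_i^{\Lie}$ and so preserves the $\Lie$-nilpotency class; hence $\Lieq/Z_{\Lie}(\Lieq)$ is $\Lie$-nilpotent of class $c-1$, and the reverse half of the claim gives that $\Lieq$ is $\Lie$-nilpotent of class $c$.

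I expect the only genuine work to be fact (i), the commutation of $\gamma_i^{\Lie}$ with surjective homomorphisms; it rests on the identity $\varphi([\Liem,\Lien]_{\Lie}) = [\varphi(\Liem),\varphi(\Lien)]_{\Lie}$ for a surjection $\varphi$, which deserves a line because $[-,-]_{\Lie}$ is defined as a generated two-sided ideal and not merely a linear span. Everything after that is bookkeeping with the two defining relations $\gamma_{c+1}^{\Lie}(-) = 0 \neq \gamma_c^{\Lie}(-)$. The one point needing attention is the boundary case $c = 1$, where $\Lieg$ is $\Lie$-abelian and nonzero: the argument still shows $\Lieq$ is $\Lie$-abelian, hence $\Lie$-nilpotent of class $1$ provided $\Lieq \neq 0$ — and for $c \geq 2$ the algebra $\Lieq/Z_{\Lie}(\Lieq) \cong \Lieg/Z_{\Lie}(\Lieg)$ is automatically nonzero, so $\Lieq \neq 0$ comes for free; this is exactly why the zero-algebra convention is built into the claim.
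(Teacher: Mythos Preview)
Your argument is correct and close in spirit to the paper's, but you work with the \emph{lower} $\Lie$-central series while the paper uses the \emph{upper} one. The paper shows that ${\ze}_{i+1}^{\Lie}(\Lieg)/Z_{\Lie}(\Lieg)={\ze}_{i}^{\Lie}(\Lieg/Z_{\Lie}(\Lieg))$, transports this through $\eta$ to obtain $\Lieg/{\ze}_{i+1}^{\Lie}(\Lieg)\cong\Lieq/{\ze}_{i+1}^{\Lie}(\Lieq)$, and then invokes Theorem~\ref{2.4} to read off the class. Your route via $\gamma_i^{\Lie}(\Lieg/Z_{\Lie}(\Lieg))=\pi(\gamma_i^{\Lie}(\Lieg))$ and the equivalence $\gamma_i^{\Lie}(\Lieg/Z_{\Lie}(\Lieg))=0\Leftrightarrow\gamma_{i+1}^{\Lie}(\Lieg)=0$ is the dual computation and has the minor advantage of not needing Theorem~\ref{2.4}; on the other hand, the paper avoids your discussion of how surjections interact with the generated ideal $[\,-\,,\,-\,]_{\Lie}$. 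Both arguments share the same harmless boundary issue at $c=1$ (nothing in the $\Lie$-isoclinism data forces $\Lieq\neq 0$ when $\Lieg/Z_{\Lie}(\Lieg)=0$), which you flag explicitly and the paper leaves implicit.
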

\begin{proof}
Notice that for all $g\in\Lieg$ and $x_1,x_2,\ldots,x_i\in\Lieg,$ and setting $\bar{t}:=t+Z_{\Lie}(\Lieg),$ $ ~t=g,x_1,x_2,\ldots, x_i,$ we have   $$[[[\bar{g},\bar{x}_1]_{lie},\bar{x}_2]_{lie},\ldots,\bar{x}_i]_{lie}=[[[g,{x}_1]_{lie},{x}_2]_{lie},\ldots,{x}_i]_{lie}+Z_{\Lie}(\Lieg).$$
  So $g\in {\ze}_{i+1}^{\Lie}(\Lieg)$ $\iff$ $g+Z_{\Lie}(\Lieg)\in {\ze}_{i}^{\Lie}(\Lieg/Z_{\Lie}(\Lieg)).$ Therefore ${\ze}_{i+1}^{\Lie}(\Lieg)/Z_{\Lie}(\Lieg)={\ze}_{i}^{\Lie}(\Lieg/Z_{\Lie}(\Lieg)).$    If $(\eta,\xi)$ is the \Lie-isoclinism between $\Lieg$ and $\Lieq,$ we have as  $\eta$ is an isomorphism,$$\eta({\ze}_{i+1}^{\Lie}(\Lieg)/Z_{\Lie}(\Lieg))=\eta({\ze}_{i}^{\Lie}(\Lieg/Z_{\Lie}(\Lieg)))={\ze}_{i}^{\Lie}(\Lieq/Z_{\Lie}(\Lieq)).$$ It follows that $$\Lieg/{\ze}_{i+1}^{\Lie}(\Lieg)\cong\frac{\Lieg/Z_{\Lie}(\Lieg)}{{\ze}_{i+1}^{\Lie}(\Lieg)/Z_{\Lie}(\Lieg)}\cong\frac{\Lieq/Z_{\Lie}(\Lieq)}{{\ze}_{i+1}^{\Lie}(\Lieq)/Z_{\Lie}(\Lieq)}\cong \Lieq/{\ze}_{i+1}^{\Lie}(\Lieq).$$
Now, assume that $\Lieg$ is \Lie-nilpotent of class $c.$ Then ${\ze}_{c}^{\Lie}(\Lieg)=\Lieg.$ So $\Lieq/{\ze}_{c}^{\Lie}(\Lieq)\cong \Lieg/{\ze}_{c}^{\Lie}(\Lieg)=0,$ implying that ${\ze}_{c}^{\Lie}(\Lieq)=\Lieq.$ Also $\Lieg/{\ze}_{c-1}^{\Lie}(\Lieg)\neq 0$ $\iff$ $\Lieq/{\ze}_{c-1}^{\Lie}(\Lieq)\neq 0.$ Hence $\Lieq$ is also Lie-nilpotent of class $c.$
\end{proof}

\begin{Co} \label{DT2}
Let $\Lieq$ be a $\Lie$-nilpotent Leibniz algebra of class 2. Then $\Der_z^{\Lie}(\Lieq)$ has a central subalgebra isomorphic to  $T\left(\frac{\Lieq}{Z_{\Lie}(\Lieq)}, [\Lieq, \Lieq]_{\Lie}\right)$ containing $(R+L)(\Lieq)$.
\end{Co}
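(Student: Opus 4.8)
The plan is to upgrade the central subalgebra furnished by Corollary~\ref{TZ} using the class-$2$ hypothesis, and then to locate $(R+L)(\Lieq)$ inside it.

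First I would apply Corollary~\ref{TZ}: it yields a $\Lie$-stem Leibniz algebra $\Lieg$ which is $\Lie$-isoclinic to $\Lieq$, say via $(\eta,\xi)$ with isomorphisms $\eta : \Lieg/Z_{\Lie}(\Lieg) \to \Lieq/Z_{\Lie}(\Lieq)$ and $\xi : [\Lieg,\Lieg]_{\Lie} \to [\Lieq,\Lieq]_{\Lie}$, together with the central subalgebra $\Lien = \{ d^{*} \mid d \in \Der_z^{\Lie}(\Lieg) \}$ of $\Der_z^{\Lie}(\Lieq)$, which Lemma~\ref{DT} identifies with $T\left(\frac{\Lieg}{[\Lieg,\Lieg]_{\Lie}}, Z_{\Lie}(\Lieg)\right)$. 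By Lemma~\ref{gNilq}, $\Lieg$ is again $\Lie$-nilpotent of class $2$, so $\gamma_3^{\Lie}(\Lieg) = [[\Lieg,\Lieg]_{\Lie}, \Lieg]_{\Lie} = 0$, whence $[\Lieg,\Lieg]_{\Lie} \subseteq Z_{\Lie}(\Lieg)$; since $\Lieg$ is $\Lie$-stem the reverse inclusion holds too, so $Z_{\Lie}(\Lieg) = [\Lieg,\Lieg]_{\Lie}$ (and the same computation on $\Lieq$ gives $[\Lieq,\Lieq]_{\Lie} \subseteq Z_{\Lie}(\Lieq)$, which will be used below). Hence $T\left(\frac{\Lieg}{[\Lieg,\Lieg]_{\Lie}}, Z_{\Lie}(\Lieg)\right) = T\left(\frac{\Lieg}{Z_{\Lie}(\Lieg)}, [\Lieg,\Lieg]_{\Lie}\right)$, and $f \mapsto \xi \circ f \circ \eta^{-1}$ is an isomorphism of Lie algebras from this onto $T\left(\frac{\Lieq}{Z_{\Lie}(\Lieq)}, [\Lieq,\Lieq]_{\Lie}\right)$: it is a linear bijection, and it respects the bracket of $T(-,-)$ because $\xi$ is an algebra homomorphism (one may also note $[\Lieq,\Lieq]_{\Lie} = \Lieq^{\rm ann}$ is an abelian ideal, so both sides are abelian). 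Thus $\Lien$ is a central subalgebra of $\Der_z^{\Lie}(\Lieq)$ isomorphic to $T\left(\frac{\Lieq}{Z_{\Lie}(\Lieq)}, [\Lieq,\Lieq]_{\Lie}\right)$.

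It then remains to show $R_{q} + L_{q} \in \Lien$ for every $q \in \Lieq$. First, for $g \in \Lieg$ the image of $R_{g} + L_{g}$ consists of the elements $[z,g]_{lie} \in [\Lieg,\Lieg]_{\Lie} = Z_{\Lie}(\Lieg)$, and $R_{g} + L_{g}$ satisfies the $\Lie$-Leibniz identity because the obstruction to it is a sum of elements of $\gamma_3^{\Lie}(\Lieg) = 0$ (this is exactly where $\Lie$-nilpotency of class $2$ enters); hence $R_{g} + L_{g} \in \Der_z^{\Lie}(\Lieg)$ and $(R_{g} + L_{g})^{*}$ is defined. Unwinding the definition of $d \mapsto d^{*}$ from the proof of Proposition~\ref{mono} together with the commutativity of the $\Lie$-commutator square~\eqref{square isoclinic}, one finds, for $q \in \Lieq$ with $q + Z_{\Lie}(\Lieq) = \eta(x + Z_{\Lie}(\Lieg))$, that $(R_{g} + L_{g})^{*}(q) = \xi([x,g]_{lie}) = [q, q_{0}]_{lie}$, where $q_{0}$ is any element of $\Lieq$ with $q_{0} + Z_{\Lie}(\Lieq) = \eta(g + Z_{\Lie}(\Lieg))$ (the right-hand side is independent of the representatives $q$ and $q_{0}$, since $Z_{\Lie}(\Lieq)$ $\Lie$-commutes with everything). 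Therefore $(R_{g} + L_{g})^{*} = R_{q_{0}} + L_{q_{0}}$; as $g$ runs over $\Lieg$ the coset $q_{0} + Z_{\Lie}(\Lieq)$ runs over all of $\Lieq/Z_{\Lie}(\Lieq)$ because $\eta$ is onto, and since $R_{q_{0}} + L_{q_{0}}$ depends only on that coset we conclude $(R+L)(\Lieq) = \{ (R_{g} + L_{g})^{*} \mid g \in \Lieg \} \subseteq \Lien$, which finishes the argument.

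The principal obstacle I anticipate is this last step: one must track faithfully the (somewhat intricate) definition of $d \mapsto d^{*}$ and the way the isoclinism square turns $\xi$ of a $\Lie$-commutator in $\Lieg$ into the corresponding $\Lie$-commutator in $\Lieq$, together with the small verification that $R_{g} + L_{g}$ is genuinely a $\Lie$-derivation; the rest is bookkeeping. A more self-contained alternative avoids $\Lieg$ entirely: put $\psi : T\left(\frac{\Lieq}{Z_{\Lie}(\Lieq)}, [\Lieq,\Lieq]_{\Lie}\right) \to \Der_z^{\Lie}(\Lieq)$, $\psi(f) = f \circ \pi$ with $\pi : \Lieq \to \Lieq/Z_{\Lie}(\Lieq)$ the projection; using $[\Lieq,\Lieq]_{\Lie} \subseteq Z_{\Lie}(\Lieq)$ one checks directly that $\psi$ is a monomorphism of Lie algebras with central image, and that $R_{q} + L_{q} = \psi(f_{q})$ for $f_{q}(x + Z_{\Lie}(\Lieq)) = [x,q]_{lie}$, which is well defined because $[q',q]_{lie} = 0$ whenever $q' \in Z_{\Lie}(\Lieq)$.
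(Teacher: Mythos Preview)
Your argument is correct, and the first half (through the isomorphism $\Lien \cong T\!\left(\frac{\Lieq}{Z_{\Lie}(\Lieq)}, [\Lieq,\Lieq]_{\Lie}\right)$) is exactly the paper's route: invoke Corollary~\ref{TZ}, transfer the class-$2$ condition along the isoclinism via Lemma~\ref{gNilq}, and use $Z_{\Lie}(\Lieg)=[\Lieg,\Lieg]_{\Lie}$ together with $\eta,\xi$ to rewrite $T\!\left(\frac{\Lieg}{[\Lieg,\Lieg]_{\Lie}}, Z_{\Lie}(\Lieg)\right)$ in terms of $\Lieq$.

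For the containment $(R+L)(\Lieq)\subseteq \Lien$, however, you and the paper diverge. The paper does not pass through $\Lieg$ at all here: it simply writes down the injection $\zeta:\frac{\Lieq}{Z_{\Lie}(\Lieq)}\to T\!\left(\frac{\Lieq}{Z_{\Lie}(\Lieq)}, [\Lieq,\Lieq]_{\Lie}\right)$, $\zeta(\overline{x})(\overline{y})=[x,y]_{lie}$, and observes that $\zeta(\overline{x})$ corresponds to $R_x+L_x$, so that $(R+L)(\Lieq)=\Image(\zeta)$ sits inside $T(\ldots)\cong\Lien$. Your main argument instead stays inside $\Der_z^{\Lie}(\Lieq)$ and shows directly that every $R_{q_0}+L_{q_0}$ is a $d^{*}$: you check $R_g+L_g\in\Der_z^{\Lie}(\Lieg)$ (using $\gamma_3^{\Lie}(\Lieg)=0$) and then compute $(R_g+L_g)^{*}=R_{q_0}+L_{q_0}$ via the isoclinism square. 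This is a bit longer but has the virtue of literally exhibiting $(R+L)(\Lieq)\subseteq\Lien$ as subsets of $\Der_z^{\Lie}(\Lieq)$, whereas the paper's phrasing leaves the identification of $\Image(\zeta)$ with a subset of $\Lien$ implicit. Your ``self-contained alternative'' with $\psi(f)=f\circ\pi$ is essentially the paper's $\zeta$ read the other way; that route is the shortest path to the inclusion.
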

\begin{proof}
 By  \cite[Corollary 4.1]{BC2}, there is a $\Lie$-stem Leibniz algebra $\Lieg$ \Lie-isoclinic to $\Lieq.$  Denote this \Lie-isoclinism by $(\eta,\xi).$ Since $\Lieq$ is $\Lie$-nilpotent Leibniz algebra of class 2, so is $\Lieg,$ thanks to Lemma \ref{gNilq}. Then $ Z_{\Lie}(\Lieg)=[\Lieg,\Lieg]_{\Lie}\stackrel{\xi}\cong  [\Lieq,\Lieq]_{\Lie},$ and $\frac{\Lieg}{[\Lieg,\Lieg]_{\Lie}}\cong\frac{\Lieg}{Z_{\Lie}(\Lieg)}\stackrel{\eta}\cong\frac{\Lieq}{Z_{\Lie}(\Lieq)}.$
 So $T\left(\frac{\Lieg}{  [\Lieg,\Lieg]_{\Lie}},  Z_{\Lie}(\Lieg)\right)\cong T\left(\frac{\Lieq}{Z_{\Lie}(\Lieq)}, [\Lieq, \Lieq]_{\Lie}\right).$ Therefore $\Der_z^{\Lie}(\Lieq)$ has a central subalgebra $\Lien$ isomorphic to  $T\left(\frac{\Lieq}{Z_{\Lie}(\Lieq)}, [\Lieq, \Lieq]_{\Lie}\right)$,  thanks to Corollary \ref{TZ}. Moreover, the map $\zeta: \frac{\Lieq}{Z_{\Lie}(\Lieq)}\to T\left(\frac{\Lieq}{Z_{\Lie}(\Lieq)}, [\Lieq, \Lieq]_{\Lie}\right)$ defined by $x+Z_{\Lie}(\Lieq)\mapsto \zeta(x+Z_{\Lie}(\Lieq)):\frac{\Lieq}{Z_{\Lie}(\Lieq)}\to [\Lieq, \Lieq]_{\Lie}$ with $\zeta(x+Z_{\Lie}(\Lieq))(y+Z_{\Lie}(\Lieq))=[x,y]_{lie},$ is a well-defined one-to-one linear map, since for all $x,x'\in \Lieq,$
 $$\begin{aligned}
 x-x'\in Z_{\Lie}(\Lieq)& \iff [x-x',y]_{lie}=0~~\mbox{for all}~y\in\Lieq \\&\iff[x,y]_{lie}=[x',y]_{lie}~~\mbox{for all}~y\in\Lieq \\& \iff \zeta(\overline{x})(y+Z_{\Lie}(\Lieq))=\zeta(\overline{x'})(y+Z_{\Lie}(\Lieq))~~\mbox{for all}~y\in\Lieq \\&\iff \zeta(\overline{x})=\zeta(\overline{x'}).
     \end{aligned}$$
Here we are using the notation $\overline{x} = x + Z_{\Lie}(\Lieq)$.

 $(R+L)(\Lieq) = \Image(\zeta) \subseteq T\left(\frac{\Lieq}{Z_{\Lie}(\Lieq)}, [\Lieq, \Lieq]_{\Lie}\right)$ since $\zeta(\overline{x})(\overline{y}) = [x,y]_{lie} = [x, y] + [y,x] = L_x(y)+ R_x(y)$.
\end{proof}
\bigskip

For any Leibniz algebra $\Lieg$  with $\gamma_2^{\Lie}(\Lieg)$ abelian, we denote  $$K(\Lieg) := \bigcap \Ker \left( f : \Lieg \to \gamma_2^{\Lie}(\Lieg) \right)$$

\begin{Le} \label{gammaK}
Let $\Lieq$  be  a $\Lie$-nilpotent Leibniz algebra of class 2. Then $\gamma_2^{\Lie}(\Lieq) = K(\Lieq)$.
\end{Le}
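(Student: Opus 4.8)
The plan is to prove the two inclusions $\gamma_2^{\Lie}(\Lieq) \subseteq K(\Lieq)$ and $K(\Lieq) \subseteq \gamma_2^{\Lie}(\Lieq)$ separately. For the first inclusion, recall that $K(\Lieq)$ is the intersection of the kernels of all linear maps $f : \Lieq \to \gamma_2^{\Lie}(\Lieq)$. Since $\Lieq$ is $\Lie$-nilpotent of class $2$, we have $\gamma_3^{\Lie}(\Lieq) = [\gamma_2^{\Lie}(\Lieq), \Lieq]_{\Lie} = 0$, so $\gamma_2^{\Lie}(\Lieq) \subseteq Z_{\Lie}(\Lieq)$ and in particular $\gamma_2^{\Lie}(\Lieq)$ is abelian, so $K(\Lieq)$ is defined. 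I would show any such $f$ vanishes on $\gamma_2^{\Lie}(\Lieq)$: for $x, y \in \Lieq$, one needs $f([x,y]_{lie}) = 0$. This is where the $\Lie$-derivation-type identities come in — but note $f$ is only assumed linear, so the argument cannot use a Leibniz-type rule for $f$ directly. Instead, the key point should be that $\gamma_2^{\Lie}(\Lieq)$ is itself contained in $Z_{\Lie}(\Lieq)$, which lets one relate the image $f(\gamma_2^{\Lie}(\Lieq))$ to higher terms of the lower $\Lie$-central series; since $\gamma_2^{\Lie}(\Lieq)$ already has all $\Lie$-brackets with $\Lieq$ equal to zero, the standard argument (as used, e.g., for $\Lie$-central derivations annihilating $\gamma_2^{\Lie}$) forces $f$ to kill $\gamma_2^{\Lie}(\Lieq)$. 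Hence $\gamma_2^{\Lie}(\Lieq) \subseteq \Ker f$ for every such $f$, so $\gamma_2^{\Lie}(\Lieq) \subseteq K(\Lieq)$.

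For the reverse inclusion $K(\Lieq) \subseteq \gamma_2^{\Lie}(\Lieq)$, the strategy is contrapositive/linear-algebraic: if $v \in \Lieq \setminus \gamma_2^{\Lie}(\Lieq)$, I must exhibit a single linear map $f : \Lieq \to \gamma_2^{\Lie}(\Lieq)$ with $f(v) \neq 0$, showing $v \notin K(\Lieq)$. Since $v \notin \gamma_2^{\Lie}(\Lieq)$, the image $\bar v$ of $v$ in $\Lieq/\gamma_2^{\Lie}(\Lieq)$ is nonzero; extend $\{\bar v\}$ to a basis of $\Lieq/\gamma_2^{\Lie}(\Lieq)$ and let $p : \Lieq \to \mathbb{K}$ be the linear functional picking out the $\bar v$-coordinate (so $p(v) = 1$ and $p(\gamma_2^{\Lie}(\Lieq)) = 0$). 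Because $\gamma_2^{\Lie}(\Lieq) \neq 0$ (this holds since $\Lieq$ is $\Lie$-nilpotent of class exactly $2$, so $\gamma_2^{\Lie}(\Lieq) \neq 0$), pick $0 \neq w \in \gamma_2^{\Lie}(\Lieq)$ and define $f := p(-)\, w : \Lieq \to \gamma_2^{\Lie}(\Lieq)$. Then $f$ is linear with image in $\gamma_2^{\Lie}(\Lieq)$ and $f(v) = w \neq 0$, so $v \notin K(\Lieq)$. This gives $K(\Lieq) \subseteq \gamma_2^{\Lie}(\Lieq)$, and combining the two inclusions completes the proof.

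The main obstacle I anticipate is the first inclusion — specifically, justifying carefully that an arbitrary linear (not derivation!) map $f : \Lieq \to \gamma_2^{\Lie}(\Lieq)$ annihilates $\gamma_2^{\Lie}(\Lieq)$. The subtlety is that "linear" alone is far too weak to force this; the definition of $K(\Lieq)$ as written must be read in context, and I expect the intended reading is that the $f$ in question range over $\Lie$-derivations valued in $\gamma_2^{\Lie}(\Lieq)$ (i.e., the relevant maps are those appearing in the definition of ${\sf ID}_*$-$\Lie$-derivations, which by construction vanish on $\Lie$-central elements and in particular on $\gamma_2^{\Lie}(\Lieq) \subseteq Z_{\Lie}(\Lieq)$). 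Under that reading the first inclusion is immediate from the fact established earlier in the text that every $\Lie$-central derivation annihilates $\gamma_2^{\Lie}(\Lieq)$, together with $\gamma_2^{\Lie}(\Lieq) \subseteq Z_{\Lie}(\Lieq)$. I would therefore begin the write-up by pinning down exactly which class of maps $f$ is meant, then dispatch both inclusions as above, the nontrivial content being entirely in the second (linear-algebraic) inclusion.
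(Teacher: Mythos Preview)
The central gap in your proposal is that you have misidentified the class of maps $f$ in the definition of $K(\Lieq)$: the paper intends \emph{Leibniz algebra homomorphisms} $f:\Lieq\to\gamma_2^{\Lie}(\Lieq)$, not arbitrary linear maps and not $\Lie$-derivations. This dissolves your worry about the first inclusion immediately: for any Leibniz homomorphism $f$ one has
\[
f([q_1,q_2]_{lie})=[f(q_1),f(q_2)]_{lie}\in[\gamma_2^{\Lie}(\Lieq),\gamma_2^{\Lie}(\Lieq)]_{\Lie}\subseteq\gamma_3^{\Lie}(\Lieq)=0,
\]
so $\gamma_2^{\Lie}(\Lieq)\subseteq\Ker f$ for every such $f$. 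Your suggested reading ``$\Lie$-derivations'' is not what is meant, and under the ``all linear maps'' reading one would get $K(\Lieq)=0$ whenever $\gamma_2^{\Lie}(\Lieq)\neq 0$, contradicting the lemma.

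For the reverse inclusion your idea is close in spirit to the paper's, but because the $f$ must be Leibniz homomorphisms you cannot stop at producing a linear map $f=p(-)\,w$; you must verify that it respects the bracket. The paper argues by contradiction: given $x\in K(\Lieq)\setminus\gamma_2^{\Lie}(\Lieq)$ it takes a vector-space complement $\Lieh$ of $\langle x\rangle$, observes that $\Lieh$ is a maximal subalgebra, and uses maximality together with $\gamma_3^{\Lie}(\Lieq)=0$ to rule out $\Lieh+\gamma_2^{\Lie}(\Lieq)=\Lieq$, forcing $\gamma_2^{\Lie}(\Lieq)\subseteq\Lieh$ and hence that $\Lieh$ is a two-sided ideal. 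The map $f(h+\alpha x)=\alpha q_0$ (for a fixed $0\neq q_0\in\gamma_2^{\Lie}(\Lieq)$) is then a Leibniz homomorphism with kernel $\Lieh$, and $x\notin\Lieh$ contradicts $x\in K(\Lieq)$. Your functional $p$ already factors through $\Lieq/\gamma_2^{\Lie}(\Lieq)$, so your kernel also contains $\gamma_2^{\Lie}(\Lieq)$ and the two constructions essentially coincide; what your write-up lacks is precisely the step showing that this kernel is an ideal, which is the content that upgrades your linear $f$ to a genuine Leibniz homomorphism.
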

\begin{proof}
 Let $f : \Lieq \to \gamma_2^{\Lie}(\Lieq) $ be a  homomorphism of Leibniz algebras. Then for all $q_1,q_2\in\Lieq,$ $f([q_1,q_2]_{lie})= [f(q_1),f(q_2)]_{lie}\in[\gamma_2^{\Lie}(\Lieq),\gamma_2^{\Lie}(\Lieq)]_{\Lie}\subseteq  \gamma_3^{\Lie}(\Lieq) =0$ as $\Lieq$ is $\Lie$-nilpotent of class 2. So $\gamma_2^{\Lie}(\Lieq) \subseteq \Ker(f).$ Therefore $\gamma_2^{\Lie}(\Lieq) \subseteq K(\Lieq)$ since $f$ is arbitrary.

 For the reverse inclusion, we proceed by contradiction. Let $x\in K(\Lieq)$ such that $x\notin \gamma_2^{\Lie}(\Lieq),$ and let $\Lieh$ be the complement of $\left\langle x\right\rangle$ in $\Lieq.$ Then $\Lieh$ is a maximal subalgebra of $\Lieq.$ So  either $\Lieh+\gamma_2^{\Lie}(\Lieq) =\Lieh$ or $\Lieh+\gamma_2^{\Lie}(\Lieq) =\Lieq.$ The latter is not possible. Indeed, if $\Lieh+\gamma_2^{\Lie}(\Lieq) =\Lieq$ then  $\gamma_2^{\Lie}(\Lieq)=\gamma_2^{\Lie}(\Lieh+\gamma_2^{\Lie}(\Lieq) )\subseteq \gamma_2^{\Lie}(\Lieh)+\gamma_3^{\Lie}(\Lieq).$ But since $\Lieq$  is  a $\Lie$-nilpotent Leibniz algebra of class 2,  then $\gamma_3^{\Lie}(\Lieq)=0,$ which implies that $\gamma_2^{\Lie}(\Lieq)=\gamma_2^{\Lie}(\Lieh),$ and thus $\Lieq=\Lieh+\gamma_2^{\Lie}(\Lieq) =\Lieh+\gamma_2^{\Lie}(\Lieh)=\Lieh.$ A contradiction. So we have $\Lieh+\gamma_2^{\Lie}(\Lieq)=\Lieh,$  and thus $\gamma_2^{\Lie}(\Lieq)\subseteq \Lieh,$ which implies that  $\Lieh$ is a two-sided ideal of $\Lieq.$  Now, choose $q_0\in \gamma_2^{\Lie}(\Lieq) $ and consider the mapping $ f : \Lieq \to \gamma_2^{\Lie}(\Lieq)$ defined by $h+\alpha x\mapsto \alpha q_0.$ Clearly, $f$ is a well-defined homomorphism of Leibniz algebras, and $\Ker(f)=\Lieh.$ This is a contradiction since $x\in K(\Lieq)$ and $x\notin\Lieh.$ Thus  $K(\Lieq) \subseteq \gamma_2^{\Lie}(\Lieq) .$
\end{proof}

\begin{Th}\label{3.10}
Let $\Lieq$  be  a $\Lie$-nilpotent Leibniz algebra of class 2. Then $$Z\left( \Der_z ^{\Lie} (\Lieq) \right) \cong T\left(\frac{\Lieq}{Z_{\Lie}(\Lieq)}, [\Lieq, \Lieq]_{\Lie}\right).$$
\end{Th}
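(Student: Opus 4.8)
The plan is to show that $Z\!\left(\Der_z^{\Lie}(\Lieq)\right)$ coincides with the subalgebra
\[
S=\{\, d\in\Der_z^{\Lie}(\Lieq)\ \mid\ d(\Lieq)\subseteq[\Lieq,\Lieq]_{\Lie}\ \text{ and }\ d(Z_{\Lie}(\Lieq))=0 \,\},
\]
and that $S$ is isomorphic, as a Lie algebra, to $T\!\left(\frac{\Lieq}{Z_{\Lie}(\Lieq)},[\Lieq,\Lieq]_{\Lie}\right)$. I will use two facts valid because $\Lieq$ is $\Lie$-nilpotent of class $2$: that $[\Lieq,\Lieq]_{\Lie}=\gamma_2^{\Lie}(\Lieq)\subseteq Z_{\Lie}(\Lieq)$ (since $\gamma_3^{\Lie}(\Lieq)=0$), and that a linear map $d\colon\Lieq\to\Lieq$ lies in $\Der_z^{\Lie}(\Lieq)$ exactly when $d(\Lieq)\subseteq Z_{\Lie}(\Lieq)$ and $d$ vanishes on $[\Lieq,\Lieq]_{\Lie}$ (the $\Lie$-derivation identity being automatic once $d(x),d(y)\in Z_{\Lie}(\Lieq)$, and every $\Lie$-central derivation annihilating $[\Lieq,\Lieq]_{\Lie}$). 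Granting this, each $d\in S$ factors as $\Lieq\twoheadrightarrow\Lieq/Z_{\Lie}(\Lieq)\to[\Lieq,\Lieq]_{\Lie}$, giving a vector-space isomorphism $S\cong T\!\left(\frac{\Lieq}{Z_{\Lie}(\Lieq)},[\Lieq,\Lieq]_{\Lie}\right)$; it is an isomorphism of Lie algebras because both sides are abelian — $S$ because $d_1\circ d_2=0$ for $d_1,d_2\in S$ (as $d_2(\Lieq)\subseteq[\Lieq,\Lieq]_{\Lie}\subseteq Z_{\Lie}(\Lieq)$ is killed by $d_1$), and $T\!\left(-,[\Lieq,\Lieq]_{\Lie}\right)$ because $[\Lieq,\Lieq]_{\Lie}$ has trivial bracket, a routine consequence of $\gamma_3^{\Lie}(\Lieq)=0$. (This $S$ is in fact the central subalgebra exhibited in Corollary \ref{DT2}.) So the theorem reduces to the identity $Z\!\left(\Der_z^{\Lie}(\Lieq)\right)=S$.

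The inclusion $S\subseteq Z\!\left(\Der_z^{\Lie}(\Lieq)\right)$ is straightforward: for $d\in S$ and any $d'\in\Der_z^{\Lie}(\Lieq)$ we have $d'\circ d=0$ (because $d(\Lieq)\subseteq[\Lieq,\Lieq]_{\Lie}$ and $d'$ kills $[\Lieq,\Lieq]_{\Lie}$) and $d\circ d'=0$ (because $d'(\Lieq)\subseteq Z_{\Lie}(\Lieq)$ and $d$ kills $Z_{\Lie}(\Lieq)$), so $[d,d']=0$.

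It remains to prove $Z\!\left(\Der_z^{\Lie}(\Lieq)\right)\subseteq S$, which I would do by verifying the two defining conditions of $S$ for a fixed $d\in Z\!\left(\Der_z^{\Lie}(\Lieq)\right)$. For $d(\Lieq)\subseteq[\Lieq,\Lieq]_{\Lie}$: the observation to make is that \emph{any} homomorphism of Leibniz algebras $f\colon\Lieq\to[\Lieq,\Lieq]_{\Lie}$ is itself a $\Lie$-central derivation of $\Lieq$ — its image lies in $[\Lieq,\Lieq]_{\Lie}\subseteq Z_{\Lie}(\Lieq)$, and both $f([x,y]_{lie})=[f(x),f(y)]_{lie}$ and $[f(x),y]_{lie}+[x,f(y)]_{lie}$ vanish since $f(x),f(y)\in[\Lieq,\Lieq]_{\Lie}\subseteq Z_{\Lie}(\Lieq)$; hence $f\in\Der_z^{\Lie}(\Lieq)$. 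Centrality of $d$ then gives $[d,f]=0$, while $d\circ f=0$ (because $f(\Lieq)\subseteq[\Lieq,\Lieq]_{\Lie}$ and $d$ kills $[\Lieq,\Lieq]_{\Lie}$), so $f\circ d=0$, i.e. $d(\Lieq)\subseteq\Ker f$. As $f$ runs over all homomorphisms $\Lieq\to[\Lieq,\Lieq]_{\Lie}=\gamma_2^{\Lie}(\Lieq)$, this yields $d(\Lieq)\subseteq K(\Lieq)=\gamma_2^{\Lie}(\Lieq)=[\Lieq,\Lieq]_{\Lie}$ by Lemma \ref{gammaK}. For $d(Z_{\Lie}(\Lieq))=0$: given $z\in Z_{\Lie}(\Lieq)$, choose $v\in\Lieq\setminus[\Lieq,\Lieq]_{\Lie}$ (possible since $\Lieq$ is $\Lie$-nilpotent and nonzero) and a linear form $\varphi\colon\Lieq\to\mathbb{K}$ with $\varphi([\Lieq,\Lieq]_{\Lie})=0$ and $\varphi(v)=1$; then $d'\colon x\mapsto\varphi(x)\,z$ lies in $\Der_z^{\Lie}(\Lieq)$ and $z=d'(v)$. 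Using $d(\Lieq)\subseteq[\Lieq,\Lieq]_{\Lie}$ (just proved) we get $d'\circ d=0$, so $[d,d']=0$ forces $d\circ d'=0$, whence $d(z)=d(d'(v))=0$. Thus $d\in S$ and the theorem follows. The crux is this last inclusion — specifically spotting that homomorphisms valued in $[\Lieq,\Lieq]_{\Lie}$ are genuine $\Lie$-central derivations, so that the centrality hypothesis can be tested against them, after which Lemma \ref{gammaK} converts "$d$ is annihilated by all of them" into "$d$ takes values in $[\Lieq,\Lieq]_{\Lie}$".
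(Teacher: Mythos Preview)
Your proof is correct, and the crucial insight---that Leibniz homomorphisms $f\colon\Lieq\to[\Lieq,\Lieq]_{\Lie}$ are themselves elements of $\Der_z^{\Lie}(\Lieq)$, so a central $d$ must commute with them, whence Lemma~\ref{gammaK} forces $d(\Lieq)\subseteq[\Lieq,\Lieq]_{\Lie}$---is exactly the device the paper uses. Where you diverge is in the scaffolding around that step. The paper identifies the candidate subalgebra indirectly, as $\Lien=\{d^{*}\mid d\in\Der_z^{\Lie}(\Lieg)\}$ for a $\Lie$-stem algebra $\Lieg$ $\Lie$-isoclinic to $\Lieq$, invoking Corollary~\ref{DT2} (hence Corollary~\ref{TZ}, Proposition~\ref{mono}, Lemma~\ref{DT}, and the existence of $\Lie$-stem representatives) to know that $\Lien$ is central and isomorphic to $T(\Lieq/Z_{\Lie}(\Lieq),[\Lieq,\Lieq]_{\Lie})$; it then finishes by pulling a given central $T$ back to some $d\in\Der_z^{\Lie}(\Lieg)$ via the isoclinism. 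Your route bypasses all of the isoclinism machinery: you describe the subalgebra intrinsically as $S=\{d\in\Der_z^{\Lie}(\Lieq)\mid d(\Lieq)\subseteq[\Lieq,\Lieq]_{\Lie},\ d(Z_{\Lie}(\Lieq))=0\}$ and verify the two defining conditions for a central $d$ by hand. This is more self-contained, and it surfaces explicitly a point the paper's proof glosses over---namely that a central $d$ kills $Z_{\Lie}(\Lieq)$, which is precisely what one needs for the map $x\mapsto\xi^{-1}(T(y))$ there to be well-defined. The paper's version has the virtue of tying the theorem into the isoclinism framework developed earlier in the section; yours is shorter and needs no auxiliary algebra $\Lieg$. (Your aside that $[\Lieq,\Lieq]_{\Lie}$ has trivial bracket is true in any Leibniz algebra, since $[\Lieq,\Lieq]_{\Lie}\subseteq\Lieq^{\rm ann}\subseteq Z^r(\Lieq)$; class~2 is not needed for that particular claim.)
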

\begin{proof}
 By the proof of Corollary \ref{DT2}, $\Der_ z^{\Lie}(\Lieq)$ has a central subalgebra  $\Lien$ isomorphic to  $T\left(\frac{\Lieq}{Z_{\Lie}(\Lieq)}, [\Lieq, \Lieq]_{\Lie}\right),$ where  $ \Lien:=\{d^*~|~d\in \Der_z^{\Lie}(\Lieg)\}$ for some $\Lie$-stem Leibniz algebra $\Lieg$ \Lie-isoclinic to $\Lieq.$  Denote this \Lie-isoclinism by $(\eta,\xi).$

It remains to show that $Z\left( \Der_z^{\Lie} (\Lieq) \right)\subseteq\Lien,$ that is, given $T\in Z\left( \Der_ z^{\Lie} (\Lieq) \right),$ we must find $d\in \Der_z^{\Lie}(\Lieg)$ such that $T=d^*.$

 First, we claim that $T(\Lieq)\subseteq K(\Lieq).$ Indeed,  let
$f : \Lieq \to [\Lieq, \Lieq]_{\Lie}$ be a  homomorphism of Leibniz algebras. Then consider the mapping $t_f:\Lieq\to\Lieq$ defined by $t_f(z)=f(z).$ Clearly, $t_f\in \Der_z^{\Lie} (\Lieq) $ since  $t_f(\Lieq)\subseteq[\Lieq,\Lieq]_{\Lie}=Z_{\Lie}(\Lieq)$ as $\Lieq$  is  a $\Lie$-nilpotent Leibniz algebra of class 2. So $[T,t_f]=0$ and thus $f(T(z))=t_f(T(z))=T(t_f(z))=0$ for all $z\in\Lieq$ since $t_f(z)\in [\Lieq,\Lieq]_{\Lie}$ and  $T([\Lieq,\Lieq]_{\Lie})=0$ as $T\in  \Der_z^{\Lie} (\Lieq).$ Therefore $T(\Lieq)\subseteq\Ker(f).$ Hence $T(\Lieq)\subseteq K(\Lieq)$ since $f$ is arbitrary, which proves the claim.

It follows by Lemma \ref{gammaK} that $T(\Lieq)\subseteq [\Lieq,\Lieq]_{\Lie}.$ Now, for any $x\in \Lieg,$ we have $x+Z_{\Lie}(\Lieg)=\eta^{-1}(y+Z_{\Lie}(\Lieq))$ for some $y\in \Lieq$, since $\eta$ is bijective. Consider the map $d: \Lieg\to\Lieg$ defined by $x\mapsto \xi^{-1}(T(y)).$ Clearly $d$ is well-defined, and it is easy to show that $d\in \Der_z^{\Lie}(\Lieg)$ since $T(\Lieq)\subseteq [\Lieq,\Lieq]_{\Lie}=Z_{\Lie}(\Lieq).$
Hence $T=d^*.$ This completes the proof.
\end{proof}

\begin{Co} \label{abelian der}
Let $\Lieq$ be a  finite-dimensional $\Lie$-nilpotent   Leibniz algebra of class 2. Then $\Der_z^{\Lie}(\Lieq)$ is abelian if and only if   $\gamma_2^{\Lie}(\Lieq) = Z_{\Lie}(\Lieq)$.
\end{Co}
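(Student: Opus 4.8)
The plan is to deduce Corollary \ref{abelian der} directly from Theorem \ref{3.10} together with the basic structural facts already established for $\Lie$-nilpotent Leibniz algebras of class $2$. The starting point is the observation that for such a $\Lieq$ one has $\gamma_2^{\Lie}(\Lieq) \subseteq Z_{\Lie}(\Lieq)$ always (this is what class $2$ means, since $[\gamma_2^{\Lie}(\Lieq),\Lieq]_{\Lie} = \gamma_3^{\Lie}(\Lieq) = 0$), so the content of the equality $\gamma_2^{\Lie}(\Lieq) = Z_{\Lie}(\Lieq)$ is the reverse inclusion $Z_{\Lie}(\Lieq) \subseteq \gamma_2^{\Lie}(\Lieq)$, i.e. that $\Lieq$ is a $\Lie$-stem Leibniz algebra. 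Thus the statement to be proved is essentially: for $\Lieq$ finite-dimensional $\Lie$-nilpotent of class $2$, $\Der_z^{\Lie}(\Lieq)$ is abelian iff $\Lieq$ is $\Lie$-stem.

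First I would handle the forward implication via Theorem \ref{3.10}. If $\Der_z^{\Lie}(\Lieq)$ is abelian, then $\Der_z^{\Lie}(\Lieq) = Z(\Der_z^{\Lie}(\Lieq)) \cong T\!\left(\frac{\Lieq}{Z_{\Lie}(\Lieq)}, [\Lieq,\Lieq]_{\Lie}\right)$. On the other hand, Lemma \ref{DT} is not directly available here (it needs $\Lieq$ to be $\Lie$-stem), but a dimension count will do the job: by Corollary \ref{DT2}, $\Der_z^{\Lie}(\Lieq)$ contains a copy of $T\!\left(\frac{\Lieq}{Z_{\Lie}(\Lieq)}, [\Lieq,\Lieq]_{\Lie}\right)$ which has dimension $\dim\!\left(\frac{\Lieq}{Z_{\Lie}(\Lieq)}\right)\cdot \dim [\Lieq,\Lieq]_{\Lie}$, and one also has the inclusion $T\!\left(\frac{\Lieq}{\gamma_2^{\Lie}(\Lieq)}, Z_{\Lie}(\Lieq)\right) \hookrightarrow \Der_z^{\Lie}(\Lieq)$ exactly as in Lemma \ref{DT} (every linear map $\Lieq \to Z_{\Lie}(\Lieq)$ killing $\gamma_2^{\Lie}(\Lieq)$ is a $\Lie$-central derivation, since its image lies in $Z_{\Lie}(\Lieq)$ so both sides of the derivation identity vanish). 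Comparing $\dim \Der_z^{\Lie}(\Lieq)$ from the abelian hypothesis with the dimension of this second $T$-subalgebra forces $\dim\!\left(\frac{\Lieq}{Z_{\Lie}(\Lieq)}\right) \dim [\Lieq,\Lieq]_{\Lie} \geq \dim\!\left(\frac{\Lieq}{\gamma_2^{\Lie}(\Lieq)}\right)\dim Z_{\Lie}(\Lieq)$; since $\gamma_2^{\Lie}(\Lieq)\subseteq Z_{\Lie}(\Lieq)$ gives $\dim\frac{\Lieq}{\gamma_2^{\Lie}(\Lieq)} \geq \dim\frac{\Lieq}{Z_{\Lie}(\Lieq)}$ and $\dim Z_{\Lie}(\Lieq) \geq \dim[\Lieq,\Lieq]_{\Lie}$, the only way both can hold is if $\gamma_2^{\Lie}(\Lieq) = Z_{\Lie}(\Lieq)$ (both quantities are nonzero when $\gamma_2^{\Lie}(\Lieq)\neq 0$; the degenerate case $\gamma_2^{\Lie}(\Lieq)=0$ means $\Lieq$ is abelian-as-Lie, i.e.\ a Lie algebra structure, where $Z_{\Lie}(\Lieq)=\Lieq$ and one checks the statement directly or excludes it).

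For the converse, suppose $\gamma_2^{\Lie}(\Lieq) = Z_{\Lie}(\Lieq)$; then $Z_{\Lie}(\Lieq) = \gamma_2^{\Lie}(\Lieq) = [\Lieq,\Lieq]_{\Lie} \subseteq [\Lieq,\Lieq]_{\Lie}$, so $\Lieq$ is a $\Lie$-stem Leibniz algebra, and Proposition \ref{3.1} immediately gives that $\Der_z^{\Lie}(\Lieq)$ is abelian. So the converse is the easy direction.

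The main obstacle is the forward direction, and specifically making the dimension bookkeeping airtight: one must be careful that all $T$-spaces in play are genuinely \emph{subalgebras} of $\Der_z^{\Lie}(\Lieq)$ (Lemma \ref{DT}'s argument for the inclusion $T\!\left(\frac{\Lieq}{\gamma_2^{\Lie}(\Lieq)}, Z_{\Lie}(\Lieq)\right)\hookrightarrow\Der_z^{\Lie}(\Lieq)$ does not use the stem hypothesis and so applies verbatim), and that the two chains of inequalities $\dim\frac{\Lieq}{Z_{\Lie}(\Lieq)} \le \dim\frac{\Lieq}{\gamma_2^{\Lie}(\Lieq)}$ and $\dim[\Lieq,\Lieq]_{\Lie} \le \dim Z_{\Lie}(\Lieq)$ combine with the reverse product inequality coming from Theorem \ref{3.10} to force equality in both. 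Once these are pinned down, equality of dimensions of the two $T$-subalgebras together with $\gamma_2^{\Lie}(\Lieq)\subseteq Z_{\Lie}(\Lieq)$ yields $\gamma_2^{\Lie}(\Lieq) = Z_{\Lie}(\Lieq)$, completing the proof.
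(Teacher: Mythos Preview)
Your argument is correct. The dimension bookkeeping in the forward direction goes through exactly as you describe: the bijection $\Der_z^{\Lie}(\Lieq)\cong T\bigl(\Lieq/\gamma_2^{\Lie}(\Lieq),\,Z_{\Lie}(\Lieq)\bigr)$ holds for \emph{any} Leibniz algebra (the stem hypothesis in Lemma~\ref{DT} is only used to match the Lie brackets, not for the linear bijection), so combining this with Theorem~\ref{3.10} under the abelian hypothesis yields
\[
\dim\frac{\Lieq}{Z_{\Lie}(\Lieq)}\cdot\dim\gamma_2^{\Lie}(\Lieq)
\;=\;
\dim\frac{\Lieq}{\gamma_2^{\Lie}(\Lieq)}\cdot\dim Z_{\Lie}(\Lieq),
\]
and together with the two inequalities coming from $\gamma_2^{\Lie}(\Lieq)\subseteq Z_{\Lie}(\Lieq)$ (and the fact that for class exactly~$2$ all four factors are strictly positive), this forces $\gamma_2^{\Lie}(\Lieq)=Z_{\Lie}(\Lieq)$.

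That said, your route is more elaborate than necessary, and differs from the paper's. The paper simply invokes Proposition~\ref{3.2} in both directions: since $\Lieq$ is finite-dimensional $\Lie$-nilpotent with $\gamma_2^{\Lie}(\Lieq)\neq 0$, Proposition~\ref{3.2} says $\Der_z^{\Lie}(\Lieq)$ is abelian iff $\Lieq$ is $\Lie$-stem; and as you yourself observed at the outset, for class~$2$ the $\Lie$-stem condition is exactly $\gamma_2^{\Lie}(\Lieq)=Z_{\Lie}(\Lieq)$. So the corollary is really a one-line consequence of Proposition~\ref{3.2}. (The paper then appends a $T$-space comparison via Lemma~\ref{DT} and Theorem~\ref{3.10}, but that extra paragraph is redundant once Proposition~\ref{3.2} has been applied.) Your approach has the merit of bypassing Proposition~\ref{3.2} entirely and deriving everything from Theorem~\ref{3.10}; the paper's approach is shorter because Proposition~\ref{3.2} has already done the work.
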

\begin{proof}
 Assume that  $\gamma_2^{\Lie}(\Lieq) = Z_{\Lie}(\Lieq),$ then by Proposition \ref{3.2},  $\Der_z^{\Lie}(\Lieq)$ is abelian since $\Lieq$ is a \Lie-stem Leibniz algebra.
Conversely, suppose that
$\Der_z^{\Lie}(\Lieq)$ is an abelian Lie algebra. Then, again by  Proposition \ref{3.2}, $\Lieq$ is a \Lie-stem Leibniz algebra. This implies by Lemma \ref{DT} that $\Der_z^{\Lie}(\Lieq)\cong T\left(\frac{\Lieq}{\gamma_2^{\Lie}(\Lieq)}, Z_{\Lie}(\Lieq)\right).$  Also, by Theorem  \ref{3.10}, $\Der_z^{\Lie} (\Lieq) =Z\left( \Der_z^{\Lie} (\Lieq) \right) \cong T\left(\frac{\Lieq}{Z_{\Lie}(\Lieq)}, \gamma_2^{\Lie}(\Lieq)\right).$ It follows that    $T\left(\frac{\Lieq}{\gamma_2^{\Lie}(\Lieq)}, Z_{\Lie}(\Lieq)\right)\cong T\left(\frac{\Lieq}{Z_{\Lie}(\Lieq)}, \gamma_2^{\Lie}(\Lieq)\right).$  Now, let $K$ be the $\mathbb{K}$-vector subspace complement of   $Z_{\Lie}(\Lieq)$ in $\gamma_2^{\Lie}(\Lieq)$.  We claim that $K=0.$ Indeed, since as vector spaces $ Z_{\Lie}(\Lieq) \oplus K =\gamma_2^{\Lie}(\Lieq)$, then $$T\left(\frac{\Lieq}{Z_{\Lie}(\Lieq)}, \gamma_2^{\Lie}(\Lieq) \right)=T\left(\frac{\Lieq}{Z_{\Lie}(\Lieq)}, Z_{\Lie}(\Lieq) \right)\oplus T\left(\frac{\Lieq}{Z_{\Lie}(\Lieq)}, K\right).$$ As  $\frac{\Lieq}{Z_{\Lie}(\Lieq)} \twoheadrightarrow \frac{\Lieq}{\gamma_2^{\Lie}(\Lieq)}$ by the Snake Lemma, it follows that $T\left(\frac{\Lieq}{Z_{\Lie}(\Lieq)}, \gamma_2^{\Lie}(\Lieq)\right) \cong T\left(\frac{\Lieq}{\gamma_2^{\Lie}(\Lieq)}, Z_{\Lie}(\Lieq)\right)$  is isomorphic to a subalgebra of $T\left(\frac{\Lieq}{Z_{\Lie}(\Lieq)}, Z_{\Lie}(\Lieq) \right).$ Hence $T\left(\frac{\Lieq}{\gamma_2^{\Lie}(\Lieq)}, K\right)=0.$ This completes the proof.
\end{proof}

\begin{Ex} \label{3.12}
The following is an example of Leibniz algebra satisfying the requirements of Corollary \ref{abelian der}.

Let $\Lieq$ be the three-dimensional Leibniz algebra with basis $\{a_1, a_2, a_3 \}$ and bracket operation given by $[a_2, a_2] = [a_3,a_3] = a_1$ and zero elsewhere (see algebra 2 (c) in \cite{CILL}). It is easy to check that
 $\gamma_2^{\Lie}(\Lieq) = Z_{\Lie}(\Lieq) = < \{ a_1 \} >$.
\end{Ex}

\section{$\Lie$-Central derivations and $\Lie$-centroids} \label{centroids}

\begin{De}
The {\Lie}-centroid of a Leibniz algebra $\Lieg$ is the set of all linear maps $d : \Lieg \to \Lieg$ satisfying  the identities
$$d([x,y])_{lie} = [d(x), y]_{lie} = [x, d(y)]_{lie}$$
for all $x, y \in \Lieg$. We denote this set by $\Gamma^{\Lie}(\Lieg)$.
\end{De}

\begin{Pro} \label{intersection}
For any Leibniz algebra $\Lieg$, $\Gamma^{\Lie}(\Lieg)$ is a subalgebra of ${\sf End}({\Lieg})$ such that $\Der_z^{\Lie}(\Lieg) = \Der^{\Lie}(\Lieg) \cap \Gamma^{\Lie}(\Lieg)$.
\end{Pro}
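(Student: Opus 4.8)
The plan is to verify both assertions directly from the defining identities, with no appeal to deeper machinery. For the subalgebra claim, I would take $d_1, d_2 \in \Gamma^{\Lie}(\Lieg)$ and check that every $\mathbb{K}$-linear combination and the composite $d_1 \circ d_2$ again satisfy the three-term identity $d([x,y]_{lie}) = [d(x),y]_{lie} = [x,d(y)]_{lie}$. Linearity is immediate; for the composite one computes $(d_1 d_2)([x,y]_{lie}) = d_1\bigl(d_2([x,y]_{lie})\bigr) = d_1\bigl([d_2(x),y]_{lie}\bigr) = [(d_1 d_2)(x),y]_{lie}$, and symmetrically, by pushing $d_2$ into the second slot first, $(d_1 d_2)([x,y]_{lie}) = [x,(d_1 d_2)(y)]_{lie}$. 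Hence $\Gamma^{\Lie}(\Lieg)$ is closed under composition and linear combinations, i.e. a subalgebra of the associative algebra ${\sf End}(\Lieg)$ (in particular a Lie subalgebra for the commutator bracket).

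For the equality $\Der_z^{\Lie}(\Lieg) = \Der^{\Lie}(\Lieg) \cap \Gamma^{\Lie}(\Lieg)$ I would argue by double inclusion. If $d \in \Der_z^{\Lie}(\Lieg)$, then $d$ is by definition a $\Lie$-derivation, so $d \in \Der^{\Lie}(\Lieg)$; moreover $d(x), d(y) \in Z_{\Lie}(\Lieg)$, so $[d(x),y]_{lie} = 0$ and $[x,d(y)]_{lie} = 0$, and then the derivation identity forces $d([x,y]_{lie}) = [d(x),y]_{lie} + [x,d(y)]_{lie} = 0$ as well. Thus the three expressions in the $\Lie$-centroid identity all vanish and in particular agree, so $d \in \Gamma^{\Lie}(\Lieg)$.

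Conversely, let $d \in \Der^{\Lie}(\Lieg) \cap \Gamma^{\Lie}(\Lieg)$. Combining the derivation identity $d([x,y]_{lie}) = [d(x),y]_{lie} + [x,d(y)]_{lie}$ with the centroid identity $d([x,y]_{lie}) = [d(x),y]_{lie}$ yields $[x,d(y)]_{lie} = 0$ for all $x,y \in \Lieg$. Since $[-,-]_{lie}$ is symmetric ($[x,y]_{lie} = [x,y] + [y,x] = [y,x]_{lie}$), this says precisely that $d(y) \in Z_{\Lie}(\Lieg)$ for every $y$, i.e. $\Image(d) \subseteq Z_{\Lie}(\Lieg)$, so $d$ is a $\Lie$-central derivation. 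The only subtlety worth flagging is the use of symmetry of $[-,-]_{lie}$, which makes one-sided annihilation equivalent to membership in $Z_{\Lie}(\Lieg)$; apart from that the proof is a formal manipulation of the two defining identities, so I do not expect any genuine obstacle.
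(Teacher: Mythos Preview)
Your argument is correct and follows essentially the same route as the paper: both directions of the equality are obtained by subtracting the centroid identity from the $\Lie$-derivation identity to force $[d(x),y]_{lie}=0$ (or equivalently $[x,d(y)]_{lie}=0$), and conversely by observing that for $d\in\Der_z^{\Lie}(\Lieg)$ all three expressions in the centroid identity vanish. You additionally spell out the closure of $\Gamma^{\Lie}(\Lieg)$ under composition, which the paper leaves implicit.
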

\begin{proof}
Assume that $d\in \mathsf{Der}^{\Lie}(\mathfrak{g})\cap \Gamma^{\mathsf{Lie}}(\mathfrak{g})$. For all $x,y\in \mathfrak{g}$, we have that $d([x,y]_{lie})=[d(x),y]_{lie}+[x,d(y)]_{lie}$; on the other hand,
$d([x,y]_{lie})=[x,d(y)]_{lie}$, hence $[d(x),y]_{lie}=0$ for any $y \in \Lieg$, that is $d(x) \in Z_{\Lie}(\Lieg)$

Conversely,  $\mathsf{Der}_{z}^{\mathsf{Lie}}(\mathfrak{g})$ is a subalgebra of $\mathsf{Der}^{\Lie}(\mathfrak{g})$  and for any  $d\in $ $\mathsf{Der}_{z}^{\mathsf{Lie}}(\mathfrak{g}),$ we have
$d([x,y]_{lie})=[d(x),y]_{lie}+[x,d(y)]_{lie}=0$, since $[d(x),y]_{lie}= 0$, $[x,d(y)]_{lie} = 0$, for any $x, y \in \Lieg$, hence $d \in \Gamma^{\Lie}(\Lieg)$.
\end{proof}

\begin{Pro}
Let $\Lieg$ be a Leibniz algebra. For any $d \in \Der^{\Lie}(\Lieg)$ and $\phi \in \Gamma^{\Lie}(\Lieg)$, the following statements hold:
\begin{enumerate}
\item[(a)] $\Der^{\Lie}(\Lieg) \subseteq N_{\Der^{\Lie}(\Lieg)}(\Gamma^{\Lie}(\Lieg))$.
\item[(b)] $d \circ \phi \in \Gamma^{\Lie}(\Lieg)$ if and only if $\phi \circ d \in \Der_z^{\Lie}(\Lieg)$.
\item[(c)] $d \circ \phi \in \Der^{\Lie}(\Lieg)$ if and only if $[d,\phi] \in \Der_z^{\Lie}(\Lieg)$.
\end{enumerate}
\end{Pro}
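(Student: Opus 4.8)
The plan is to deduce all three statements from two elementary facts together with Proposition \ref{intersection}, which gives $\Der_z^{\Lie}(\Lieg) = \Der^{\Lie}(\Lieg) \cap \Gamma^{\Lie}(\Lieg)$. Here I interpret $N_{\Der^{\Lie}(\Lieg)}(\Gamma^{\Lie}(\Lieg))$ inside the Lie algebra $({\sf End}(\Lieg), [-,-])$ with $[\psi_1,\psi_2] = \psi_1 \circ \psi_2 - \psi_2 \circ \psi_1$, so that $d \in N_{\Der^{\Lie}(\Lieg)}(\Gamma^{\Lie}(\Lieg))$ means $[d,\phi] \in \Gamma^{\Lie}(\Lieg)$ for every $\phi \in \Gamma^{\Lie}(\Lieg)$ (the condition $[\phi,d] \in \Gamma^{\Lie}(\Lieg)$ being the same since $[\phi,d] = -[d,\phi]$). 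Thus (a) is exactly the claim that $[d,\phi] \in \Gamma^{\Lie}(\Lieg)$ whenever $d \in \Der^{\Lie}(\Lieg)$ and $\phi \in \Gamma^{\Lie}(\Lieg)$.

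\textbf{First fact (this is (a)).} I would verify directly that $[d,\phi]$ satisfies both defining identities of the $\Lie$-centroid, starting from $[d,\phi]([x,y]_{lie}) = d(\phi([x,y]_{lie})) - \phi(d([x,y]_{lie}))$. For the first identity: expanding $d(\phi([x,y]_{lie}))$ by the centroid rule $\phi([x,y]_{lie}) = [\phi(x),y]_{lie}$ and then by the $\Lie$-derivation rule for $d$ gives $[d\phi(x),y]_{lie} + [\phi(x),d(y)]_{lie}$; expanding $\phi(d([x,y]_{lie}))$ by the $\Lie$-derivation rule for $d$ and then applying $\phi$ to each resulting term in the form $\phi([a,b]_{lie}) = [\phi(a),b]_{lie}$ gives $[\phi d(x),y]_{lie} + [\phi(x),d(y)]_{lie}$. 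Subtracting, the terms $[\phi(x),d(y)]_{lie}$ cancel and we get $[d,\phi]([x,y]_{lie}) = [[d,\phi](x),y]_{lie}$. The second identity is obtained by the same computation using instead $\phi([x,y]_{lie}) = [x,\phi(y)]_{lie}$ and applying $\phi$ in the form $\phi([a,b]_{lie}) = [a,\phi(b)]_{lie}$; it yields $[d,\phi]([x,y]_{lie}) = [x,[d,\phi](y)]_{lie}$. Hence $[d,\phi] \in \Gamma^{\Lie}(\Lieg)$, proving (a).

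\textbf{Second fact.} I would record that $\phi \circ d \in \Der^{\Lie}(\Lieg)$ for every $d \in \Der^{\Lie}(\Lieg)$ and $\phi \in \Gamma^{\Lie}(\Lieg)$: expanding $(\phi \circ d)([x,y]_{lie}) = \phi([d(x),y]_{lie} + [x,d(y)]_{lie})$ and applying the centroid identity in the form $[\phi(a),b]_{lie}$ on the first term and $[a,\phi(b)]_{lie}$ on the second yields exactly $[(\phi d)(x),y]_{lie} + [x,(\phi d)(y)]_{lie}$. By contrast $d\circ\phi$ need not be a $\Lie$-derivation nor lie in $\Gamma^{\Lie}(\Lieg)$ in general, which is why (b) and (c) are genuine equivalences. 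Parts (b) and (c) are now formal, using only that $\Der^{\Lie}(\Lieg)$, $\Gamma^{\Lie}(\Lieg)$, $\Der_z^{\Lie}(\Lieg)$ are linear subspaces of ${\sf End}(\Lieg)$ with $\Der_z^{\Lie}(\Lieg) = \Der^{\Lie}(\Lieg) \cap \Gamma^{\Lie}(\Lieg)$. For (b): since $d\circ\phi = [d,\phi] + \phi\circ d$ and $[d,\phi] \in \Gamma^{\Lie}(\Lieg)$ by (a), we have $d\circ\phi \in \Gamma^{\Lie}(\Lieg) \iff \phi\circ d \in \Gamma^{\Lie}(\Lieg)$; and since $\phi\circ d \in \Der^{\Lie}(\Lieg)$ by the second fact, the latter is equivalent to $\phi\circ d \in \Der^{\Lie}(\Lieg) \cap \Gamma^{\Lie}(\Lieg) = \Der_z^{\Lie}(\Lieg)$. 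For (c): since $d\circ\phi = [d,\phi] + \phi\circ d$ and $\phi\circ d \in \Der^{\Lie}(\Lieg)$ by the second fact, we have $d\circ\phi \in \Der^{\Lie}(\Lieg) \iff [d,\phi] \in \Der^{\Lie}(\Lieg)$; and since $[d,\phi] \in \Gamma^{\Lie}(\Lieg)$ by (a), the latter is equivalent to $[d,\phi] \in \Der^{\Lie}(\Lieg) \cap \Gamma^{\Lie}(\Lieg) = \Der_z^{\Lie}(\Lieg)$.

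The only real work is the bracket bookkeeping in the first fact; the point to be careful about is that $\phi$ satisfies the two centroid identities simultaneously, and at each step one must invoke the form that cancels the unwanted term, otherwise residual terms such as $[\phi(x),d(y)]_{lie}$ survive and the centroid identity fails. Once (a) and the second fact are in place, (b) and (c) are pure linear algebra inside ${\sf End}(\Lieg)$.
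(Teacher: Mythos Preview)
Your proof is correct, and the underlying ingredients are the same as the paper's, but your organisation is cleaner and more conceptual. The paper treats (a) as a ``straightforward verification'' and then argues (b) and (c) by separate forward/backward computations, re-deriving along the way that $[d,\phi]\in\Gamma^{\Lie}(\Lieg)$ and that $\phi\circ d$ is a $\Lie$-derivation inside those computations (e.g.\ in the converse of (b) the paper writes ``now it is a routine task to check that $[d,\phi]\in\Gamma^{\Lie}(\Lieg)$'', which is just (a) again; in the converse of (c) it ends with ``it is easy to check that $\phi\circ d$ is a $\Lie$-derivation''). You instead isolate these two facts once --- (a) and the auxiliary observation that $\phi\circ d\in\Der^{\Lie}(\Lieg)$ always --- and then reduce (b) and (c) to the single identity $d\circ\phi=[d,\phi]+\phi\circ d$ together with Proposition~\ref{intersection}. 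This buys a shorter argument with no repeated verifications and makes transparent why (b) and (c) are formal once (a) is known; the paper's version is more hands-on but hides this structure. Your explicit reading of $N_{\Der^{\Lie}(\Lieg)}(\Gamma^{\Lie}(\Lieg))$ inside $({\sf End}(\Lieg),[-,-])$ is the intended one.
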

\begin{proof}
{\it (a)} Straightforward verification.

{\it (b)} Assume  $d\circ \phi \in \Gamma^{\Lie}({\Lieg})$. Then
\[
\begin{array}{rcl}
[\phi ,d]([x,y]_{lie}) &=& (\phi \circ d)([x,y]_{lie})- (d\circ \phi) ([x,y]_{lie})\\
&=& \left[ (\phi \circ d) \left( x\right) ,y\right] _{lie}+\left[ x,(\phi \circ d) \left( y\right) \right] _{lie}-[\left( d\circ \phi \right)(x),y]_{lie}  \\
&=&  [ [ \phi ,d](x),y]_{lie}+\left[ x,(\phi \circ d)\left( y\right) \right] _{lie}\\
&=& [\phi ,d]([x,y]_{{lie}})+\left[ x,(\phi \circ d) \left( y\right) \right] _{{lie}}
\end{array}
\]
Therefore $\left[ x,\left( \phi \circ d\right) \left( y\right) \right]_{{lie}}=0$. Similarly $[\left( d\circ \phi \right) (x),y]_{{lie}}=0$.

Conversely, assume  $\phi \circ d\in \mathsf{Der}_{z}^{\Lie}({\Lieg})$, then $[d,\phi ]([x,y]_{lie})=(d\circ \phi )([x,y]_{lie})-(\phi \circ d)([x,y]_{lie})$, hence
$(d\circ \phi )([x,y]_{lie})=[d,\phi ]([x,y]_{lie}),$ since $(\phi \circ d)([x,y]_{lie})=0$. Now it is a routine task to check that $[d,\phi ] \in \Gamma^{\Lie}(\Lieg)$, which completes the proof.

{\it (c)} Assume  $d\circ \phi \in \mathsf{Der}^{\Lie}({\Lieg}).$ A direct computation shows that  $\left[ \phi ,d\right] \in \Gamma^{\Lie}({\Lieg})$. On the other hand, it is easy to check that  $\left[ d, \phi \right] \in {\Der}^{\Lie}({\Lieg})$, therefore $\left[ \phi ,d\right] = - \left[ d, \phi \right] \in \Gamma^{\Lie}({\Lieg}) \cap {\Der}^{\Lie}({\Lieg})$. Proposition \ref{intersection} completes the proof.

Conversely, assume  $[d,\phi ]\in \mathsf{Der}_{z}^{\Lie}({\Lieg})$, then $(d\circ \phi )\left( [x,y]_{lie}\right) =[d,\phi ]\left( [x,y]_{lie}\right) +\left( \phi \circ d\right) \left( [x,y]_{lie} \right) =
\left( \phi \circ d\right) \left( [x,y]_{lie}\right)$. Now it is easy to check that $\phi \circ d$ is a $\Lie$-derivation of $\Lieg$.
\end{proof}

\begin{De}
Let $\Liem$ be a two-sided ideal of a Leibniz algebra $\Lieg$. Then $\Liem$ is said to be $\Gamma^{\Lie}(\Lieg)$-invariant if $\varphi(\Liem) \subset \Liem$ for all $\varphi \in \Gamma^{\Lie}(\Lieg)$.
\end{De}

\begin{Pro}
Let $\Lieg$ be a Leibniz algebra and $\Liem$ be a two-sided ideal of $\Lieg$. The following statements hold:
\begin{enumerate}
\item[(a)] $C_{\Lieg}^{\Lie}(\Liem, 0)$ is invariant under $\Gamma^{\Lie}(\Lieg)$.
\item[(b)] Every $\Lie$-perfect two-sided ideal $\Liem$ ($\Liem = \gamma_2^{\Lie}(\Liem)$) of $\Lieg$ is invariant under $\Gamma^{\Lie}(\Lieg)$.
\end{enumerate}
\end{Pro}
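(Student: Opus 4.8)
My plan is to reduce both statements to the defining identity of the \Lie-centroid, namely $\varphi([x,y]_{lie}) = [\varphi(x),y]_{lie} = [x,\varphi(y)]_{lie}$ for $\varphi \in \Gamma^{\Lie}(\Lieg)$, combined with the fact that $\Liem$ is a two-sided ideal. For part (a), given $\varphi \in \Gamma^{\Lie}(\Lieg)$ and $g \in C_{\Lieg}^{\Lie}(\Liem,0)$, I would simply compute, for every $m \in \Liem$, that $[\varphi(g),m]_{lie} = \varphi([g,m]_{lie}) = \varphi(0) = 0$, where the first equality is the centroid identity and the second holds because $g$ lies in the \Lie-centralizer $C_{\Lieg}^{\Lie}(\Liem,0)$. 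Hence $\varphi(g) \in C_{\Lieg}^{\Lie}(\Liem,0)$, which is exactly invariance under $\Gamma^{\Lie}(\Lieg)$; this part is a one-liner.

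For part (b), I would assume $\Liem = \gamma_2^{\Lie}(\Liem) = [\Liem,\Liem]_{\Lie}$ and fix $\varphi \in \Gamma^{\Lie}(\Lieg)$. First I would write a generic element of $\Liem$ as a finite sum $\sum_i \lambda_i [m_i,m_i']_{lie}$ with $\lambda_i \in \mathbb{K}$ and $m_i, m_i' \in \Liem$ --- the same way a \Lie-commutator is unfolded in the proof of Lemma~\ref{3.3}. Applying $\varphi$ and using the centroid identity term by term yields $\sum_i \lambda_i [\varphi(m_i),m_i']_{lie}$. Since $m_i' \in \Liem$, $\varphi(m_i) \in \Lieg$, and $\Liem$ is a two-sided ideal, both $[\varphi(m_i),m_i']$ and $[m_i',\varphi(m_i)]$ lie in $\Liem$, hence $[\varphi(m_i),m_i']_{lie} \in \Liem$; therefore the whole sum lies in $\Liem$, that is, $\varphi(\Liem) \subseteq \Liem$.

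The only step I expect to need a word of care --- and the natural place to challenge the argument --- is the passage in (b) from ``$\Liem$ equals the \emph{ideal} $[\Liem,\Liem]_{\Lie}$'' to ``every element of $\Liem$ is a linear combination of the brackets $[m,m']_{lie}$''. This is legitimate because, when $\Liem$ is a two-sided ideal, the linear span of $\{[m,m']_{lie} : m,m' \in \Liem\}$ is already a two-sided ideal of $\Lieg$: from the Leibniz identity one checks directly that $[g,[m,m']_{lie}] = 0$ for all $g \in \Lieg$, and that $[[m,m']_{lie},g] = [m,[m',g]]_{lie} + [m',[m,g]]_{lie}$, which stays in the span since $[m',g], [m,g] \in \Liem$; so this span coincides with $[\Liem,\Liem]_{\Lie}$ and the computation above goes through verbatim. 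Apart from that observation, the proposition is a purely formal consequence of the centroid identity and the ideal property of $\Liem$.
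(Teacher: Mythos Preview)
Your proof is correct and matches the paper's own argument essentially verbatim: for (a) the paper computes $[\varphi(g),m]_{lie}=\varphi([g,m]_{lie})=0$, and for (b) it writes $x=\sum_i\lambda_i[m_{i1},m_{i2}]_{lie}$ and applies the centroid identity to get $\varphi(x)=\sum_i\lambda_i[\varphi(m_{i1}),m_{i2}]_{lie}\in\Liem$. Your additional paragraph verifying that the linear span of the $[m,m']_{lie}$ is already a two-sided ideal (via the identities $[g,[m,m']_{lie}]=0$ and $[[m,m']_{lie},g]=[m,[m',g]]_{lie}+[m',[m,g]]_{lie}$) is a nice point of rigor that the paper leaves implicit, but it does not change the route.
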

\begin{proof}
{\it (a)} Let $g\in C_{\Lieg}^{\Lie}({\Liem}, 0)$   and  $\varphi \in \Gamma^{\Lie}({\Lieg})$ be, then
$\varphi \left( g\right) \in C_{\mathfrak{g}}^{\Lie}({\Liem}, 0)$, since $[\varphi \left( g\right),m]_{lie}=\varphi \lbrack g,m]_{lie}=0,$ for all $m \in \Liem$

{\it (b)}  Let ${\Liem}$ be a ${\Lie}$-perfect two-sided ideal of ${\Lieg}$ and let $\varphi \in \Gamma^{\Lie}(\mathfrak{g})$ be, then any $x \in \Liem$ can be written as $x = \overset{n}{\underset{i=1}{\sum }}\lambda_{i}[m_{i1},m_{i2}]_{lie}, m_{i1},m_{i2}\in \Liem$, hence  $\varphi \left( x\right) = \overset{n}{\underset{i=1}{\sum}\lambda _{i}}
[\varphi \left( m_{i1}\right) ,m_{i2}]_{lie} \in {\Liem}$.
\end{proof}

\begin{Th} \label{isomorphism}
Let $\Liem$ be a nonzero $\Gamma^{\Lie}(\Lieg)$-invariant two-sided ideal of a Leibniz algebra $\Lieg$, $V(\Liem) = \{ \varphi \in \Gamma^{\Lie}(\Lieg) \mid \varphi(\Liem)=0 \}$  and $W = \Hom\left( \frac{\Lieg}{\Liem}, C_{\Lieg}^{\Lie}(\Liem, 0) \right)$ be. Define $$T(\Liem) = \left\{ f \in W \mid f[\overline{x}, \overline{y}]_{lie}=[f(\overline{x}), \overline{y}]_{lie} = [\overline{x}, f(\overline{y})]_{lie} \right\}$$ with $\overline{x} = x + \Liem$ and $\overline{y} = y +\Liem$. Then the following statements hold:
\begin{enumerate}
\item[(a)] $T(\Liem)$ is a vector subspace of $W$ isomorphic to $V(\Liem)$.
\item[(b)] If $\Gamma^{\Lie}(\Liem) = \mathbb{K}.\id_{\Liem}$, then $\Gamma^{\Lie}(\Lieg) = \mathbb{K}.\id_{\Lieg} \oplus V(\Liem)$ as vector spaces.
\end{enumerate}
\end{Th}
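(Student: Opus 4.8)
The plan is to construct an explicit linear map $\Phi : V(\Liem) \to T(\Liem)$ for part (a), and then use part (a) together with the hypothesis $\Gamma^{\Lie}(\Liem) = \mathbb{K}\cdot\id_{\Liem}$ to split $\Gamma^{\Lie}(\Lieg)$ for part (b). For (a), given $\varphi \in V(\Liem)$, since $\varphi(\Liem) = 0$ the map $\varphi$ factors through $\Lieg/\Liem$; moreover, for any $x \in \Lieg$ and $m \in \Liem$ we have $[\varphi(x),m]_{lie} = \varphi([x,m]_{lie}) = 0$ because $\Liem$ is a two-sided ideal and $\varphi$ kills $\Liem$, so $\varphi(x) \in C_{\Lieg}^{\Lie}(\Liem,0)$. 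Hence $\varphi$ induces a well-defined linear map $\bar\varphi : \Lieg/\Liem \to C_{\Lieg}^{\Lie}(\Liem,0)$, i.e. an element of $W$. The centroid identities for $\varphi$ pass to the quotient, giving $\bar\varphi \in T(\Liem)$; set $\Phi(\varphi) = \bar\varphi$. Injectivity of $\Phi$ is clear since $\varphi$ is determined by $\bar\varphi$ via $\varphi = \bar\varphi \circ \pi$ where $\pi : \Lieg \twoheadrightarrow \Lieg/\Liem$. For surjectivity, given $f \in T(\Liem)$ define $\varphi = \iota \circ f \circ \pi : \Lieg \to \Lieg$ where $\iota : C_{\Lieg}^{\Lie}(\Liem,0) \hookrightarrow \Lieg$; one checks $\varphi(\Liem) = 0$ and that the three centroid identities for $\varphi$ reduce to those for $f$ on the quotient (using that $[\varphi(x),y]_{lie}$ depends only on $\bar x$, since the difference lies in $[\Liem,\Lieg]_{lie}$ and $\varphi(x) \in C_{\Lieg}^{\Lie}(\Liem,0)$). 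Thus $\varphi \in V(\Liem)$ with $\Phi(\varphi) = f$, proving $V(\Liem) \cong T(\Liem)$ as vector spaces.

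For part (b), first note $\mathbb{K}\cdot\id_{\Lieg} \subseteq \Gamma^{\Lie}(\Lieg)$ trivially, and $\mathbb{K}\cdot\id_{\Lieg} \cap V(\Liem) = 0$ since $\Liem \neq 0$ and $\id_{\Lieg}$ does not vanish on a nonzero ideal. So the sum $\mathbb{K}\cdot\id_{\Lieg} \oplus V(\Liem)$ is direct, and it remains to show every $\varphi \in \Gamma^{\Lie}(\Lieg)$ lies in this sum. The key observation is that, since $\Liem$ is $\Gamma^{\Lie}(\Lieg)$-invariant, restriction gives a map $\Gamma^{\Lie}(\Lieg) \to \Gamma^{\Lie}(\Liem)$, $\varphi \mapsto \varphi|_{\Liem}$: indeed for $m_1, m_2 \in \Liem$ the identities $\varphi([m_1,m_2])_{lie} = [\varphi(m_1),m_2]_{lie} = [m_1,\varphi(m_2)]_{lie}$ hold in $\Lieg$, and all terms lie in $\Liem$, so they hold in $\Liem$. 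By hypothesis $\Gamma^{\Lie}(\Liem) = \mathbb{K}\cdot\id_{\Liem}$, so there is $\lambda \in \mathbb{K}$ with $\varphi|_{\Liem} = \lambda\,\id_{\Liem}$. Then $\varphi - \lambda\,\id_{\Lieg}$ is an element of $\Gamma^{\Lie}(\Lieg)$ vanishing on $\Liem$, i.e. $\varphi - \lambda\,\id_{\Lieg} \in V(\Liem)$, giving $\varphi = \lambda\,\id_{\Lieg} + (\varphi - \lambda\,\id_{\Lieg}) \in \mathbb{K}\cdot\id_{\Lieg} \oplus V(\Liem)$.

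The main obstacle I anticipate is the surjectivity argument in (a): one must verify carefully that the three identities defining $T(\Liem)$ — which are stated on $\Lieg/\Liem$ with values forced into $C_{\Lieg}^{\Lie}(\Liem,0)$ — lift exactly to the centroid identities for $\varphi = \iota\circ f\circ\pi$ on all of $\Lieg$, and in particular that expressions like $[\varphi(x),y]_{lie}$ are insensitive to the choice of representative $y$ of $\bar y$. This is where the hypothesis that $\Liem$ is a two-sided ideal and the definition of $C_{\Lieg}^{\Lie}(\Liem,0) = Z$-type centralizer are both essential; a secondary subtlety is checking $\mathbb{K}\cdot\id_{\Lieg}\cap V(\Liem)=0$, which genuinely uses $\Liem\neq 0$. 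Both points are routine once set up correctly, so I would record them briefly rather than belabor the computations.
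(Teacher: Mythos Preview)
Your proposal is correct and follows essentially the same route as the paper: for (a) you build the map $V(\Liem)\to T(\Liem)$ by factoring $\varphi$ through $\Lieg/\Liem$ and invert it via $\varphi_f = f\circ\pi$, and for (b) you restrict to $\Liem$, pick the scalar $\lambda$, and split off $\varphi-\lambda\,\id_\Lieg\in V(\Liem)$. If anything, you are more careful than the paper in justifying that the image of $\varphi$ lands in $C_\Lieg^{\Lie}(\Liem,0)$ and that the mixed brackets $[f(\bar x),\bar y]_{lie}$ are independent of representatives, which the paper leaves implicit.
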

\begin{proof} {\it (a)} Define $\alpha :V({\Liem})\longrightarrow T({\Liem})$ by $\alpha \left( \varphi \right) \left( x+\mathfrak{m} \right) = \varphi \left( x\right)$.

Obviously,  $\alpha $ is an injective well-defined linear map and it is onto, since for every $f\in T(\mathfrak{m})$, set $\varphi _{f}:\mathfrak{g\longrightarrow g}$, $\varphi _{f}\left( x\right)
=f\left( x+\mathfrak{m}\right)$, for all $x\in \mathfrak{g}$. It is easy to check that
 $\varphi _{f}\in \Gamma ^{\mathsf{Lie}}(\mathfrak{g})$ and $\varphi_{f}\left( \mathfrak{m}\right) =0,$ so $\varphi _{f}\in V(\mathfrak{m})$. Moreover, $\alpha \left( \varphi _{f}\right)
\left( x+\mathfrak{m}\right) =$ $\varphi _{f}\left( x\right) =f\left( x+\mathfrak{m}\right)$.

{\it (b)} If $\Gamma ^{\mathsf{Lie}}(\mathfrak{m})=\mathbb{K}.\mathsf{Id}_{\mathfrak{m}}$, then for all $\psi \in \Gamma ^{\mathsf{Lie}}(\mathfrak{g})$, $\psi_{\mid _{\mathfrak{m}}}=\lambda. \mathsf{Id}_{\mathfrak{m}}$, for some $\lambda \in \mathbb{K}$.

If $\psi \neq \lambda. \mathsf{Id}_{\mathfrak{g}}$, define $\varphi : \Lieg \to \Lieg$ by $\varphi \left( x\right) =\lambda x$, then $\varphi \in \Gamma ^{\mathsf{Lie}}(\mathfrak{g})$
and $\psi -\varphi \in V(\mathfrak{m})$.
Clearly, $\psi =\varphi +\left( \psi -\varphi \right) \in \mathbb{K}.\id_{\Lieg} + V(\Liem)$. Furthermore it is evident that $\mathbb{K}.\mathsf{Id}_{\Lieg}\cap V(\mathfrak{m})=0$, which completes the proof.
\end{proof}

\begin{Co}
If $\mathbb{K}$ is a field of zero characteristic, then the following equalities hold:
$$\Der_z^{\Lie}(\Lieg) = V(\gamma_2^{\Lie}(\Lieg)) =  T(\gamma_2^{\Lie}(\Lieg))$$
\end{Co}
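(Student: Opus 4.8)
The plan is to establish the two claimed equalities separately, both as consequences of Theorem \ref{isomorphism} and Proposition \ref{intersection}, specialised to the ideal $\Liem = \gamma_2^{\Lie}(\Lieg)$. First I would note that $\gamma_2^{\Lie}(\Lieg) = [\Lieg,\Lieg]_{\Lie}$ is a two-sided ideal of $\Lieg$, and that it is $\Gamma^{\Lie}(\Lieg)$-invariant: for $\varphi \in \Gamma^{\Lie}(\Lieg)$ any element $x = \sum_i \lambda_i [a_i,b_i]_{lie}$ of $\gamma_2^{\Lie}(\Lieg)$ satisfies $\varphi(x) = \sum_i \lambda_i [\varphi(a_i),b_i]_{lie} \in \gamma_2^{\Lie}(\Lieg)$, using the centroid identity; this is exactly the argument already given in the proof of the preceding Proposition part (b) for $\Lie$-perfect ideals, applied here directly since $\gamma_2^{\Lie}(\Lieg)$ is a commutator ideal. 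Hence $V(\gamma_2^{\Lie}(\Lieg))$ and $T(\gamma_2^{\Lie}(\Lieg))$ are defined, and part (a) of Theorem \ref{isomorphism} already gives $V(\gamma_2^{\Lie}(\Lieg)) \cong T(\gamma_2^{\Lie}(\Lieg))$; I would want to promote this to an honest equality, which is legitimate because the isomorphism $\alpha$ of that proof is essentially an identification (a map $\varphi$ vanishing on $\Liem$ is the same data as the induced map on $\Lieg/\Liem$), so under the standard identification $\Hom(\Lieg/\Liem, -) = \{\varphi \in \Hom(\Lieg,-) : \varphi(\Liem)=0\}$ the sets literally coincide.

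Next I would prove $\Der_z^{\Lie}(\Lieg) = V(\gamma_2^{\Lie}(\Lieg))$. The inclusion $\subseteq$ is the observation, already recorded in the text right after Definition \ref{Lie central der}, that every $d \in \Der_z^{\Lie}(\Lieg)$ annihilates $\gamma_2^{\Lie}(\Lieg)$; combined with Proposition \ref{intersection}, which says $\Der_z^{\Lie}(\Lieg) = \Der^{\Lie}(\Lieg) \cap \Gamma^{\Lie}(\Lieg) \subseteq \Gamma^{\Lie}(\Lieg)$, we get $d \in \Gamma^{\Lie}(\Lieg)$ with $d(\gamma_2^{\Lie}(\Lieg)) = 0$, i.e. $d \in V(\gamma_2^{\Lie}(\Lieg))$. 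For the reverse inclusion, take $\varphi \in V(\gamma_2^{\Lie}(\Lieg))$: then $\varphi \in \Gamma^{\Lie}(\Lieg)$, so $\varphi([x,y]_{lie}) = [\varphi(x),y]_{lie}$; but $[x,y]_{lie} \in \gamma_2^{\Lie}(\Lieg)$, so the left side is $0$, giving $[\varphi(x),y]_{lie}=0$ for all $y$, i.e. $\varphi(x) \in Z_{\Lie}(\Lieg)$. Thus $\varphi(\Lieg) \subseteq Z_{\Lie}(\Lieg)$, and since $\varphi \in \Gamma^{\Lie}(\Lieg) \subseteq \Der^{\Lie}(\Lieg)$ fails in general — here I must be careful: the centroid is not contained in the derivations — I instead check directly that $\varphi$ is a $\Lie$-derivation: $\varphi([x,y]_{lie}) = 0 = [\varphi(x),y]_{lie} + [x,\varphi(y)]_{lie}$ since both terms vanish by $\varphi(x),\varphi(y) \in Z_{\Lie}(\Lieg)$. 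Hence $\varphi \in \Der^{\Lie}(\Lieg) \cap \Gamma^{\Lie}(\Lieg) = \Der_z^{\Lie}(\Lieg)$ by Proposition \ref{intersection}.

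The role of the characteristic-zero hypothesis (really just $\mathrm{char}\,\mathbb{K} \neq 2$, already standing, but zero characteristic is invoked to be safe) is essentially cosmetic here: it guarantees, via the identity $[x,x]_{lie} = 2[x,x]$, the smooth behaviour of the $\Lie$-bracket, and more to the point it is the hypothesis under which Theorem \ref{isomorphism} and its surrounding identifications are cleanest. I expect the only genuine subtlety to be the bookkeeping in identifying $V(\gamma_2^{\Lie}(\Lieg))$ with $T(\gamma_2^{\Lie}(\Lieg))$ as equal sets rather than merely isomorphic ones, i.e. making precise that a linear endomorphism of $\Lieg$ killing $\gamma_2^{\Lie}(\Lieg)$ "is" a map $\Lieg/\gamma_2^{\Lie}(\Lieg) \to C_{\Lieg}^{\Lie}(\gamma_2^{\Lie}(\Lieg),0)$ — noting that $Z_{\Lie}(\Lieg) = C_{\Lieg}^{\Lie}(\Lieg,0) \subseteq C_{\Lieg}^{\Lie}(\gamma_2^{\Lie}(\Lieg),0)$ so the images land in the right place, and that the defining identities of $T(\Liem)$ translate back to membership in $\Gamma^{\Lie}(\Lieg)$. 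Everything else is a direct assembly of the three cited results.
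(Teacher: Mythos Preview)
Your proposal is correct and follows essentially the same route as the paper: both prove $\Der_z^{\Lie}(\Lieg) = V(\gamma_2^{\Lie}(\Lieg))$ by combining Proposition~\ref{intersection} with the observation that for $d \in \Gamma^{\Lie}(\Lieg)$ the vanishing $d([x,y]_{lie})=0$ forces $[d(x),y]_{lie}=[x,d(y)]_{lie}=0$, and both then invoke Theorem~\ref{isomorphism} together with the $\Gamma^{\Lie}(\Lieg)$-invariance of $\gamma_2^{\Lie}(\Lieg)$ for the second equality. Your version is in fact more scrupulous than the paper's, since you flag that Theorem~\ref{isomorphism} literally gives only an isomorphism $V(\Liem)\cong T(\Liem)$ and explain why it may be read as an identification, and you also comment on the (apparently unused) characteristic-zero hypothesis---points the paper passes over in silence.
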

\begin{proof}
If $d\in {\Der}_{z}^{\Lie}(\mathfrak{g})$,  then  $d\in {\Der}^{\Lie}({\Lieg})\cap \Gamma^{\Lie}({\Lieg})$ by Proposition \ref{intersection}, hence $[d\left( x\right)
,y]_{lie}=[x,d(y)]_{lie}=0$, so $d\in V(\gamma _{2}^{\mathsf{Lie}}(\mathfrak{g}))$.

Conversely, if $d\in V(\gamma _{2}^{\Lie}({\Lieg}))$, then $d\in \Gamma ^{\mathsf{Lie}}(\mathfrak{g})$ and $d([x,y]_{lie})=0$, so $d([x,y]_{lie})$ $=[d\left( x\right) ,y]_{lie}=[x,d\left( y\right) ]_{lie}=0.$
Hence $d([x,y]_{lie})=[d\left( x\right) ,y]_{lie}+[x,d\left( y\right) ]_{lie}=0$, which implies that $d\in \mathsf{Der}_{z}^{\mathsf{Lie}}(\mathfrak{g}).$

The second equality is provided by Theorem \ref{isomorphism} since $\gamma_2^{\Lie}(\Lieg)$ is $\Gamma^{\Lie}(\Lieg)$-invariant.
\end{proof}

\begin{Th}
Let $\Lieg$ be a Leibniz algebra such that $\Lieg = \Lieg_1 \oplus \Lieg_2$, where $\Lieg_1, \Lieg_2$ are two-sided ideals of $\Lieg$. Then  the following isomorphism of $\mathbb{K}$-vector spaces holds:
$$\Gamma^{\Lie}(\Lieg) \cong \Gamma^{\Lie}(\Lieg_1) \oplus \Gamma^{\Lie}(\Lieg_2) \oplus C_1 \oplus C_2$$
where $C_i = \{ \varphi \in \Hom(\Lieg_i, \Lieg_j) \mid \varphi(\Lieg_i) \subseteq Z_{\Lie}(\Lieg_j) ~ \text{and}~ \varphi(\gamma_2^{\Lie}(\Lieg_i))=0 ~ \text{for}~
 1 \leq i \neq j \leq 2 \}$.
 \end{Th}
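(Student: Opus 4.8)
The plan is to build the claimed isomorphism by decomposing an arbitrary $d \in \Gamma^{\Lie}(\Lieg)$ according to the direct sum $\Lieg = \Lieg_1 \oplus \Lieg_2$ and its four ``matrix block'' components, then identifying each block with one of the four summands on the right. Concretely, for $d \in \Gamma^{\Lie}(\Lieg)$ write $d = \sum_{i,j} p_i \circ d \circ \iota_j$, where $\iota_j : \Lieg_j \hookrightarrow \Lieg$ and $p_i : \Lieg \twoheadrightarrow \Lieg_i$ are the canonical injections and projections; denote the $(i,i)$-component by $d_{ii} : \Lieg_i \to \Lieg_i$ and the $(j,i)$-component (for $i \neq j$) by $d_{ji} : \Lieg_i \to \Lieg_j$. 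The first task is to exploit the $\Lie$-centroid identities together with the fact that $[\Lieg_1, \Lieg_2]_{lie} = 0$ (a consequence of $\Lieg_1, \Lieg_2$ being two-sided ideals with zero intersection) to pin down exactly what each block must satisfy.

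First I would establish that the ``cross'' terms $[x_1, x_2]_{lie}$ vanish for $x_i \in \Lieg_i$: since $[x_1, x_2] \in \Lieg_1 \cap \Lieg_2 = 0$ and likewise $[x_2, x_1] = 0$. Then, applying the centroid condition $d([x,y]_{lie}) = [d(x), y]_{lie} = [x, d(y)]_{lie}$ with $x = x_1 + x_2$, $y = y_1 + y_2$ and expanding, the diagonal equations force $d_{ii} \in \Gamma^{\Lie}(\Lieg_i)$, while the equations coming from mixed inputs force, for $i \neq j$: $[d_{ji}(x_i), y_j]_{lie} = 0$ for all $y_j \in \Lieg_j$ (so $d_{ji}(\Lieg_i) \subseteq Z_{\Lie}(\Lieg_j)$), and $d_{ji}([x_i, y_i]_{lie}) = [d_{ji}(x_i), y_i]_{lie}$; but the right side lands in $[\Lieg_j, \Lieg_i]_{lie} = 0$, whence $d_{ji}(\gamma_2^{\Lie}(\Lieg_i)) = 0$. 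These two conditions are precisely the defining conditions of $C_i$. I should also check the reverse direction: any choice of $d_{11} \in \Gamma^{\Lie}(\Lieg_1)$, $d_{22} \in \Gamma^{\Lie}(\Lieg_2)$, $d_{21} \in C_1$, $d_{12} \in C_2$ assembles via $d(x_1 + x_2) = d_{11}(x_1) + d_{21}(x_1) + d_{12}(x_2) + d_{22}(x_2)$ into an element of $\Gamma^{\Lie}(\Lieg)$ — this is a routine verification that all three centroid identities hold, again using that cross-brackets vanish and that the $Z_{\Lie}$-valued parts kill the relevant brackets.

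Having both directions, the map $d \mapsto (d_{11}, d_{22}, d_{21}, d_{12})$ is a well-defined $\mathbb{K}$-linear bijection from $\Gamma^{\Lie}(\Lieg)$ onto $\Gamma^{\Lie}(\Lieg_1) \oplus \Gamma^{\Lie}(\Lieg_2) \oplus C_1 \oplus C_2$, and linearity together with the explicit reconstruction formula shows it is an isomorphism of $\mathbb{K}$-vector spaces; since the statement only claims a vector space isomorphism, no compatibility with composition is needed.

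The step I expect to be the main obstacle is the careful bookkeeping in disentangling the centroid identities into the four independent block conditions — in particular making sure that the mixed-input instances of $d([x,y]_{lie}) = [d(x),y]_{lie} = [x,d(y)]_{lie}$ really do separate cleanly, with no leftover coupling between $d_{21}$ and $d_{12}$ or between the diagonal and off-diagonal blocks. The key facts that make this work are (i) $[\Lieg_1, \Lieg_2]_{lie} = 0$, which decouples the two factors at the level of the bracket, and (ii) that membership in $C_i$ is equivalent to the pair of conditions ``image in $Z_{\Lie}(\Lieg_j)$'' and ``vanishes on $\gamma_2^{\Lie}(\Lieg_i)$'', each of which emerges from a distinct specialization of the centroid identity. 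Once those are in place the rest is formal.
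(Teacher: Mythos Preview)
Your proposal is correct and follows essentially the same strategy as the paper: both decompose an element of $\Gamma^{\Lie}(\Lieg)$ into its four ``matrix blocks'' $\pi_i\circ d\circ\iota_j$ (the paper writes $\pi_i\varphi\pi_j$), use the vanishing of cross brackets $[\Lieg_1,\Lieg_2]_{lie}=0$ to see that the diagonal blocks lie in $\Gamma^{\Lie}(\Lieg_i)$ and the off-diagonal blocks satisfy exactly the two defining conditions of $C_i$, and then check the converse by reassembling. The only cosmetic difference is that the paper first records that each $\pi_i$ itself belongs to $\Gamma^{\Lie}(\Lieg)$ (so that every block $\pi_i\varphi\pi_j$ is again in $\Gamma^{\Lie}(\Lieg)$, giving an \emph{internal} direct sum decomposition), whereas you frame it as an external bijection $d\mapsto(d_{11},d_{22},d_{21},d_{12})$; the content is identical.
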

\begin{proof}
Let $\pi _{i}:\mathfrak{g}\longrightarrow \mathfrak{g}_{i}$ be the canonical projection for $i=1,2$. Then $\pi _{1},\pi _{2}\in \Gamma^{\Lie}(\mathfrak{g)}$
and $\pi_{1}+\pi _{2}=\id_{\mathfrak{g}}$.

So we have for $\varphi \in \Gamma^{\Lie}(\mathfrak{g)}$ that $\varphi =\pi _{1} \circ \varphi \circ \pi _{1}+\pi _{1} \circ \varphi \circ \pi _{2}+\pi _{2} \circ \varphi \circ \pi _{1}+\pi _{2} \circ \varphi \circ \pi _{2}$

Note that $\pi _{i} \circ \varphi \circ \pi _{j}\in \Gamma^{\mathsf{Lie}}(\mathfrak{g)}$ for $i,j=1, 2$. So, by the above equality it follows that
$$\Gamma^{\Lie}(\mathfrak{g)} = \pi _{1}\Gamma^{\Lie}(\mathfrak{g)}\pi _{1}\oplus \pi _{1}\Gamma^{\Lie}(\mathfrak{g)}\pi_{2}\oplus \pi _{2}\Gamma^{\Lie}(\mathfrak{g)}\pi _{1}\oplus \pi_{2}\Gamma^{\Lie}(\mathfrak{g)}\pi _{2}$$
 as vector spaces. Indeed, it is enough to show that $\pi _{i}\Gamma^{\Lie}(\mathfrak{g)} \pi_{k}\cap \pi_{l}\Gamma^{\Lie}(\mathfrak{g)}\pi _{j}=0$ for $i, j, k, l=1, 2$, such that $(i, j) \neq (k, l)$. For instance
$\pi _{2}\Gamma^{\Lie}(\mathfrak{g)}\pi _{1}\cap \pi _{1}\Gamma^{\Lie}(\mathfrak{g)}\pi_{2}=0$, since for any $\beta \in \pi _{2}\Gamma^{\Lie}(\mathfrak{g)}\pi _{1}\cap
\pi _{1}\Gamma^{\Lie}(\mathfrak{g)}\pi _{2}$, there are some $f_{1},f_{2}\in \Gamma^{\Lie}(\mathfrak{g)}$ such that $\beta = \pi_{2} \circ f_{1} \circ \pi _{1}=\pi _{1} \circ f_{2} \circ \pi _{2}$, then
$\beta \left( x\right) =\pi _{1} \circ f_{2} \circ \pi _{2}\left( x\right) =\pi_{1} \circ f_{2} \circ \pi _{2}\left( \pi _{2}\left( x\right) \right) =\pi _{2} \circ f_{1} \circ \pi_{1}\left( \pi _{2}\left( x\right) \right) =
\pi _{2} \circ f_{1}\left( 0\right) =0$, for all $x\in \mathfrak{g}$. Hence $\beta =0$. Other cases can be checked in a similar way.

Now let's denote $\Gamma^{\Lie}(\mathfrak{g)}_{ij}=\pi _{i}\Gamma ^{\mathsf{Lie}}(\mathfrak{g)}\pi _{j}$, $i, j = 1, 2$. We claim that  the following isomorphisms of vector spaces hold:
$$\Gamma^{\Lie}(\mathfrak{g)}_{11}\cong \Gamma^{\Lie}(\mathfrak{g}_{1}), ~ \Gamma^{\Lie}(\mathfrak{g)}_{22}\cong \Gamma^{\Lie}(\mathfrak{g}_{2}),~ \Gamma^{\Lie}(\mathfrak{g)}_{12}\cong C_{2}, ~
 \Gamma^{\Lie}(\mathfrak{g)}_{21}\cong C_{1}$$

For $\varphi \in \Gamma^{\Lie}(\mathfrak{g)}_{11}$, $\varphi \left(\mathfrak{g}_{2}\right) =0$  so $\varphi_{\mid \mathfrak{g}_{1}}\in \Gamma^{\Lie}(\mathfrak{g}_{1}\mathfrak{)}$. Now, considering
 $\Gamma^{\Lie}(\mathfrak{g}_{1}\mathfrak{)}$ as a subalgebra of $\Gamma^{\Lie}(\mathfrak{g)}$ such that for any $\varphi _{0}\in \Gamma^{\Lie}(\mathfrak{g}_{1})$, $\varphi_0$
vanishes on $\mathfrak{g}_{2}$, that is, $\varphi _{0}\left( x_{1}\right) =$ $\varphi _{0}\left( x_{2}\right)$, \ $\varphi _{0}\left( x_{2}\right) =$ $0$, for all $x_{1}\in \mathfrak{g}_{1}$
and $x_{2}\in \mathfrak{g}_{2}$. Then $\varphi _{0}\in \Gamma^{\Lie}(\mathfrak{g)}$ and $\varphi_{0}\in \Gamma^{\Lie}(\mathfrak{g)}_{11}$. Therefore, $\Gamma^{\Lie}(\mathfrak{g)}_{11}\cong
\Gamma^{\Lie}(\mathfrak{g}_{1})$ by means of the isomorphism $\sigma :\Gamma^{\Lie}(\mathfrak{g)}_{11}\longrightarrow \Gamma^{\Lie}(\mathfrak{g}_{1}\mathfrak{)}$, $\sigma \left( \varphi \right)
=\varphi_{\mid  \mathfrak{g}_{1}}$, for all $\varphi \in \Gamma ^{\mathsf{Lie}%
}(\mathfrak{g)}_{11}$.

The isomorphism $\Gamma ^{\mathsf{Lie}}(\mathfrak{g)}_{22}\cong \Gamma^{\mathsf{Lie}}(\mathfrak{g}_{2}\mathfrak{)}$ can be proved in an analogous way.

 $\Gamma^{\Lie}(\mathfrak{g)}_{12}\cong C_{2}$. Indeed, for any  $\varphi \in \Gamma^{\Lie}(\mathfrak{g)}_{12}$ there exists a $\varphi _{0}\in \Gamma^{\Lie}(\mathfrak{g)}$ such that
  $\varphi=\pi _{1} \circ \varphi _{0} \circ \pi _{2}$. For $x_{k}=(x_{k}^{1}, x_{k}^{2}) \in \mathfrak{g}$, where $x_{k}^{i}\in
\mathfrak{g}_{i}$, $i=1,2$, $k=1,2$, we have
\[
\begin{array}{rcl}
\varphi \left( [ x_{1},x_{2}]_{lie}\right) &=& \pi _{1} \circ \varphi_{0} \circ \pi_{2}\left( [x_{1},x_{2}]_{lie}\right) =\pi _{1} \circ \varphi _{0} \circ \pi _{2}\left(
[(x_{1}^{1}, x_{1}^{2}), (x_{2}^{1}, x_{2}^{2})]_{lie}\right) \\
&= & \pi _{1}\varphi _{0}\left( [x_{1}^{2},x_{2}^{2}]_{lie}\right) =\pi _{1}\left([\varphi _{0}\left( x_{1}^{2}\right) ,x_{2}^{2}]_{lie}\right) =0
\end{array}
\]
hence $\varphi (\gamma _{2}^{\mathsf{Lie}}(\mathfrak{g}))=0$. On the other hand, $[\varphi \left( x_{1}\right) ,x_{2}]_{lie}=\varphi \left( \lbrack x_{1},x_{2}]_{lie}\right) =0$,
so, $\varphi (\mathfrak{g})\subseteq Z_{\mathsf{Lie}}(\mathfrak{g})$ and $\varphi (\gamma _{2}^{\mathsf{Lie}}(\mathfrak{g}))=0$.

It follows that $\varphi_{\mid \mathfrak{g}_{2}}\left( \mathfrak{g}_{2}\right) \subseteq Z_{\Lie}(\mathfrak{g}_{1})$ and $\varphi_{\mid \mathfrak{g}_{2}}(\gamma _{2}^{\Lie}(\mathfrak{g}_{2}))=0$,
hence $\varphi_{\mid {\mathfrak{g}_{2}}}\in C_{2}.$

Conversely, for $\varphi \in C_{2}$, expanding $\varphi $ on $\mathfrak{g}$ by $\varphi \left( \mathfrak{g}_{1}\right) =0$, we have $\pi _{1} \circ \varphi \circ \pi_{2}=\varphi$
and so $\varphi \in $ $\Gamma^{\Lie}(\mathfrak{g)}_{12}$. Hence $\Gamma^{\Lie}(\mathfrak{g)}_{12}\cong C_{2}$, by means the
 isomorphism $\tau : \Gamma^{\Lie}(\mathfrak{g)}_{12}\longrightarrow C_{2}$, $\tau \left( \varphi \right) =\varphi_{\mid \mathfrak{g}_{2}}$ \ for all $\varphi \in \Gamma ^{\mathsf{Lie}}(\mathfrak{g)}_{12}$.

 Similarly, can be  proved that $\Gamma ^{\mathsf{Lie}}(\mathfrak{g)}_{21}\cong C_{1}$, which completes the proof.
\end{proof}


\section{${\sf ID}^{\Lie}$-derivations}\label{almost}

\begin{De}
A $\Lie$-derivation $d : \Lieg \to \Lieg$ is said to be an {\sf ID}-$\Lie$-derivation if $d(\Lieg) \subseteq \gamma_2^{\Lie}(\Lieg)$. The set of all {\sf ID}-$\Lie$-derivations of $\Lieg$ is denoted by ${\sf ID}^{\Lie}(\Lieg)$.

An {\sf ID}-$\Lie$-derivation $d : \Lieg \to \Lieg$ is said to be ${\sf ID}_*$-${\Lie}$-derivation if $d$ vanishes on the $\Lie$-central elements of $\Lieg$. The set of all ${\sf ID}_*$-${\Lie}$-derivations of $\Lieg$ is denoted by ${\sf ID}_*^{\Lie}(\Lieg)$.
\end{De}

It is obvious that ${\sf ID}^{\Lie}(\Lieg)$ and ${\sf ID}_*^{\Lie}(\Lieg)$ are subalgebras of $\Der^{\Lie}(\Lieg)$ and
\begin{equation} \label{inclusion}
{\Der}_c^{\Lie}(\Lieg) \subseteq {\sf ID}_*^{\Lie}(\Lieg) \subseteq {\sf ID}^{\Lie}(\Lieg)
\end{equation}
where ${\Der}_c^{\Lie}(\Lieg)$ is the subspace of ${\Der}^{\Lie}(\Lieg)$ given by $\{ d \in \Der^{\Lie}(\Lieg) \mid d(x) \in [x, \Lieg]_{lie}, \forall x \in \Lieg \}$.  These kinds of derivations are called almost inner {\Lie}-derivations of $\Lieg$.

\begin{Ex}
Let $\Lieg$ be the three-dimensional Leibniz algebra with basis $\{a_1, a_2, a_3 \}$ and bracket operation given by $[a_2, a_2] = [a_3, a_3] = a_1$ and zero elsewhere (algebra 2 (c) in \cite{CILL}). The right multiplications {\Lie}-derivations $R_x, x \in \Lieg$, are examples of almost inner {\Lie}-derivations.
\end{Ex}

\begin{De}
An almost inner {\Lie}-derivation $d$ is said to be central almost inner {\Lie}-derivation if there exists an $x \in Z^l(\Lieg)$ such that $(d-R_x) (\Lieg) \subseteq Z_{\Lie}(\Lieg)$.

We denote the $\mathbb{K}$-vector space of all central almost inner {\Lie}-derivation by ${\Der}_{cz}^{\Lie}(\Lieg)$.
\end{De}

\begin{Th} \label{IDstar}
Let $\Lieg$ and $\Lieq$ be two $\Lie$-isoclinic Leibniz algebras. Then ${\sf ID}_*^{\Lie}(\Lieg) \cong {\sf ID}_*^{\Lie}(\Lieq)$.
\end{Th}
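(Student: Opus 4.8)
The plan is to exploit the \Lie-isoclinism $(\eta,\xi)\colon\Lieg\sim\Lieq$ exactly as in the proof of Proposition~\ref{mono}, but now transporting \emph{all} of ${\sf ID}_*^{\Lie}(\Lieg)$ rather than just \Lie-central derivations. Recall that $\eta$ induces an isomorphism $\eta'\colon\Lieg/Z_{\Lie}(\Lieg)\to\Lieq/Z_{\Lie}(\Lieq)$ and that $\xi\colon[\Lieg,\Lieg]_{\Lie}\to[\Lieq,\Lieq]_{\Lie}$ is an isomorphism commuting with the \Lie-commutator maps, i.e. $\xi([x,y]_{lie})=[\eta(x),\eta(y)]_{lie}$ for all $x,y\in\Lieg$ (here one should read $\eta(x)$ as any representative of $\eta'(\overline x)$, the value being independent of the choice). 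The key point is that an element $d\in{\sf ID}_*^{\Lie}(\Lieg)$ has image inside $\gamma_2^{\Lie}(\Lieg)=[\Lieg,\Lieg]_{\Lie}$ and kills $Z_{\Lie}(\Lieg)$, so $d$ factors as $\Lieg\twoheadrightarrow\Lieg/Z_{\Lie}(\Lieg)\xrightarrow{\bar d}[\Lieg,\Lieg]_{\Lie}$ for a unique linear map $\bar d$.

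First I would define the candidate map $\Phi\colon{\sf ID}_*^{\Lie}(\Lieg)\to{\sf ID}_*^{\Lie}(\Lieq)$ by $\Phi(d)=d^{*}$, where for $y\in\Lieq$ we pick $x\in\Lieg$ with $\eta'(\overline x)=\overline y$ and set $d^{*}(y)=\xi(d(x))$. Well-definedness is immediate: if $\eta'(\overline{x_1})=\eta'(\overline{x_2})$ then $x_1-x_2\in Z_{\Lie}(\Lieg)$, and since $d$ vanishes on \Lie-central elements, $d(x_1)=d(x_2)$. Clearly $d^{*}$ is linear, and $d^{*}(\Lieq)\subseteq\xi([\Lieg,\Lieg]_{\Lie})=[\Lieq,\Lieq]_{\Lie}=\gamma_2^{\Lie}(\Lieq)$. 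Next I would check that $d^{*}$ vanishes on $Z_{\Lie}(\Lieq)$: if $y\in Z_{\Lie}(\Lieq)$ then by Proposition~\ref{Lie-isoclinism}(a), $\overline y=\eta'(\overline x)$ forces $\overline x\in\eta'^{-1}(0)=0$, i.e. $x\in Z_{\Lie}(\Lieg)$, so $d(x)=0$ and $d^{*}(y)=0$.

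Then I would verify that $d^{*}$ is a \Lie-derivation. For $y_1,y_2\in\Lieq$, write $\overline{y_i}=\eta'(\overline{x_i})$; using $\xi([x_1,x_2]_{lie})=[y_1,y_2]_{lie}$ (valid up to \Lie-central correction, which is killed anyway since both sides land in $[\Lieg,\Lieg]_{\Lie}$ resp. $[\Lieq,\Lieq]_{\Lie}$) together with the fact that $d([x_1,x_2]_{lie})=[d(x_1),x_2]_{lie}+[x_1,d(x_2)]_{lie}$, apply $\xi$ to both sides. The left side becomes $d^{*}([y_1,y_2]_{lie})$; the right side is $\xi([d(x_1),x_2]_{lie})+\xi([x_1,d(x_2)]_{lie})=[\xi(d(x_1)),y_2]_{lie}+[y_1,\xi(d(x_2))]_{lie}=[d^{*}(y_1),y_2]_{lie}+[y_1,d^{*}(y_2)]_{lie}$, where the middle equality again uses the compatibility of $\xi$ with the commutator maps (note $d(x_i)\in[\Lieg,\Lieg]_{\Lie}$, so $\xi$ is defined on it). This shows $d^{*}\in{\sf ID}_*^{\Lie}(\Lieq)$. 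Compatibility with the Lie bracket $[d_1,d_2]=d_1d_2-d_2d_1$ follows because $d_j$ kills $[\Lieg,\Lieg]_{\Lie}\supseteq\Image(d_i)$, so $[d_1,d_2]=0$ and likewise $[d_1^{*},d_2^{*}]=0$ (each $d_i^{*}$ has image in $[\Lieq,\Lieq]_{\Lie}\subseteq Z_{\Lie}(\Lieq)$? — not in general, so one must instead argue $d_j^{*}(\Image d_i^{*})=0$ directly from $d_j^{*}$ vanishing on $[\Lieq,\Lieq]_{\Lie}$... wait, $d_j^{*}$ vanishes only on $Z_{\Lie}(\Lieq)$, not on $[\Lieq,\Lieq]_{\Lie}$). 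I expect the \textbf{main obstacle} to be precisely this last point: showing $\Phi$ is a Lie homomorphism, since ${\sf ID}_*^{\Lie}$-derivations need not have central image. The resolution is to observe that $[d_1,d_2]=d_1d_2-d_2d_1$ already lies in ${\sf ID}_*^{\Lie}(\Lieg)$, and by construction $\Phi([d_1,d_2])$ depends only on the induced map $\Lieg/Z_{\Lie}(\Lieg)\to[\Lieg,\Lieg]_{\Lie}$; since $d_i$ factors through $\Lieg/Z_{\Lie}(\Lieg)$ while $d_j d_i$ sends everything into $d_j([\Lieg,\Lieg]_{\Lie})$, one relates $\overline{[d_1,d_2]}$ to $\overline{d_1^{*}d_2^{*}-d_2^{*}d_1^{*}}$ through $\eta'$ and $\xi$ termwise; the verification that $\xi\circ(d_1d_2-d_2d_1)=(d_1^{*}d_2^{*}-d_2^{*}d_1^{*})\circ\eta'$ is a direct diagram chase using $d_i^{*}\circ\eta'=\xi\circ d_i$ and the factorization. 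Finally, $\Phi$ is injective because $\xi$ is an isomorphism and $d$ is determined by $\bar d$; and $\Phi$ is surjective by symmetry, applying the same construction to the inverse \Lie-isoclinism $(\eta^{-1},\xi^{-1})\colon\Lieq\sim\Lieg$ and checking the two constructions are mutually inverse. This yields the claimed isomorphism ${\sf ID}_*^{\Lie}(\Lieg)\cong{\sf ID}_*^{\Lie}(\Lieq)$.
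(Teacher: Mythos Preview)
Your proposal is correct and follows essentially the same route as the paper: define $d^{*}(y)=\xi(d(x))$ for any lift $x$ of $\eta'^{-1}(\overline y)$, verify it lands in ${\sf ID}_*^{\Lie}(\Lieq)$, and obtain the inverse from the inverse isoclinism $(\eta^{-1},\xi^{-1})$. Your false start on the Lie-homomorphism check (the momentary claim that $d_j$ kills $[\Lieg,\Lieg]_{\Lie}$, true only for \Lie-central derivations) is harmless since you immediately correct it; the ``diagram chase'' you describe via $d_i^{*}\circ\eta'=\xi\circ d_i$ is exactly what the paper does, invoking \cite[Prop.~3.8]{BC} for the compatibility $\overline{\xi(u)}=\eta'(\overline u)$ when $u\in\gamma_2^{\Lie}(\Lieg)$, which is the one ingredient you use implicitly but do not name.
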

\begin{proof}
 Let  $(\eta,\xi)$ be the \Lie-isoclinism between $\Lieg$ and $\Lieq$  and let $\alpha\in {\sf ID}_*^{\Lie}(\Lieg).$ Consider the map $\zeta_{\alpha}:\Lieq\to\Lieq$ defined by $\zeta_{\alpha}(y):=\xi(\alpha(x)),$ where $y+ Z_{\Lie}(\Lieq)=\eta(x+ Z_{\Lie}(\Lieg)).$ Clearly $\zeta_{\alpha}$ is a well-defined linear map since $\alpha$ and $\xi$ are linear maps, and  if $y\in Z_{\Lie}(\Lieq),$ then $x\in  Z_{\Lie}(\Lieg)$ and thus $\zeta_{\alpha}(y)=\xi(\alpha(x))=\zeta(0)=0.$ To show that $\zeta_{\alpha}$ is a \Lie-derivation, let $y_1,y_2\in\Lieq$ and $x_1,x_2\in\Lieg$ such that   $y_i+ Z_{\Lie}(\Lieq)=\eta(x_i+ Z_{\Lie}(\Lieg)),~i=1,2.$ Then
$$
\begin{aligned}
\zeta_{\alpha}([y_1,y_2]_{lie})&=\xi(\alpha([x_1,x_2]_{lie}))\\&=\xi([\alpha(x_1),x_2]_{lie})+\xi([x_1,\alpha(x_2)]_{lie})~~\mbox{by}~~\cite[Prop. ~ 3.8]{BC}\\
&=[\xi(\alpha(x_1)),y_2]_{lie}+[y_1,\xi(\alpha(x_2))]_{lie}\\&=[\zeta_{\alpha}(y_1),y_2]_{lie}+[y_1,\zeta_{\alpha}(y_2)]_{lie}.
\end{aligned}$$
Moreover, since $\alpha(\Lieg)\subseteq \gamma_2^{\Lie}(\Lieg)$ and $\xi$ is an isomorphism, it follows that  $\zeta_{\alpha}(\Lieq)\subseteq \gamma_2^{\Lie}(\Lieq).$ Therefore $\zeta_{\alpha}\in{\sf ID}_*^{\Lie}(\Lieq).$  Now consider the map $\zeta: {\sf ID}_*^{\Lie}(\Lieg)\to {\sf ID}_*^{\Lie}(\Lieq)$ defined by $\zeta(\alpha)=\zeta_{\alpha}.$ We claim that $\xi$ is a \Lie-homomorphism. Indeed, for $\alpha_1,\alpha_2\in{\sf ID}_*^{\Lie}(\Lieg) ,$ we have for all $y\in\Lieq,$ and $x\in\Lieg$ such that $y+ Z_{\Lie}(\Lieq)=\eta(x+ Z_{\Lie}(\Lieg)),$
$$
\begin{aligned}
\zeta([\alpha_1,\alpha_2])(y)&=\zeta_{[\alpha_1,\alpha_2]}(y)=\xi([\alpha_1,\alpha_2](x))=\xi(\alpha_1(\alpha_2(x))-\alpha_2(\alpha_1(x)))\\&=\xi(\alpha_1(\alpha_2(x)))-\xi(\alpha_2(\alpha_1(x)))\\&=\zeta_{\alpha_1}
(\xi(\alpha_2(x))-\zeta_{\alpha_2}(\xi(\alpha_1(x))~~\mbox{by}~~\cite[Prop. ~ 3.8]{BC}\\&=\zeta_{\alpha_1}(\zeta_{\alpha_2}(y))-\zeta_{\alpha_2}(\zeta_{\alpha_1}(y))=[\zeta_{\alpha_1},\zeta_{\alpha_2}](y)=[\zeta(\alpha_1),\zeta(\alpha_2)](y).
\end{aligned}$$
Hence $\zeta([\alpha_1,\alpha_2])=[\zeta(\alpha_1),\zeta(\alpha_2)].$ Conversely, let $\beta\in {\sf ID}_*^{\Lie}(\Lieq). $ By using the inverse \Lie-isoclinism  $(\eta^{-1},\xi^{-1}),$ we similarly construct a homomorphism $\zeta': {\sf ID}_*^{\Lie}(\Lieq)\to {\sf ID}_*^{\Lie}(\Lieg)$ defined by $\zeta'(\beta)=\zeta'_{\beta}$ where $\zeta'_{\beta}(x)=\xi^{-1}(\beta(y))$ with $y+ Z_{\Lie}(\Lieq)=\eta(x+ Z_{\Lie}(\Lieg)).$ It is clear that $(\zeta'\circ\zeta)(\alpha)(x)=\zeta'(\zeta(\alpha))(x)=\zeta'_{\zeta(\alpha)}(x)=\xi^{-1}(\zeta(\alpha)(y))=\xi^{-1}(\zeta_{\alpha}(y))=\xi^{-1}(\xi(\alpha(x)))=\alpha(x).$ So $\zeta'\circ\zeta=\id_{{\sf ID}_*^{\Lie}(\Lieg)}.$ Similarly, one shows that $\zeta\circ\zeta'=\id_{{\sf ID}_*^{\Lie}(\Lieq)}.$ Therefore ${\sf ID}_*^{\Lie}(\Lieg) \cong {\sf ID}_*^{\Lie}(\Lieq)$.
\end{proof}

\begin{Co}
Let $\Lieg$ and $\Lieq$ be two $\Lie$-isoclinic Leibniz algebras. Then ${\Der}_c^{\Lie}(\Lieg) \cong {\Der}_c^{\Lie}(\Lieq)$.
\end{Co}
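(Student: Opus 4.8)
The plan is to leverage the isomorphism of Theorem~\ref{IDstar}. By the chain of inclusions~(\ref{inclusion}), every almost inner $\Lie$-derivation lies in ${\sf ID}_*^{\Lie}$, so the natural strategy is to show that the isomorphism $\zeta : {\sf ID}_*^{\Lie}(\Lieg) \to {\sf ID}_*^{\Lie}(\Lieq)$ constructed in the proof of Theorem~\ref{IDstar} carries the subalgebra ${\Der}_c^{\Lie}(\Lieg)$ onto ${\Der}_c^{\Lie}(\Lieq)$. First I would recall that $\zeta$ and the map $\zeta'$ built from the inverse $\Lie$-isoclinism $(\eta^{-1},\xi^{-1})$ are mutually inverse Lie-algebra homomorphisms; hence it suffices to check the two inclusions $\zeta({\Der}_c^{\Lie}(\Lieg)) \subseteq {\Der}_c^{\Lie}(\Lieq)$ and $\zeta'({\Der}_c^{\Lie}(\Lieq)) \subseteq {\Der}_c^{\Lie}(\Lieg)$, which together force $\zeta$ to restrict to a bijection between the two subalgebras, and therefore to the required isomorphism.

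The core computation is this. Let $(\eta,\xi)$ be the $\Lie$-isoclinism between $\Lieg$ and $\Lieq$, take $\alpha \in {\Der}_c^{\Lie}(\Lieg)$ and $y \in \Lieq$, and write $y + Z_{\Lie}(\Lieq) = \eta(x + Z_{\Lie}(\Lieg))$ for some $x \in \Lieg$. I would first observe that $[x,\Lieg]_{lie}$ is in fact a vector subspace, since $[x,-]_{lie}$ is linear, so the hypothesis $\alpha(x) \in [x,\Lieg]_{lie}$ means $\alpha(x) = [x,g]_{lie}$ for a single $g \in \Lieg$. Then $\zeta_\alpha(y) = \xi(\alpha(x)) = \xi([x,g]_{lie})$, and I would apply the commutativity of the $\Lie$-isoclinism square~(\ref{square isoclinic}) to the $\Lie$-commutator maps of the central extensions $(e_g)$ and $(e_q)$ --- equivalently \cite[Prop.~3.8]{BC}, already invoked in the proof of Theorem~\ref{IDstar} --- to rewrite it as $[y,g']_{lie}$, where $g'$ is any element of $\Lieq$ with $g' + Z_{\Lie}(\Lieq) = \eta(g + Z_{\Lie}(\Lieg))$. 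This exhibits $\zeta_\alpha(y) \in [y,\Lieq]_{lie}$, hence $\zeta_\alpha \in {\Der}_c^{\Lie}(\Lieq)$; the symmetric inclusion follows verbatim with $(\eta^{-1},\xi^{-1})$, and the stated isomorphism ${\Der}_c^{\Lie}(\Lieg) \cong {\Der}_c^{\Lie}(\Lieq)$ results.

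The main --- and essentially the only --- obstacle is the bookkeeping with the isoclinism square: one must verify that the representative $y$ used in the definition of $\zeta_\alpha(y)$ is precisely the $\eta$-image of the class of $x$, so that $\xi([x,g]_{lie})$ comes out as a genuine bracket $[y,g']_{lie}$ with $y$ sitting in the first slot, rather than merely agreeing with such a bracket modulo $Z_{\Lie}(\Lieq)$. Once this identification is pinned down, nothing further is needed: ${\Der}_c^{\Lie}$ is a subalgebra of ${\sf ID}_*^{\Lie}$ and $\zeta$ is already known from Theorem~\ref{IDstar} to be a Lie homomorphism, so its restriction is automatically a Lie-algebra isomorphism.
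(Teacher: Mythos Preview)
Your proposal is correct and follows essentially the same route as the paper: restrict the isomorphism $\zeta$ from Theorem~\ref{IDstar} to ${\Der}_c^{\Lie}(\Lieg)$, verify via the isoclinism square that $\zeta_\alpha(y)=\xi([x,g]_{lie})=[y,g']_{lie}\in[y,\Lieq]_{lie}$, and use the inverse isoclinism $(\eta^{-1},\xi^{-1})$ for the reverse inclusion. If anything, you are more explicit than the paper, which simply asserts that $\zeta_\alpha(y)\in[y,\Lieq]_{lie}$ ``is clear'' from $\alpha(x)\in[x,\Lieg]_{lie}$ and the fact that $\xi$ is an isomorphism.
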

\begin{proof}
 Let  $(\eta,\xi)$ be the \Lie-isoclinism between $\Lieg$ and $\Lieq$  and let $\alpha\in {\Der}_c^{\Lie}(\Lieg).$ Consider again the map  $\zeta_{\alpha}:\Lieq\to\Lieq$ defined by $\zeta_{\alpha}(y):=\xi(\alpha(x)),$ where $y+ Z_{\Lie}(\Lieq)=\eta(x+ Z_{\Lie}(\Lieg)),$ given in the proof of Theorem \ref{IDstar}.  Since $\alpha(x)\in[x,\Lieg]_{lie}$ and $\xi$ is an isomorphism, it is clear that $\zeta_{\alpha}(y)\in[y,\Lieq]_{lie}$   for all  $y\in\Lieq.$
So $\zeta_{\alpha}\in {\Der}_c^{\Lie}(\Lieq).$  So the restriction $\zeta_{\mid {\Der}_c^{\Lie}(\Lieg) }: {\Der}_c^{\Lie}(\Lieg) \to {\Der}_c^{\Lie}(\Lieq)$ of the map $\zeta$ in the proof of Theorem \ref{IDstar}  to ${\Der}_c^{\Lie}(\Lieg)$ is a homomorphism. Similarly, by using the inverse \Lie-isoclinism  $(\eta^{-1},\xi^{-1}),$ one obtains a homomorphism by taking  the restriction $\zeta'_{\mid {\Der}_c^{\Lie}(\Lieq) }: {\Der}_c^{\Lie}(\Lieq) \to {\Der}_c^{\Lie}(\Lieg)$ of the map $\zeta'$ in the proof of Theorem \ref{IDstar}  to ${\Der}_c^{\Lie}(\Lieq).$ It is clear that   $\zeta\circ\zeta'_{\mid {\Der}_c^{\Lie}(\Lieq) }=\id_{{\Der}_c^{\Lie}(\Lieq) }$ and  $\zeta'\circ\zeta_{\mid {\Der}_c^{\Lie}(\Lieg) }=\id_{{\Der}_c^{\Lie}(\Lieg) }.$  Therefore ${\Der}_c^{\Lie}(\Lieg) \cong {\Der}_c^{\Lie}(\Lieq)$.
\end{proof}
\bigskip

For any $d \in {\Der}_z^{\Lie}(\Lieg)$, the map $\psi_d : \frac{\Lieg}{\gamma_2^{\Lie}(\Lieg)} \to Z_{\Lie}(\Lieg)$ given by $\psi_d(g+\gamma_2^{\Lie}(\Lieg)) = d(g)$ is a linear map. It is easy to show that the linear map $\psi : {\Der}_z^{\Lie}(\Lieg) \to T\left(\frac{\Lieg}{\gamma_2^{\Lie}(\Lieg)}, Z_{\Lie}(\Lieg)\right)$, $\psi(d) = \psi_d$ is bijective. Therefore, dim$ \left( {\Der}_z^{\Lie}(\Lieg) \right)$ = dim $\left( T\left(\frac{\Lieg}{\gamma_2^{\Lie}(\Lieg)}, Z_{\Lie}(\Lieg)\right) \right)$ for any finite-dimensional Leibniz algebra $\Lieg$.

\begin{Co} \label{5.4}
Let $\Lieg$ be a finite-dimensional Leibniz algebra such that  $[\Lieg, \Lieg] = \gamma_2^{\Lie}(\Lieg)$ and  $Z_{\Lie}(\Lieg) \subseteq  Z^r(\Lieg)$. Then ${\sf ID}_*^{\Lie}(\Lieg) = {\Der}_z^{\Lie}(\Lieg)$ if and only if $\gamma_2^{\Lie}(\Lieg) = Z_{\Lie}(\Lieg)$.
\end{Co}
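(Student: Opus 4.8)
The statement relates the equality $\mathsf{ID}_*^{\Lie}(\Lieg) = \Der_z^{\Lie}(\Lieg)$ to the equality $\gamma_2^{\Lie}(\Lieg) = Z_{\Lie}(\Lieg)$ under the hypotheses $[\Lieg,\Lieg] = \gamma_2^{\Lie}(\Lieg)$ and $Z_{\Lie}(\Lieg) \subseteq Z^r(\Lieg)$. The plan is to prove both implications by comparing the two subalgebras of $\Der^{\Lie}(\Lieg)$ directly, using the defining conditions: an element of $\Der_z^{\Lie}(\Lieg)$ has image in $Z_{\Lie}(\Lieg)$ and kills $\gamma_2^{\Lie}(\Lieg)$, while an element of $\mathsf{ID}_*^{\Lie}(\Lieg)$ has image in $\gamma_2^{\Lie}(\Lieg)$ and kills $Z_{\Lie}(\Lieg)$. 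Note that $\Lie$-nilpotence of class $2$ is equivalent (via Theorem \ref{2.4}) to $\gamma_3^{\Lie}(\Lieg)=0$ together with $\gamma_2^{\Lie}(\Lieg)\ne 0$; I would first observe that $[\Lieg,\Lieg]=\gamma_2^{\Lie}(\Lieg)$ forces $\gamma_2^{\Lie}(\Lieg)\subseteq Z^r(\Lieg)\cap(\text{\dots})$ — more precisely, under the stated hypotheses one should check that $\Lieg$ is $\Lie$-nilpotent of class $2$, since then Corollary \ref{abelian der} and Lemma \ref{gammaK} become available.

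\medskip

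\textbf{The easy direction ($\Leftarrow$).} Suppose $\gamma_2^{\Lie}(\Lieg) = Z_{\Lie}(\Lieg)$. Then a $\Lie$-derivation with image in $\gamma_2^{\Lie}(\Lieg)$ is exactly one with image in $Z_{\Lie}(\Lieg)$, i.e. a $\Lie$-central derivation; and such a derivation automatically kills $\gamma_2^{\Lie}(\Lieg)=Z_{\Lie}(\Lieg)$, so it lies in $\mathsf{ID}_*^{\Lie}(\Lieg)$ as well. Conversely any element of $\mathsf{ID}_*^{\Lie}(\Lieg)$ has image in $\gamma_2^{\Lie}(\Lieg)=Z_{\Lie}(\Lieg)$, hence is $\Lie$-central. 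So the two sets coincide. This direction uses only the hypothesis $\gamma_2^{\Lie}(\Lieg)=Z_{\Lie}(\Lieg)$, not the side conditions; one should remark that under this equality $\Lieg$ is indeed $\Lie$-stem, so Proposition \ref{3.2} applies if one wants to phrase things through abelianness.

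\medskip

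\textbf{The hard direction ($\Rightarrow$).} Assume $\mathsf{ID}_*^{\Lie}(\Lieg) = \Der_z^{\Lie}(\Lieg)$; I want $\gamma_2^{\Lie}(\Lieg) = Z_{\Lie}(\Lieg)$. One inclusion is cheap: every $d\in \Der_z^{\Lie}(\Lieg)$ kills $\gamma_2^{\Lie}(\Lieg)$, so if additionally $d\in\mathsf{ID}_*^{\Lie}(\Lieg)$ one gets $d(\Lieg)\subseteq \gamma_2^{\Lie}(\Lieg)$; playing this against the description $\Der_z^{\Lie}(\Lieg)\cong T(\Lieg/\gamma_2^{\Lie}(\Lieg), Z_{\Lie}(\Lieg))$ and $\mathsf{ID}_*^{\Lie}(\Lieg)$ sitting inside $\mathsf{ID}^{\Lie}(\Lieg)$, one compares dimensions. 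Concretely, I would argue by contradiction: if $Z_{\Lie}(\Lieg)\subsetneq \gamma_2^{\Lie}(\Lieg)$ (the inclusion $Z_{\Lie}(\Lieg)\subseteq\gamma_2^{\Lie}(\Lieg)$ comes from $\Lie$-stemness, which follows once we know class $2$, using $Z_{\Lie}(\Lieg)=\gamma_2^{\Lie}(\Lieg)$... no — rather use that under our hypotheses $\Lie$-nilpotence of class $2$ gives $\gamma_2^{\Lie}(\Lieg)\subseteq Z_{\Lie}(\Lieg)$ directly), pick $z\in Z_{\Lie}(\Lieg)$, $z\notin\gamma_2^{\Lie}(\Lieg)$, and build a $\Lie$-central derivation $d$ supported on a complement direction sending a basis vector to $z$; such $d$ has image in $Z_{\Lie}(\Lieg)$ but \emph{not} in $\gamma_2^{\Lie}(\Lieg)$, so $d\notin\mathsf{ID}^{\Lie}(\Lieg)\supseteq\mathsf{ID}_*^{\Lie}(\Lieg)$, contradicting $\Der_z^{\Lie}(\Lieg)=\mathsf{ID}_*^{\Lie}(\Lieg)$. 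This shows $Z_{\Lie}(\Lieg)\subseteq\gamma_2^{\Lie}(\Lieg)$; for the reverse, use an element $T\in\mathsf{ID}_*^{\Lie}(\Lieg)=\Der_z^{\Lie}(\Lieg)$ argument together with Lemma \ref{gammaK} ($\gamma_2^{\Lie}(\Lieg)=K(\Lieg)$, valid once class $2$ is established), exactly as in the proof of Theorem \ref{3.10}: any $\Lie$-central $d$ lands in $Z_{\Lie}(\Lieg)$, and forcing it to also be an $\mathsf{ID}_*$-derivation pins $Z_{\Lie}(\Lieg)$ down to $\gamma_2^{\Lie}(\Lieg)$ via the dimension count $\dim\Der_z^{\Lie}(\Lieg) = \dim T(\Lieg/\gamma_2^{\Lie}(\Lieg),Z_{\Lie}(\Lieg))$ established just before the corollary.

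\medskip

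\textbf{Main obstacle.} The delicate point is verifying that the two hypotheses $[\Lieg,\Lieg]=\gamma_2^{\Lie}(\Lieg)$ and $Z_{\Lie}(\Lieg)\subseteq Z^r(\Lieg)$ genuinely force the $\Lie$-nilpotent-of-class-$2$ situation in which the structural results (Corollary \ref{abelian der}, Lemma \ref{gammaK}, the dimension formula) can be invoked — or, if they do not, to run the dimension comparison between $\Der_z^{\Lie}(\Lieg)\cong T(\Lieg/\gamma_2^{\Lie}(\Lieg),Z_{\Lie}(\Lieg))$ and $\mathsf{ID}_*^{\Lie}(\Lieg)$ by hand without that crutch. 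The condition $Z_{\Lie}(\Lieg)\subseteq Z^r(\Lieg)$ is presumably what guarantees that $\Lie$-central derivations and honest $\mathsf{ID}_*$-derivations interact correctly with the right-bracket (so that $d(x)\in[x,\Lieg]_{lie}$ manipulations are legitimate), and getting that bookkeeping exactly right — especially making sure a putative $\Lie$-central derivation of image not in $\gamma_2^{\Lie}(\Lieg)$ really does fail to be an $\mathsf{ID}$-derivation — is where I expect the real work to be. Once the class-$2$ reduction is in place, the contradiction argument in the $(\Rightarrow)$ direction is essentially the combination of Proposition \ref{3.2}, Corollary \ref{abelian der}, and the explicit bijection $\psi$, and the remaining steps are routine.
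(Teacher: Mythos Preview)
Your easy direction is fine and matches the paper. The hard direction has a genuine gap: you never establish that $\Lieg$ is $\Lie$-nilpotent of class $2$, and your attempt to invoke Lemma~\ref{gammaK} and Theorem~\ref{3.10} is circular, since both require class $2$, i.e.\ $\gamma_2^{\Lie}(\Lieg)\subseteq Z_{\Lie}(\Lieg)$, which is exactly the inclusion you are still missing. You also misdiagnose the obstacle: the two side hypotheses $[\Lieg,\Lieg]=\gamma_2^{\Lie}(\Lieg)$ and $Z_{\Lie}(\Lieg)\subseteq Z^r(\Lieg)$ do \emph{not} by themselves force class $2$; one needs the equality $\mathsf{ID}_*^{\Lie}(\Lieg)=\Der_z^{\Lie}(\Lieg)$ as well.

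The missing idea is to use the inner derivations $R_x$. The hypothesis $[\Lieg,\Lieg]=\gamma_2^{\Lie}(\Lieg)$ gives $R_x(\Lieg)\subseteq\gamma_2^{\Lie}(\Lieg)$, and $Z_{\Lie}(\Lieg)\subseteq Z^r(\Lieg)$ gives $R_x(Z_{\Lie}(\Lieg))=0$ (if $z\in Z_{\Lie}(\Lieg)\subseteq Z^r(\Lieg)$ then $[x,z]=0$, and $[z,x]_{lie}=0$ forces $[z,x]=0$). Hence every $R_x$ lies in $\mathsf{ID}_*^{\Lie}(\Lieg)=\Der_z^{\Lie}(\Lieg)$, so $[y,x]\in Z_{\Lie}(\Lieg)$ for all $x,y$, and therefore $\gamma_2^{\Lie}(\Lieg)=[\Lieg,\Lieg]\subseteq Z_{\Lie}(\Lieg)$. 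This is precisely the inclusion you could not get, and it simultaneously yields class $2$. From here the paper does \emph{not} argue directly: it passes to a $\Lie$-stem algebra $\Lieq$ via $\Lie$-isoclinism, uses Theorem~\ref{IDstar} to transport $\mathsf{ID}_*^{\Lie}$, applies Lemma~\ref{DT} on $\Lieq$ (where $Z_{\Lie}(\Lieq)=\gamma_2^{\Lie}(\Lieq)$), and then a dimension chain ending with Theorem~\ref{3.10} shows $\dim\Der_z^{\Lie}(\Lieg)=\dim Z(\Der_z^{\Lie}(\Lieg))$, so $\Der_z^{\Lie}(\Lieg)$ is abelian and Corollary~\ref{abelian der} finishes. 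Your direct construction for $Z_{\Lie}(\Lieg)\subseteq\gamma_2^{\Lie}(\Lieg)$ is a valid alternative to part of this, but without the $R_x$ step you have no route to the other inclusion.
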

\begin{proof}
 Assume that $\gamma_2^{\Lie}(\Lieg) = Z_{\Lie}(\Lieg)$. It is clear that for all $d\in{\Der}_z^{\Lie}(\Lieg),$ $d(\Lieg)\subseteq Z_{\Lie}(\Lieg) \iff d(\Lieg)\subseteq \gamma_2^{\Lie}(\Lieg)$ and
$d(Z_{\Lie}(\Lieg))=d(\gamma_2^{\Lie}(\Lieg))=0.$ Therefore ${\sf ID}_*^{\Lie}(\Lieg) = {\Der}_z^{\Lie}(\Lieg).$

Conversely,  assume that ${\sf ID}_*^{\Lie}(\Lieg) = {\Der}_z^{\Lie}(\Lieg).$ Then since  $[\Lieg, \Lieg] = \gamma_2^{\Lie}(\Lieg)$ and $Z_{\Lie}(\Lieg) \subseteq  Z^r(\Lieg)$,  it follows that  the map $R_x  : \Lieg \to \Lieg, R_x(y)=[y, x],$ is a {\Lie}-derivation; moreover  it is easy to check that $R_x \in {\sf ID}_*^{\Lie}(\Lieg) = {\Der}_z^{\Lie}(\Lieg)$,  hence $R_{x}(y)\in Z_{\Lie}(\Lieg),$ for all $y\in\Lieg.$ Therefore $ {\ze}_{2}^{\Lie}({\Lieg}) =\Lieg,$ and thus $\Lieg$ is \Lie-nilpotent of class 2 by Theorem \ref{2.4}. Now, by   \cite[Corollary 4.1]{BC2}, there is a $\Lie$-stem Leibniz algebra $\Lieq$ \Lie-isoclinic to $\Lieg.$  Denote this \Lie-isoclinism by $(\eta,\xi).$ Since $\Lieg$ is $\Lie$-nilpotent Leibniz algebra of class 2, so is $\Lieq,$ thanks to Lemma \ref{gNilq}. This implies that $ [\Lieg,\Lieg]_{\Lie}\stackrel{\xi}\cong  [\Lieq,\Lieq]_{\Lie}=Z_{\Lie}(\Lieq),$ and $\frac{\Lieg}{Z_{\Lie}(\Lieg)}\stackrel{\eta}\cong\frac{\Lieq}{Z_{\Lie}(\Lieq)}\cong \frac{ \Lieq}{[\Lieq,\Lieq]_{\Lie}}.$ It follows by  Theorem \ref{IDstar}, the first implication  and Lemma \ref{DT} that
$$
\begin{aligned}
{\rm dim}({\Der}_z^{\Lie}(\Lieg))&={\rm dim}({\sf ID}_*^{\Lie}(\Lieg))={\rm dim}({\sf ID}_*^{\Lie}(\Lieq))={\rm dim}({\Der}_z^{\Lie}(\Lieq))\\&={\rm dim}\left(T\left(\frac{\Lieq}{[\Lieq, \Lieq]_{\Lie} }, Z_{\Lie}(\Lieq)\right)\right)={\rm dim}\left(T\left(\frac{\Lieq}{Z_{\Lie}(\Lieq)}, [\Lieq, \Lieq]_{\Lie}\right)\right)\\&= {\rm dim}\left(T\left(\frac{\Lieg}{Z_{\Lie}(\Lieg)}, [\Lieg, \Lieg]_{\Lie}\right)\right)= {\rm dim}\left(Z({\Der}_z^{\Lie}(\Lieg)) \right).
\end{aligned}$$ The latter equality is due to Theorem \ref{3.10} since $\Lieg$ is \Lie-nilpotent of class 2. Therefore ${\Der}_z^{\Lie}(\Lieg)$ is abelian. We now conclude by Corollary \ref{abelian der} that $\gamma_2^{\Lie}(\Lieg)= Z_{\Lie}(\Lieg)$.
\end{proof}

\begin{Rem}
Let us observe that the requirements $[\Lieg, \Lieg] = \gamma_2^{\Lie}(\Lieg)$ and  $Z_{\Lie}(\Lieg) \subseteq  Z^r(\Lieg)$ in Corollary \ref{5.4} are not needed in the absolute case, but in our relative setting they are absolutely necessary as the following counterexample shows:
let $\Lieg$ be the four-dimensional complex Leibniz algebra with basis $\{ a_1, a_2, a_3, a_4 \}$ and bracket operation given by $[a_1, a_2] = - [a_2, a_1] = a_4; [a_3,a_3] = a_4$ and zero elsewhere (class $\frak{R}_{21}$ in \cite[Theorem 3.2]{AOR}). It is easy to check that $[\Lieg, \Lieg] = \langle \{ a_4 \} \rangle = \gamma_2^{\Lie}(\Lieg)$, $Z_{\Lie}(\Lieg) = \langle \{ a_1, a_2, a_4 \} \rangle$ and $Z^{r}(\Lieg) = \langle \{ a_4 \} \rangle$.

Consider the {\Lie}-derivation $R_{a_1}$, which belongs to  ${\sf Der}_z^{\Lie}(\Lieg)$. However $R_{a_1} \notin {\sf ID}_*^{\Lie}(\Lieg)$ since $R_{a_1}$ doesn't vanish on $Z_{\Lie}(\Lieg)$.
\end{Rem}

\begin{Ex}
The three-dimensional complex Leibniz algebra with basis $\{ a_1, a_2, a_3 \}$ and bracket operation given by $[a_2, a_2] = \gamma a_1, \gamma \in \mathbb{C}; [a_3,a_2] =  [a_3, a_3] = a_1$ and zero elsewhere (class 2 (a) in \cite{CILL}) satisfies the requirements of Corollary \ref{5.4}, since $[\Lieg, \Lieg]  = \gamma_2^{\Lie}(\Lieg) = Z_{\Lie}(\Lieg) = Z^r(\Lieg)  = \langle \{ a_1 \} \rangle$.

\end{Ex}

\begin{Th} \label{inequality}
Let $\Lieg$ be a Leibniz algebra such that $\gamma_2^{\Lie}(\Lieg)$ is finite dimensional and $\frac{\Lieg}{Z_{\Lie}(\Lieg)}$ is generated by $p$ elements. Then
$${\rm dim}({\sf ID}_*^{\Lie}(\Lieg)) \leq p \cdot {\rm dim}(\gamma_2^{\Lie}(\Lieg))$$
\end{Th}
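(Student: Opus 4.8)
The plan is to fix lifts $x_1,\dots,x_p\in\Lieg$ of a generating set of $\Lieg/Z_{\Lie}(\Lieg)$ and to show that the evaluation map
\[
\Psi\colon {\sf ID}_*^{\Lie}(\Lieg)\longrightarrow \bigoplus_{i=1}^{p}\gamma_2^{\Lie}(\Lieg),\qquad \Psi(d)=\bigl(d(x_1),\dots,d(x_p)\bigr),
\]
is an injective $\mathbb{K}$-linear map; as its target has dimension $p\cdot\dim\gamma_2^{\Lie}(\Lieg)$, the inequality follows at once. (One may first pass, via Theorem \ref{IDstar} and Proposition \ref{Lie-isoclinism}, to a $\Lie$-stem algebra $\Lie$-isoclinic to $\Lieg$, since $\dim{\sf ID}_*^{\Lie}$, the number of generators of $\Lieg/Z_{\Lie}(\Lieg)$, and $\dim\gamma_2^{\Lie}$ are all $\Lie$-isoclinism invariants, but this is not strictly needed.)

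First I would record two facts. \emph{(i)} $\gamma_2^{\Lie}(\Lieg)=[\Lieg,\Lieg]_{\Lie}={\Lieg}^{\rm ann}$: indeed each $[x,y]_{lie}=[x+y,x+y]-[x,x]-[y,y]$ lies in ${\Lieg}^{\rm ann}$, each $[x,x]=\frac12[x,x]_{lie}$ lies in the span of the $[m,n]_{lie}$ (here $\frac12\in\mathbb{K}$ is used), and ${\Lieg}^{\rm ann}$ is a two-sided ideal. Moreover $[g,w]=0$ for all $g\in\Lieg$, $w\in{\Lieg}^{\rm ann}$, since $[g,[x,x]]=[[g,x],x]-[[g,x],x]=0$. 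Consequently, for $d\in{\sf ID}_*^{\Lie}(\Lieg)$ we have $d(\Lieg)\subseteq{\Lieg}^{\rm ann}$, hence $[x,d(y)]=[y,d(x)]=0$ and the $\Lie$-derivation identity simplifies to $d([x,y]_{lie})=[d(x),y]+[d(y),x]$. \emph{(ii)} Writing $\Lieh=\langle x_1,\dots,x_p\rangle$ for the Leibniz subalgebra generated by the $x_i$, the Leibniz identity allows any iterated bracket to be rewritten as a combination of left-normed ones, so $\Lieh$ is spanned by the elements $[\,[\cdots[x_{i_1},x_{i_2}]\cdots],x_{i_k}]$; and $\Lieg=\Lieh+Z_{\Lie}(\Lieg)$.

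Now suppose $\Psi(d)=0$, i.e.\ $d(x_i)=0$ for all $i$. Since $d$ vanishes on $Z_{\Lie}(\Lieg)$, by \emph{(ii)} it is enough to prove that $d$ kills every left-normed bracket of the $x_i$, which I would do by induction on the length $k$ (the case $k=1$ being the hypothesis). For the inductive step the cleanest route is to first establish the stronger statement that \textbf{every} $d\in{\sf ID}_*^{\Lie}(\Lieg)$ satisfies $d([a,b])=[d(a),b]$ for all $a,b\in\Lieg$ --- i.e.\ such a $d$ is actually an absolute derivation whose values are left-annihilated by all of $\Lieg$. Setting $e(a,b):=d([a,b])-[d(a),b]\in{\Lieg}^{\rm ann}$, the simplified identity of \emph{(i)} gives $e(a,b)=-e(b,a)$, and applying $d$ to both sides of the Leibniz identity $[[a,b],c]=[a,[b,c]]+[[a,c],b]$ (using \emph{(i)} throughout) yields functional relations among the values of $e$ that I expect to force $e\equiv0$; a useful auxiliary remark is that $a\mapsto e(a,c)=[d,R_c](a)$ again lies in ${\sf ID}_*^{\Lie}(\Lieg)$, because ${\sf ID}_*^{\Lie}(\Lieg)$ is an ideal of $\Der^{\Lie}(\Lieg)$ (a $\Lie$-derivation preserves both $\gamma_2^{\Lie}(\Lieg)$ and $Z_{\Lie}(\Lieg)$). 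Granting $d([a,b])=[d(a),b]$, the induction closes trivially: a length-$k$ left-normed bracket is $[w,x_j]$ with $d(w)=0$ by hypothesis, so $d([w,x_j])=[d(w),x_j]=0$; hence $d|_{\Lieh}=0$, so $d=0$ and $\Psi$ is injective.

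The main obstacle is exactly the identity $d([a,b])=[d(a),b]$ --- equivalently, showing that the antisymmetric residue $e$ vanishes. The simplified identity only controls $d$ on the subspace $[\Lieg,\Lieg]_{\Lie}+{\Lieg}^{\rm ann}$, not on individual non-symmetric brackets, so this is where the real work lies. If that identity turns out to fail in general, my fallback would be a filtration argument: from \emph{(i)} and $[\gamma_i^{\Lie}(\Lieg),\gamma_j^{\Lie}(\Lieg)]_{\Lie}\subseteq\gamma_{i+j}^{\Lie}(\Lieg)$ one gets $d(\gamma_k^{\Lie}(\Lieg))\subseteq\gamma_{k+1}^{\Lie}(\Lieg)$, so $d$ induces degree-$(+1)$ maps on the graded object of the lower $\Lie$-central filtration of $\Lieg/Z_{\Lie}(\Lieg)$, on which the full identity $d([x,y]_{lie})=[d(x),y]_{lie}+[x,d(y)]_{lie}$ genuinely determines $d$ by its values on algebra generators; summing the resulting rank bounds over the graded pieces recovers $\dim({\sf ID}_*^{\Lie}(\Lieg))\le p\cdot\dim(\gamma_2^{\Lie}(\Lieg))$.
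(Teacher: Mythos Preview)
You have over-read the hypothesis. In this paper ``$\Lieg/Z_{\Lie}(\Lieg)$ is generated by $p$ elements'' means generated \emph{as a $\mathbb{K}$-vector space}, i.e.\ $\dim\bigl(\Lieg/Z_{\Lie}(\Lieg)\bigr)=p$; this is how $p$ is used in the paper's own proof (where $\dim T(\Lieg/Z_{\Lie}(\Lieg),\gamma_2^{\Lie}(\Lieg))$ is equated with $p\cdot\dim\gamma_2^{\Lie}(\Lieg)$), in the examples that follow, and explicitly in the proof of Corollary~\ref{5.10}. With that reading the paper's argument is a one-line observation: since every $d\in{\sf ID}_*^{\Lie}(\Lieg)$ kills $Z_{\Lie}(\Lieg)$ and lands in $\gamma_2^{\Lie}(\Lieg)$, it factors uniquely through a linear map $d^*\colon \Lieg/Z_{\Lie}(\Lieg)\to\gamma_2^{\Lie}(\Lieg)$, and $d\mapsto d^*$ is an injection of ${\sf ID}_*^{\Lie}(\Lieg)$ into $T\bigl(\Lieg/Z_{\Lie}(\Lieg),\gamma_2^{\Lie}(\Lieg)\bigr)$. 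Your evaluation map $\Psi$ is exactly this map written in coordinates, and under the intended hypothesis its injectivity is immediate: $d(x_i)=0$ for all $i$ together with $d|_{Z_{\Lie}(\Lieg)}=0$ forces $d=0$ because $\Lieg=\langle x_1,\dots,x_p\rangle_{\mathbb{K}}+Z_{\Lie}(\Lieg)$ as a vector space.

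Everything from your point \emph{(ii)} onward --- the subalgebra $\Lieh$, left-normed brackets, the identity $d([a,b])=[d(a),b]$, the filtration fallback --- is aimed at a strictly stronger statement (with $p$ the minimal number of \emph{Leibniz-algebra} generators of $\Lieg/Z_{\Lie}(\Lieg)$) that the paper neither claims nor proves. You are right to flag the identity $d([a,b])=[d(a),b]$ as the obstacle there: a $\Lie$-derivation is only constrained on symmetrized brackets $[a,b]_{lie}$, so knowing $d$ on a set of algebra generators does not a priori determine $d$ on individual Leibniz brackets, and your cocycle $e(a,b)=d([a,b])-[d(a),b]$ need not vanish from the relations you list. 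Your filtration fallback has the same defect, since elements of $\Lieh$ are built from $[\,\cdot\,,\,\cdot\,]$, not from $[\,\cdot\,,\,\cdot\,]_{lie}$. None of this is needed for the theorem as stated; simply drop the algebra-generation detour and your $\Psi$ gives the paper's proof verbatim.
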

\begin{proof}
 Consider the map  $\alpha: {\sf ID}_*^{\Lie}(\Lieg) \to T\left(\frac{\Lieg}{Z_{\Lie}(\Lieg)}, \gamma_2^{\Lie}(\Lieg)\right)$ defined by $d \mapsto d^*$ such that $d^*(x+Z_{\Lie}(\Lieg))=d(x).$  Then $\alpha$ is a well-defined injective linear map. It follows that ${\rm dim}({\sf ID}_*^{\Lie}(\Lieg)) \leq  {\rm dim}\left(T\left(\frac{\Lieg}{Z_{\Lie}(\Lieg)}, \gamma_2^{\Lie}(\Lieg)\right)\right)=p \cdot {\rm dim}(\gamma_2^{\Lie}(\Lieg))$
\end{proof}

\begin{Ex}
 Now we present two examples illustrating the inequality in Theorem \ref{inequality}.
\begin{enumerate}
\item[(a)] Let $\Lieg$ be the three-dimensional Leibniz algebra with basis $\{a_1, a_2, a_3 \}$ and bracket operation given by $[a_2, a_3] = - [a_3, a_2] = a_2,  [a_3, a_3]=a_1$ and zero elsewhere (class 2 (f) in \cite{CILL}).

It is an easy task to check that $\frac{\Lieg}{Z_{\Lie}(\Lieg)} = \langle \{  \overline{a}_3 \} \rangle$, hence the number of generators is $p=1$. Moreover $\gamma_2^{\Lie}(\Lieg) = \langle \{ a_1 \}\rangle$.  Also it can be checked that an element $d \in {\sf ID}_*^{\Lie}(\Lieg)$  is represented by a matrix of the form $\left( \begin{array}{ccc} 0 & 0 & a_{13} \\ 0 & 0 & 0 \\ 0 &  0 & 0 \end{array} \right)$.  Hence  ${\rm dim}({\sf ID}_*^{\Lie}(\Lieg)) = 1 \leq 1 \cdot 1$.

\item[(b)] Let $\Lieg$ be the four-dimensional Leibniz algebra with basis $\{a_1, a_2, a_3, a_4 \}$ and bracket operation given by $[a_1, a_4]=a_1, [a_2, a_4]=a_2$ and zero elsewhere (class ${\cal R}_2$ in \cite[Theorem 2.7]{CK}).

It is an easy task to check that $\frac{\Lieg}{Z_{\Lie}(\Lieg)} = \langle \{ \overline{a}_1, \overline{a}_2,\overline{a}_4 \} \rangle$, hence the number of generators is $p=3$. Moreover $\gamma_2^{\Lie}(\Lieg) = \langle \{ a_1, a_2 \}\rangle$.  Also it can be checked that an element $d \in {\sf ID}_*^{\Lie}(\Lieg)$  is represented by a matrix of the form $\left( \begin{array}{cccc} a_{11} & a_{12}  & 0 & 0 \\ a_{21} & a_{22} & 0 &0  \\ 0 &  0 &0 & 0 \\ 0 &  0 &0 & 0  \end{array} \right)$.  Hence  ${\rm dim}({\sf ID}_*^{\Lie}(\Lieg)) = 4 \leq 3 \cdot 2$.
\end{enumerate}
\end{Ex}

\begin{Co} \label{bounds}
Let $\Lieg$ be a Leibniz algebra such that  $Z^r(\Lieg)  =  Z_{\Lie}(\Lieg)$, $[\Lieg, \Lieg] = \gamma_2^{\Lie}(\Lieg)$ is finite dimensional and   $\frac{\Lieg}{Z_{\Lie}(\Lieg)}$ is generated by $p$ elements. Then
$${\rm dim} \left( \frac{\Lieg}{Z_{\Lie}(\Lieg)} \right) \leq p \cdot {\rm dim}(\gamma_2^{\Lie}(\Lieg))$$
\end{Co}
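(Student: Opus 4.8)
The plan is to embed $\Lieg/Z_{\Lie}(\Lieg)$ as a vector subspace of ${\sf ID}_*^{\Lie}(\Lieg)$ and then read off the bound from Theorem~\ref{inequality}. The embedding should be given by right multiplications: for $x\in\Lieg$, consider $R_x\colon\Lieg\to\Lieg$, $R_x(y)=[y,x]$, which is already known to be a {\Lie}-derivation (recalled in Section~\ref{preliminaries}). The content of the corollary is that, under the present hypotheses, each $R_x$ in fact lies in ${\sf ID}_*^{\Lie}(\Lieg)$, and that the assignment $x\mapsto R_x$ has kernel precisely $Z_{\Lie}(\Lieg)$.

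First I would check the membership $R_x\in{\sf ID}_*^{\Lie}(\Lieg)$. Since $[\Lieg,\Lieg]=\gamma_2^{\Lie}(\Lieg)$, we get $R_x(\Lieg)=[\Lieg,x]\subseteq[\Lieg,\Lieg]=\gamma_2^{\Lie}(\Lieg)$, so $R_x$ is an ${\sf ID}$-{\Lie}-derivation. It remains to verify that $R_x$ vanishes on $Z_{\Lie}(\Lieg)$: if $z\in Z_{\Lie}(\Lieg)$ then $[g,z]+[z,g]=[g,z]_{lie}=0$ for every $g\in\Lieg$, while $z\in Z_{\Lie}(\Lieg)=Z^r(\Lieg)$ forces $[g,z]=0$ for all $g$; combining the two identities gives $[z,g]=0$ for all $g$, and in particular $R_x(z)=[z,x]=0$. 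Hence $R_x\in{\sf ID}_*^{\Lie}(\Lieg)$.

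Next, the map $\rho\colon\Lieg\to{\sf ID}_*^{\Lie}(\Lieg)$, $\rho(x)=R_x$, is $\mathbb{K}$-linear, and $\rho(x)=0$ means $[y,x]=0$ for all $y\in\Lieg$, i.e. $x\in Z^r(\Lieg)=Z_{\Lie}(\Lieg)$; thus $\ker\rho=Z_{\Lie}(\Lieg)$ and $\rho$ descends to a monomorphism of $\mathbb{K}$-vector spaces $\Lieg/Z_{\Lie}(\Lieg)\hookrightarrow{\sf ID}_*^{\Lie}(\Lieg)$. Consequently $\dim\bigl(\Lieg/Z_{\Lie}(\Lieg)\bigr)\le\dim\bigl({\sf ID}_*^{\Lie}(\Lieg)\bigr)$, and Theorem~\ref{inequality} — whose hypotheses (finite-dimensionality of $\gamma_2^{\Lie}(\Lieg)$ and $p$ generators for $\Lieg/Z_{\Lie}(\Lieg)$) are exactly those assumed here — gives $\dim\bigl({\sf ID}_*^{\Lie}(\Lieg)\bigr)\le p\cdot\dim(\gamma_2^{\Lie}(\Lieg))$. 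Chaining the two inequalities yields the claim.

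The only genuinely delicate point is the verification that $R_x$ annihilates $Z_{\Lie}(\Lieg)$: it uses the inclusion $Z_{\Lie}(\Lieg)\subseteq Z^r(\Lieg)$ in an essential way (to pass from $[g,z]_{lie}=0$ to $[z,g]=0$), while the reverse inclusion $Z^r(\Lieg)\subseteq Z_{\Lie}(\Lieg)$ is what identifies $\ker\rho$ with $Z_{\Lie}(\Lieg)$. Both are packaged into the single hypothesis $Z^r(\Lieg)=Z_{\Lie}(\Lieg)$, and, as the Remark following Corollary~\ref{5.4} shows, this relative hypothesis cannot be omitted. Everything else is routine linear-algebra bookkeeping, so I anticipate no further obstacle.
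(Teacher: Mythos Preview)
Your argument is correct and follows essentially the same route as the paper: embed $\Lieg/Z_{\Lie}(\Lieg)$ into ${\sf ID}_*^{\Lie}(\Lieg)$ via $x+Z_{\Lie}(\Lieg)\mapsto R_x$ (using $Z^r(\Lieg)=Z_{\Lie}(\Lieg)$ both for well-definedness and injectivity) and then invoke Theorem~\ref{inequality}. The only cosmetic difference is that the paper delegates the check $R_x\in{\sf ID}_*^{\Lie}(\Lieg)$ to the proof of Corollary~\ref{5.4}, whereas you verify it directly.
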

\begin{proof}
 Under these hypothesis, we have from the proof of Corollary \ref{5.4} that $R_x \in {\sf ID}_*^{\Lie}(\Lieg)$ for all $x \in \Lieg$. Now, consider the $\mathbb{K}$-linear map  $\beta :\frac{\Lieg}{Z_{\Lie}(\Lieg)}  \to {\sf ID}_*^{\Lie}(\Lieg)$ defined by $x+Z_{\Lie}(\Lieg) \mapsto R_x$, which is an injective well-defined linear map, since  $\Ker(\beta) = \frac{Z^r(\Lieg)}{Z_{\Lie}(\Lieg)} = 0$. Hence
${\rm dim} \left( \frac{\Lieg}{Z_{\Lie}(\Lieg)} \right) \leq {\rm dim}\left( {\sf ID}_*^{\Lie}(\Lieg) \right)$. Now Theorem \ref{inequality} completes the proof.
\end{proof}

\begin{Ex} \label{5.8}
The three-dimensional non-Lie Leibniz algebra with basis $\{a_1, a_2, a_3 \}$ and bracket operation $[a_3, a_3] = a_1$ and zero elsewhere, satisfies the requirements of Corollary \ref{bounds}.
\end{Ex}

\begin{De}
A Leibniz algebra $\Lieg$ of dimension $n$ is said to be $\Lie$-filiform  (or $1$-$\Lie$-filiform) if ${\rm dim}(\gamma_i^{\Lie}(\Lieg)) = n-i,~ 2 \leq i \leq n$.

\end{De}

$\Lie$-filiform Leibniz algebras are $\Lie$-nilpotent of class $n-1$.

\begin{Co} \label{5.10}
Let $\Lieg$ be an $n$-dimensional Leibniz algebra such that $Z^r(\Lieg) = Z_{\Lie}(\Lieg) \subseteq  Z^l(\Lieg)$ and it attains the upper bound of Corollary \ref{bounds}. If $\Lieg$ is $\Lie$-filiform, then $n=3$.
\end{Co}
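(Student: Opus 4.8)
The plan is to short-circuit the quantitative estimates of this section and observe that the single hypothesis $Z^{r}(\Lieg)=Z_{\Lie}(\Lieg)$ already forces $\gamma_{3}^{\Lie}(\Lieg)=0$, after which the $\Lie$-filiform numerology pins down $n$. The engine is a general fact, valid for \emph{every} Leibniz algebra: $\gamma_{2}^{\Lie}(\Lieg)\subseteq Z^{r}(\Lieg)$.

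To prove that inclusion, recall that $\gamma_{2}^{\Lie}(\Lieg)=[\Lieg,\Lieg]_{\Lie}$ is by definition the two-sided ideal generated by the elements $[x,y]_{lie}=[x,y]+[y,x]$, and that $Z^{r}(\Lieg)$ is itself a two-sided ideal of $\Lieg$ (Section \ref{preliminaries}). Hence it is enough to check that each generator $[x,y]_{lie}$ lies in $Z^{r}(\Lieg)$, i.e. that $[g,[x,y]_{lie}]=0$ for all $g\in\Lieg$; this is a one-line consequence of the Leibniz identity, since $[g,[x,y]]=[[g,x],y]-[[g,y],x]$ and $[g,[y,x]]=[[g,y],x]-[[g,x],y]$ sum to zero. (Equivalently, one may note that $\gamma_{2}^{\Lie}(\Lieg)$ coincides with the Leibniz kernel $\Lieg^{\rm ann}$, and $\Lieg^{\rm ann}\subseteq Z^{r}(\Lieg)$.)

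Now I would feed in the hypothesis: $Z^{r}(\Lieg)=Z_{\Lie}(\Lieg)$ gives $\gamma_{2}^{\Lie}(\Lieg)\subseteq Z_{\Lie}(\Lieg)$. Since the bracket $[-,-]_{lie}$ is symmetric and vanishes as soon as one of its arguments is $\Lie$-central, every generator $[m,g]_{lie}$ of $\gamma_{3}^{\Lie}(\Lieg)=[\gamma_{2}^{\Lie}(\Lieg),\Lieg]_{\Lie}$ (with $m\in\gamma_{2}^{\Lie}(\Lieg)\subseteq Z_{\Lie}(\Lieg)$) is zero, so $\gamma_{3}^{\Lie}(\Lieg)=0$. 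On the other hand $\Lieg$ is $\Lie$-filiform of dimension $n$, so $\dim\gamma_{i}^{\Lie}(\Lieg)=n-i$ for $2\le i\le n$; in particular $\dim\gamma_{3}^{\Lie}(\Lieg)=n-3$ (here $n\ge 3$, which is automatic once $\gamma_{2}^{\Lie}(\Lieg)=\Lieg^{\rm ann}\neq 0$, i.e. once $\Lieg$ is genuinely non-Lie). Comparing the two computations of $\gamma_{3}^{\Lie}(\Lieg)$ yields $n-3=0$, that is $n=3$.

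Notably this argument uses neither the attainment of the bound of Corollary \ref{bounds} nor $Z_{\Lie}(\Lieg)\subseteq Z^{l}(\Lieg)$; the substantive point is just the structural lemma $\gamma_{2}^{\Lie}(\Lieg)\subseteq Z^{r}(\Lieg)$, and the only thing to watch is that one argues on the \emph{generators} of the ideal $\gamma_{2}^{\Lie}(\Lieg)$ rather than on a spanning set, plus the harmless observation that the degenerate low-dimensional cases ($n\le 2$, where $\Lieg$ is an abelian Lie algebra) are tacitly excluded by the $\Lie$-filiform hypothesis. If one prefers to stay within the arithmetic of the section, one can instead start from the attainment relation $n-\dim Z_{\Lie}(\Lieg)=p\,(n-2)$, with $p\le 2$ the number of generators of $\Lieg/Z_{\Lie}(\Lieg)$, and run a short case analysis using $\dim Z_{\Lie}(\Lieg)\ge\dim\gamma_{n-1}^{\Lie}(\Lieg)=1$ together with $\Lieg^{\rm ann}\subseteq Z^{r}(\Lieg)=Z_{\Lie}(\Lieg)$; but the direct route above is shorter.
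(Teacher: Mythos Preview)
Your argument is correct and takes a genuinely different route from the paper's. The paper exploits the equality case of Corollary~\ref{bounds}: choosing $p=\dim\bigl(\Lieg/Z_{\Lie}(\Lieg)\bigr)$ and plugging in $\dim\gamma_2^{\Lie}(\Lieg)=n-2$, attainment of the bound reads $p=p(n-2)$, whence $n=3$. You instead bypass the quantitative bound entirely by proving the structural inclusion $\gamma_2^{\Lie}(\Lieg)\subseteq Z^r(\Lieg)$ (valid in any Leibniz algebra, by the Leibniz identity and the fact that $Z^r(\Lieg)$ is a two-sided ideal), then combine it with the hypothesis $Z^r(\Lieg)=Z_{\Lie}(\Lieg)$ to force $\gamma_3^{\Lie}(\Lieg)=0$, which the $\Lie$-filiform numerology $\dim\gamma_3^{\Lie}(\Lieg)=n-3$ turns into $n=3$.

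What your approach buys is a sharpening of the statement: neither the attainment of the bound in Corollary~\ref{bounds} nor the inclusion $Z_{\Lie}(\Lieg)\subseteq Z^l(\Lieg)$ is actually needed; the single assumption $Z^r(\Lieg)=Z_{\Lie}(\Lieg)$ already forces an $n$-dimensional $\Lie$-filiform Leibniz algebra to have $n=3$. The paper's route, by contrast, stays within the arithmetic framework of the section and makes the role of the bound explicit. One small caveat, shared by both proofs: the degenerate case $n=2$ (where the $\Lie$-filiform condition only says $\gamma_2^{\Lie}(\Lieg)=0$, so $\Lieg$ is an abelian Lie algebra and $p=0$) is not literally excluded by the definition as stated; your appeal to $\gamma_3^{\Lie}$ requires $n\ge 3$, just as the paper's division by $p$ requires $p\neq 0$. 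This is a harmless convention issue rather than a mathematical gap.
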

\begin{proof}
 If $\Lieg$ is $\Lie$-filiform, then ${\rm dim}(\gamma_2^{\Lie}(\Lieg)) = n-2,~n\geq 2.$  By the assumption on Corollary \ref{bounds},  $p={\rm dim} \left( \frac{\Lieg}{Z_{\Lie}(\Lieg)} \right) = p \cdot {\rm dim}(\gamma_2^{\Lie}(\Lieg))=p(n-2),$ which implies that $n=3$.
\end{proof}

\begin{Rem}
Example \ref{5.8} provides a {\Lie}-filiform Leibniz algebra which illustrates Corollary \ref{5.10}.
\end{Rem}

\begin{Pro}
Let $\Lieg$ be a Leibniz algebra. Then the following statements hold:
\begin{enumerate}
\item[(a)] Let $d \in {\sf Der}_c^{\Lie}(\Lieg)$ be. Then $d(\Lieg) \subseteq \gamma_2^{\Lie}(\Lieg),$ $d(Z_{\Lie}(\Lieg)) = 0$ and $d(\Lien) \subseteq \Lien$ for every two-sided ideal $\Lien$ of $\Lieg$.
\item[(b)] For $d \in \Der_{cz}^{\Lie}(\Lieg)$ there exists an $x \in Z^l(\Lieg)$ such that $d_{\mid \gamma_2^{\Lie}(\Lieg)} = R_{x \mid \gamma_2^{\Lie}(\Lieg)}$.
\item[(c)] If $\Lieg$ is 2-step $\Lie$-nilpotent, then $\Der_{cz}^{\Lie}(\Lieg) = {\sf Der}_c^{\Lie}(\Lieg)$.
\item[(d)] If $Z_{\Lie}(\Lieg) =0$, then $\Der_{cz}^{\Lie}(\Lieg) \subseteq {\sf R}(\Lieg)$ and ${\sf R}(Z^l(\Lieg)) \subseteq \Der_{cz}^{\Lie}(\Lieg)$.
\item[(e)] If $\Lieg$ is $\Lie$-nilpotent, then ${\sf Der}_c^{\Lie}(\Lieg)$ is $\Lie$-nilpotent and all $d \in {\sf Der}_c^{\Lie}(\Lieg)$ are nilpotent.
\item[(f)] ${\sf Der}_c^{\Lie}(\Lieg \oplus \Lieg') = {\sf Der}_c^{\Lie}(\Lieg) \oplus {\sf Der}_c^{\Lie}(\Lieg')$, for any Leibniz algebras $\Lieg$ and $\Lieg'$.
\end{enumerate}
\end{Pro}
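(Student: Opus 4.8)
The plan is to handle the six items almost independently, relying throughout on a handful of facts already available: the chain ${\sf Der}_c^{\Lie}(\Lieg) \subseteq {\sf ID}_*^{\Lie}(\Lieg) \subseteq {\sf ID}^{\Lie}(\Lieg)$; the fact, noted right after Definition \ref{Lie central der}, that every element of ${\sf Der}_z^{\Lie}(\Lieg)$ annihilates $\gamma_2^{\Lie}(\Lieg)$; the fact that every $R_x$ is an absolute derivation, hence a $\Lie$-derivation; and the observation that $[x,\Lieg]_{lie}$ is just the image of the linear map $g \mapsto [x,g]_{lie}$, so that $x \in Z_{\Lie}(\Lieg)$ forces $[x,\Lieg]_{lie}=0$ while $x$ lying in a two-sided ideal $\Lien$ forces $[x,\Lieg]_{lie}\subseteq\Lien$.

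With these, (a) is immediate: $d(x)\in[x,\Lieg]_{lie}\subseteq\gamma_2^{\Lie}(\Lieg)$, and the remaining two assertions follow from the two properties of $[x,\Lieg]_{lie}$ just noted. For (b) I would take $d \in {\sf Der}_{cz}^{\Lie}(\Lieg)$ with witness $x\in Z^l(\Lieg)$; then $d-R_x$ is a $\Lie$-derivation whose image lies in $Z_{\Lie}(\Lieg)$, hence $d-R_x\in{\sf Der}_z^{\Lie}(\Lieg)$ and so kills $\gamma_2^{\Lie}(\Lieg)$, which is exactly $d|_{\gamma_2^{\Lie}(\Lieg)}=R_x|_{\gamma_2^{\Lie}(\Lieg)}$. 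For (c) one containment is the definition of ${\sf Der}_{cz}^{\Lie}$; for the other, $2$-step $\Lie$-nilpotency yields $\gamma_2^{\Lie}(\Lieg)\subseteq Z_{\Lie}(\Lieg)$, so by (a) any $d\in{\sf Der}_c^{\Lie}(\Lieg)$ has $d(\Lieg)\subseteq Z_{\Lie}(\Lieg)$ and is witnessed by $x=0$. For (d), if $Z_{\Lie}(\Lieg)=0$ the witness relation for $d\in{\sf Der}_{cz}^{\Lie}(\Lieg)$ collapses to $d=R_x$, giving the first inclusion; conversely, for $x\in Z^l(\Lieg)$ one has $[y,x]=[y,x]_{lie}\in[y,\Lieg]_{lie}$ because $[x,y]=0$, so $R_x$ is almost inner and is its own witness, whence ${\sf R}(Z^l(\Lieg))\subseteq{\sf Der}_{cz}^{\Lie}(\Lieg)$.

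Item (e) carries the real content, and I expect the filtration bookkeeping there to be the main obstacle. The key lemma is that every $d\in{\sf Der}_c^{\Lie}(\Lieg)$ lowers the lower $\Lie$-central series by one step: for $x\in\gamma_i^{\Lie}(\Lieg)$ we get $d(x)\in[x,\Lieg]_{lie}\subseteq[\gamma_i^{\Lie}(\Lieg),\Lieg]_{\Lie}=\gamma_{i+1}^{\Lie}(\Lieg)$, so $d(\gamma_i^{\Lie}(\Lieg))\subseteq\gamma_{i+1}^{\Lie}(\Lieg)$. If $\Lieg$ is $\Lie$-nilpotent of class $c$, iterating gives $d^{\,c}=0$ on $\Lieg$, so every such $d$ is a nilpotent map. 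For nilpotency of the Lie algebra ${\sf Der}_c^{\Lie}(\Lieg)$ I would prove by induction on $k$ that each element of the $k$-th term of its ordinary lower central series sends $\gamma_i^{\Lie}(\Lieg)$ into $\gamma_{i+k}^{\Lie}(\Lieg)$; the delicate point is that ${\sf Der}_c^{\Lie}(\Lieg)$ need not be closed under composition, so the induction must be run through the bracket, using that $[u,v]=u\circ v-v\circ u$ raises the filtration index by at least the sum of the two increments. Setting $i=1$, $k=c$ gives $\gamma_c({\sf Der}_c^{\Lie}(\Lieg))=0$, so ${\sf Der}_c^{\Lie}(\Lieg)$ is nilpotent, in particular $\Lie$-nilpotent.

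Finally, for (f), write $\Lieh=\Lieg\oplus\Lieg'$ with vanishing cross-brackets. The inclusion $\supseteq$ is routine: a sum $d_1\oplus d_2$ of almost inner $\Lie$-derivations is a $\Lie$-derivation and $(d_1\oplus d_2)(x,x')=(d_1(x),d_2(x'))\in([x,\Lieg]_{lie},[x',\Lieg']_{lie})=[(x,x'),\Lieh]_{lie}$. For $\subseteq$, the identity $[(x,0),(g,g')]_{lie}=([x,g]_{lie},0)$ forces $d(x,0)\in\Lieg\oplus 0$ and symmetrically $d(0,x')\in 0\oplus\Lieg'$ for $d\in{\sf Der}_c^{\Lie}(\Lieh)$, so $d=d_1\oplus d_2$; restricting the $\Lie$-derivation identity to each summand, which is legitimate because the cross-brackets vanish, shows $d_1\in{\sf Der}_c^{\Lie}(\Lieg)$ and $d_2\in{\sf Der}_c^{\Lie}(\Lieg')$, and directness of the sum together with compatibility with the Lie bracket are immediate since the two pieces act on complementary two-sided ideals.
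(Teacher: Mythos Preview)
Your proposal is correct and follows essentially the same approach as the paper's proof; each item is handled by the same mechanism (the almost-inner condition $d(x)\in[x,\Lieg]_{lie}$ and the filtration-lowering property it entails). Your treatment is in fact more detailed in places: in (c) you use the witness $x=0$ rather than an arbitrary $x\in Z^l(\Lieg)$, in (d) you actually supply the verification the paper leaves as ``easily checked'', and in (e) you spell out the inductive filtration argument the paper calls ``routine'' and correctly note that nilpotency of the Lie algebra ${\sf Der}_c^{\Lie}(\Lieg)$ automatically gives $\Lie$-nilpotency.
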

\begin{proof}
{\it (a)} For any $x \in \Lieg$, $d(x) \in [x, \Lieg]_{\Lie} \subseteq [\Lieg, \Lieg]_{\Lie}$; if $x \in Z_{\Lie}(\Lieg)$, then $d(x) = [x, y]_{lie} = 0$, for all $y \in \Lieg$;   $d(\Lien) \subseteq [\Lien, \Lieg]_{\Lie} \subseteq \Lien$.

{\it (b)} Let $d \in \Der_{cz}^{\Lie}(\Lieg)$, then there exists $x \in Z^l(\Lieg)$ such that $(d-R_x)(\Lieg) \subseteq Z_{\Lie}(\Lieg)$. Since $d-R_x$ is a {\Lie}-derivation we have
$$(d-R_x)([y,z]_{lie}) = [(d-R_x)(y),z]_{lie}+ [y, (d-R_x)(z)]_{lie} = 0$$
and thus $d([y, z]_{lie}) = R_x([y,z]_{lie})$, for all $y, z \in \Lieg$. Hence $d_{\mid \gamma_2^{\Lie}(\Lieg)}=  {R_x}_{\mid \gamma_2^{\Lie}(\Lieg)}$.

{\it (c)}  Notice that if $\Lieg$ is 2-step $\Lie$-nilpotent, then $ \gamma_2^{\Lie}(\Lieg)\subseteq Z_{\Lie}(\Lieg).$ So for all $d \in \Der_{c}^{\Lie}(\Lieg),$  any $x \in Z^l(\Lieg)$ and  $y\in\Lieg,$ we have
$d(y)\in [y,\Lieg]_{\Lie}\subseteq\gamma_2^{\Lie}(\Lieg)\subseteq Z_{\Lie}(\Lieg)$ and $R_x(y)=[y,x]=[y,x]_{lie}\in  \gamma_2^{\Lie}(\Lieg)\subseteq Z_{\Lie}(\Lieg)$.  Therefore $(d-R_x)(\Lieg) \subseteq Z_{\Lie}(\Lieg),$ and thus $d \in \Der_{cz}^{\Lie}(\Lieg).$

{\it (d)} Assume that $Z_{\Lie}(\Lieg) =0.$ Then   for all $d \in \Der_{cz}^{\Lie}(\Lieg),$ there exists an $x \in Z^l(\Lieg)$ such that $(d-R_x)(\Lieg) =0,$ i.e. $d=R_x\in {\sf R}(\Lieg).$ So $\Der_{cz}^{\Lie}(\Lieg) \subseteq{\sf R}(\Lieg).$  The second inclusion can be easily checked.

{\it (e)}  If $\Lieg$ is \Lie-nilpotent of class $c$, then $\gamma_{c+1}^{\Lie}(\Lieg)=0.$ So for any $d\in  {\sf Der}_c^{\Lie}(\Lieg),$ $d(x)\in[x,\Lieg]_{\Lie}\subseteq \gamma_{2}^{\Lie}(\Lieg).$ One inductively proves that $d^c(x) \subseteq \gamma_{c+1}^{\Lie}(\Lieg)$,  $d^{c}(x)=d(d^{c-1}(x))\in[d^{c-1}(x),\Lieg]_{\Lie}\subseteq \gamma_{c+1}^{\Lie}(\Lieg)=0.$ So $d$ is nilpotent.

Also, a routine inductive argument shows that $\gamma_{c+1}^{\Lie}({\sf Der}_c^{\Lie}(\Lieg))(\Lieg)\subseteq \gamma_{c+1}^{\Lie}(\Lieg)=0.$ So $\gamma_{c+1}^{\Lie}({\sf Der}_c^{\Lie}(\Lieg))=0$ and thus ${\sf Der}_c^{\Lie}(\Lieg)$ is $\Lie$-nilpotent.

{\it (f)} For  any $d\in {\sf Der}_c^{\Lie}(\Lieg \oplus \Lieg'),$ it is clear that $d_{\mid\Lieg} \in {\sf Der}_c^{\Lie}(\Lieg) $ and $d_{\mid\Lieg'} \in {\sf Der}_c^{\Lie}(\Lieg').$ Conversely, for $d\in{\sf Der}_c^{\Lie}(\Lieg) $ and  $d'\in{\sf Der}_c^{\Lie}(\Lieg'),$ one easily shows that the mapping  $d'':\Lieg \oplus \Lieg'\to\Lieg \oplus \Lieg'$ defined by $d''(x, x'):= (d(x), d'(x'))$ is a {\Lie}-derivation
such that for $x, x'\in \Lieg,\Lieg'$ we have $d''(x, x')= (d(x), d'(x'))\in ([x,\Lieg]_{\Lie}, [x',\Lieg']_{\Lie})=[(x, x'),\Lieg \oplus \Lieg']_{\Lie}$ by definition of the bracket of $\Lieg \oplus \Lieg'.$
\end{proof}


\section*{Acknowledgements}

Second author was supported by  Agencia Estatal de Investigación (Spain), grant MTM2016-79661-P (AEI/FEDER, UE, support included).


\end{document}